\newcounter{thm}
\newtheorem{remark}[thm]{Remark}
\newtheorem{proposition}[thm]{Proposition}
\newtheorem{definition}[thm]{Definition}
\newtheorem{theorem}[thm]{Theorem}
\newtheorem{example}[thm]{Example}
\newtheorem{lemma}[thm]{Lemma}
\newtheorem{corollary}[thm]{Corollary}
\newcommand{\Ag}{\mathcal{A}_{\gamma',\gamma}}
\newcommand{\At}{\mathcal{A}_{\tau',\tau}}
\newcommand{\NN}{\mathbb{N}}
\newcommand{\ZZ}{\mathbb{Z}}
\newcommand{\kk}{\mathds{k}}
\numberwithin{equation}{section}
\numberwithin{thm}{section}
\numberwithin{figure}{section}
\title{Birational Invariance in Punctured Log Gromov-Witten Theory}
\author{Samuel Johnston}
\email{samuel.johnston@imperial.ac.uk}
\address{Imperial College London, South Kensington Campus, London SW7 2AZ, UK}
\date{}		
\subjclass[2020]{14J33,14N35,14N10}						
\keywords{Logarithmic Gromov-Witten theory, log \'etale modification, mirror symmetry}
\begin{document}

\begin{abstract}
Given a log smooth scheme $(X,D)$, and a log \'etale modification $(\tilde{X},\tilde{D}) \rightarrow (X,D)$, we relate the punctured Gromov-Witten theory of $(\tilde{X},\tilde{D})$ to the punctured Gromov-Witten theory of $(X,D)$, generalizing results of Abramovich and Wise in the non-punctured setting in \cite{bir_GW}. Using the main comparison results, we show a form of log \'etale invariance for the logarithmic mirror algebras and canonical wall structures constructed in \cite{int_mirror} and \cite{scatt} respectively. 
\end{abstract}

\maketitle 

\tableofcontents
\section{Introduction}

A central construction in logarithmic geometry is a log \'etale modification. Generalizing toric blow-ups, these are basic surgeries which might greatly alter the underlying scheme, but respect much of the underlying logarithmic geometry. An example of this phenomenon in the context of log Gromov-Witten theory is the result of Abramovich and Wise on the invariance of log Gromov-Witten invariants under log \'etale modifications \cite{bir_GW}. Roughly speaking, this result states that given a log \'etale modification of fine and saturated log smooth schemes $\tilde{X} \rightarrow X$, the integrals on moduli spaces associated with $X$ computing log Gromov-Witten invariants of $X$ are equal to appropriate integrals on moduli spaces associated with $\tilde{X}$. 

Since this work, there have been numerous developments in the field of log Gromov-Witten theory. First, it has become clear that studying the associated tropical geometry of the tropicalization of a target $X$ can yield useful coarse methods of working with $X$, which has allowed for deeper understanding of decomposition results, see \cite{expand} for an example. On another front, the more flexible construction of punctured log curves has been developed, which allows for negative contact orders of marked points with boundary divisors. This is useful for cutting and gluing operations, and also shares a rich relationship with tropical geometry. 

With these developments in the field, we seek to generalize the result of Abramovich and Wise to the setting of punctured log Gromov-Witten theory. Let $X \rightarrow B$ be a projective log smooth morphism of fine and saturated log schemes, with $B$ either log smooth or a geometric point with rank $1$ log structure, and a realizable tropical type $\tau$ of punctured log curve to $X$ in the sense of \cite{punc}. Consider a log \'etale modification $\tilde{X} \rightarrow X$ which is derived from a subdivision of the corresponding fans $\Sigma(\tilde{X})\rightarrow \Sigma(X)$, or equivalently a log \'etale modification of corresponding Artin fans $\tilde{\mathcal{X}} \rightarrow \mathcal{X}$.

To compare punctured log maps with target $\tilde{X}$ to punctured log maps with target $X$, we fix discrete data in the form of realizable tropical types $\gamma$ to $\Sigma(\tilde{X})$ and $\tau$ to $\Sigma(X)$ such that $\gamma$ is a tropical lift of $\tau$ in the sense of Definition \ref{dtlift}. The data of the tropical types $\tau$ and $\gamma$ give a pair of cones, which we denote by the same name, and allow for the definition of the moduli stacks $\mathscr{M}(X,\tau)$ and $\mathscr{M}(\tilde{X},\gamma)$ respectively. The condition of $\gamma$ being a tropical lift of $\tau$ is a necessary condition for stabilization to induce a morphism $\mathscr{M}(\tilde{X},\gamma) \rightarrow \mathscr{M}(X,\tau)$, and gives an injective morphism of cones $i:\gamma \rightarrow \tau$. Both moduli spaces are virtually equidimensional, but the virtual dimensions of $\mathscr{M}(\tilde{X}/B,\gamma)$ will typically be greater than the virtual dimension of $\mathscr{M}(X/B,\tau)$, meaning we cannot expect a simple generalization of the main result of \cite{bir_GW}. Without making further assumptions, we establish the following relation:

\begin{theorem}\label{mthm1}
Assume $\tau$ and $\gamma$ are a pair of realizable tropical types on $X$ and $\tilde{X}$, with $\gamma$ a lift of $\tau$, such that stabilization induces a morphism $s: \mathscr{M}(\tilde{X}/B,\gamma) \rightarrow \mathscr{M}(X/B,\tau)$. Then there exists a log algebraic stack $\mathfrak{M}_{\gamma \rightarrow \tau}$, an \'etale morphism $\mathfrak{M}_{\gamma \rightarrow \tau} \rightarrow \mathfrak{M}(\widetilde{\mathcal{X}}/B,\gamma)$ and a cartesian square of algebraic stacks and fine and saturated log algebraic stacks:

\[\begin{tikzcd}
\mathscr{M}(\tilde{X}/B,\gamma) \arrow{r} \arrow{d} & \mathfrak{M}_{\gamma \rightarrow \tau} \arrow{d} \\
\mathscr{M}(X/B,\tau) \arrow {r} & \mathfrak{M}(\mathcal{X}/B,\tau)
\end{tikzcd}
\]
with $\mathfrak{M}_{\gamma \rightarrow \tau} \rightarrow \mathfrak{M}(\mathcal{X}/B,\tau)$ idealized log \'etale, DM type and proper. Furthermore, the obstruction theory for the bottom horizontal morphism pulls back to the obstruction theory for $\mathscr{M}(\tilde{X}/B,\gamma) \rightarrow \mathfrak{M}(\tilde{\mathcal{X}}/B,\gamma)$. 
\end{theorem}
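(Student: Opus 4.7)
The plan is to construct $\mathfrak{M}_{\gamma \rightarrow \tau}$ as the moduli stack parameterizing families of pre-stable punctured log curves $C^\circ / W$ equipped with a map of type $\tau$ to the Artin fan $\mathcal{X}$ together with a compatible lift to a map of type $\gamma$ to the Artin fan $\widetilde{\mathcal{X}}$. Because the subdivision $\Sigma(\tilde{X}) \to \Sigma(X)$ induces a log \'etale morphism $\widetilde{\mathcal{X}} \to \mathcal{X}$, such a lift is tropically rigid: once the tropical type $\gamma$ is fixed, giving a lift amounts to subdivision data on the universal tropical family, which by Definition \ref{dtlift} is controlled by the fiber product description $\overline{\Gamma}^\circ_\gamma \times_{\Sigma(X)} \Sigma(\tilde{X})$. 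Forgetting the map to $\mathcal{X}$ then defines the claimed \'etale morphism $\mathfrak{M}_{\gamma \rightarrow \tau} \to \mathfrak{M}(\widetilde{\mathcal{X}}/B, \gamma)$, since the $\mathcal{X}$-datum is recovered by composition with $\widetilde{\mathcal{X}} \to \mathcal{X}$ and the assumption on stabilization fixes the type, while remembering only the $\mathcal{X}$-data yields the map $\mathfrak{M}_{\gamma \rightarrow \tau} \to \mathfrak{M}(\mathcal{X}/B,\tau)$.

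The cartesian property then follows from a direct universal property check. An object of $\mathscr{M}(X/B,\tau) \times_{\mathfrak{M}(\mathcal{X}/B,\tau)} \mathfrak{M}_{\gamma \rightarrow \tau}$ consists of a punctured log map $f : C^\circ \to X$ of type $\tau$ together with a tropical lift of type $\gamma$ to $\widetilde{\mathcal{X}}$. Because $\tilde{X} \to X$ is log \'etale, a strict lift of the classifying map to $\mathcal{X}$ promotes to an actual lift $\tilde{f}: C^\circ \to \tilde{X}$, and this data is exactly an object of $\mathscr{M}(\tilde{X}/B,\gamma)$.

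The remaining properties of $\mathfrak{M}_{\gamma \rightarrow \tau} \to \mathfrak{M}(\mathcal{X}/B,\tau)$ can then be verified at the tropical level. Idealized log \'etaleness reduces to that of the cone inclusion $\gamma \to \tau$, which holds because the subdivision is a piecewise-linear isomorphism and because $int(\gamma) \subset int(\tau)$ pins down the idealized structure. The DM type follows because log \'etale morphisms trivialize automorphisms on characteristic monoids. Properness follows from the valuative criterion: given a punctured log map of type $\tau$ over the generic point of a DVR with a chosen $\gamma$-lift, the completeness of the subdivision $\Sigma(\tilde{X}) \to \Sigma(X)$ ensures the lift extends uniquely across the closed point. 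For the compatibility of obstruction theories, I would invoke vanishing of the relative log cotangent complex for $\widetilde{\mathcal{X}} \to \mathcal{X}$: the perfect obstruction theory $R\pi_* f^* \Omega^{\log}_{X/B}$ on $\mathscr{M}(X/B,\tau)$ relative to $\mathfrak{M}(\mathcal{X}/B,\tau)$ pulls back along the cartesian square to $R\pi_* \tilde{f}^* \Omega^{\log}_{\tilde{X}/B}$, which is the analogous obstruction theory for $\mathscr{M}(\tilde{X}/B,\gamma)$ over $\mathfrak{M}_{\gamma \rightarrow \tau}$.

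The principal obstacle will be the careful handling of the puncturing data. In the non-punctured Abramovich-Wise setting, curves carry only divisorial log structure, and the descent of log maps under subdivisions is governed purely by ghost sheaves. In the punctured setting, punctured sections carry an additional idealized log structure allowing negative contact orders, and one must verify that the puncturing on the tropical lift $\gamma$ pulls back compatibly from that on $\tau$ in a stack-theoretic sense (the compatibility implicit in the last sentence of Definition \ref{dtlift}). Establishing that the idealized structures behave well under subdivision, so that idealized log \'etaleness and properness transfer cleanly from the tropical level to the stack-theoretic level, is where I expect most of the technical effort to concentrate, especially when punctured sections carry contact orders that change sign under the subdivision.
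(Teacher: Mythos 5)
Your proposal misses the central technical difficulty of the construction: when $\gamma$ is a tropical lift of $\tau$, the dual graph $G_\gamma$ is a genuine subdivision of $G_\tau$ (new vertices subdivide edges and legs), so a punctured log curve $C^\circ$ marked by $\tau$ does \emph{not} have the same underlying scheme as a curve marked by $\gamma$ — the $\gamma$-curve has extra rational components. Your cartesian check claims that ``a strict lift of the classifying map to $\mathcal{X}$ promotes to an actual lift $\tilde{f}: C^\circ \to \tilde{X}$, and this data is exactly an object of $\mathscr{M}(\tilde{X}/B,\gamma)$,'' but a map from $C^\circ$ (of type $\tau$) to $\tilde X$ cannot have type $\gamma$, and indeed need not even exist: the tropicalization $\Sigma(C^\circ) \to \Sigma(\mathcal{X})$ generally does not factor through $\Sigma(\widetilde{\mathcal{X}})$ without first subdividing $\Sigma(C^\circ)$, which means a log modification of the curve itself, inserting the components needed to make the map stable. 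Similarly, your description of $\mathfrak{M}_{\gamma\to\tau}\to\mathfrak{M}(\widetilde{\mathcal X}/B,\gamma)$ as a forgetful morphism is not right: in the paper's construction $\mathfrak{M}_{\gamma\to\tau}$ is a fiber product $\mathfrak{M}_\tau\times_{\mathcal A_{\mathfrak M_\tau}}\mathcal A_{\gamma\to\tau}$, and its objects carry a map $\overline C^\circ\to\mathcal X$ of type $\tau$ together with a tropical lift condition but \emph{no} map to $\widetilde{\mathcal X}$; producing the map to $\mathfrak{M}_\gamma$ requires an explicit construction that (i) enlarges the puncturing monoid at punctured sections using the tropical data, (ii) performs a log \'etale modification $\tilde C\to\overline C$ pulled back from the subdivision of the tropicalization of the curve, and (iii) re-punctures via a fine fiber product $\tilde C^\circ=\overline C^\circ\times^{\mathrm{fine}}_{\overline C}\tilde C$. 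Showing this is well-defined — in particular that the fine fiber product has the expected underlying scheme and that the resulting monoids at nodes and punctures are the correct ones — is where the bulk of the proof's effort goes, and your sketch does not touch it.

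The obstruction-theory comparison is also not quite as you state it: because the universal curves over $\mathscr{M}(\tilde X/B,\gamma)$ and $\mathscr{M}(X/B,\tau)$ are related by a contraction $c:\mathfrak{C}\to\overline{\mathfrak{C}}$ of rational components, one must identify $Rc_*(f^*T^{\log}_{\tilde X})$ with $\overline f^*T^{\log}_X$ (using $c^*\overline f^*T^{\log}_X\cong f^*T^{\log}_{\tilde X}$ from log \'etaleness of $\tilde X\to X$ and $Rc_*\mathcal O_{\mathfrak C}\cong\mathcal O_{\overline{\mathfrak C}}$), rather than simply pulling the complex back along the square. Your intuition about properness, DM type, and idealized log \'etaleness being checkable at the level of Artin fans / idealized Artin cones is sound, and the paper does exactly that, but without the curve modification step the cartesian square and the \'etale arrow you invoke do not exist.
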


Roughly speaking, Theorem \ref{mthm1} says that the geometry of $\mathscr{M}(\tilde{X}/B,\gamma)$ is described by the geometry of $\mathscr{M}(X/B,\tau)$ and the tropical moduli space of tropical curves marked by $\gamma$.

When the dimensions of the tropical moduli spaces $\tau$ and $\gamma$ coincide, there is no longer a problem with jumping virtual dimension, meaning we can compare virtual classes in this setting, as is done in the following theorem:

\begin{theorem}\label{mthm2}
Suppose $\gamma$ is a lift of $\tau$, and dim $\tau =$ dim $\gamma$. Let $m = |coker(\gamma^{gp}_{\NN} \rightarrow \tau^{gp}_{\NN})|$. Then for $s: \mathscr{M}(\tilde{X},\gamma) \rightarrow \mathscr{M}(X,\tau)$ the map induced by $\pi: \tilde{X} \rightarrow X$: 

\[s_*[\mathscr{M}(\tilde{X}/B,\gamma)]^{vir} = \frac{1}{m}[\mathscr{M}(X/B,\tau)]^{vir} .\] 
\end{theorem}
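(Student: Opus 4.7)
The plan is to use Theorem~\ref{mthm1} to reduce the identity of virtual classes to a purely stack-theoretic degree computation for the right vertical map $p\colon\mathfrak{M}_{\gamma\to\tau}\to\mathfrak{M}(\mathcal{X}/B,\tau)$, and then read off that degree from the Artin-fan local model.

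\medskip

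\emph{Step 1: Reduction via virtual pullback.} Denote by $E$ the perfect obstruction theory of $q\colon\mathscr{M}(X/B,\tau)\to\mathfrak{M}(\mathcal{X}/B,\tau)$ constructed in \cite{punc}, so $[\mathscr{M}(X/B,\tau)]^{vir}=q^!_E[\mathfrak{M}(\mathcal{X}/B,\tau)]$. By the compatibility of obstruction theories asserted in Theorem~\ref{mthm1}, the obstruction theory of $\tilde q\colon\mathscr{M}(\tilde X/B,\gamma)\to\mathfrak{M}_{\gamma\to\tau}$ coincides with $p^\ast E$, so $[\mathscr{M}(\tilde X/B,\gamma)]^{vir}=\tilde q^!_{p^\ast E}[\mathfrak{M}_{\gamma\to\tau}]$. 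Since $p$ is proper, the base-change formula for virtual pullbacks in \cite{vpull} applied to the cartesian square of Theorem~\ref{mthm1} yields $s_\ast\,\tilde q^!_{p^\ast E}=q^!_E\,p_\ast$. Hence the theorem is reduced to the identity
\[
p_\ast[\mathfrak{M}_{\gamma\to\tau}]=\tfrac{1}{m}\,[\mathfrak{M}(\mathcal{X}/B,\tau)].
\]

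\medskip

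\emph{Step 2: Extracting the factor $1/m$.} Under the hypothesis $\dim\gamma=\dim\tau$, the idealized log étale, DM-type, proper morphism $p$ has relative dimension zero and is therefore finite. Étale locally on a smooth atlas, $p$ is modeled by a morphism of toric Artin stacks built from the cone inclusion $\gamma\hookrightarrow\tau$, whose lattice map $\gamma^{gp}\hookrightarrow\tau^{gp}$ has finite cokernel of order $m$. Dualising produces a surjection of tori $T_{\tau^{gp}}\twoheadrightarrow T_{\gamma^{gp}}$ with kernel of order $m$, so that the local model exhibits the source as a gerbe of band of order $m$ over (a refinement of) the target; pushforward of the fundamental class along such a map contributes exactly the factor $1/m$. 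Combining this with Step~1 gives
\[
s_\ast[\mathscr{M}(\tilde X/B,\gamma)]^{vir}=q^!_E\bigl(\tfrac{1}{m}[\mathfrak{M}(\mathcal{X}/B,\tau)]\bigr)=\tfrac{1}{m}[\mathscr{M}(X/B,\tau)]^{vir},
\]
as claimed.

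\medskip

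The main obstacle will lie in Step~2: one must unpack the construction of $\mathfrak{M}_{\gamma\to\tau}$ from Theorem~\ref{mthm1} carefully enough to exhibit its local toric/Artin-fan description, and verify that the band of the resulting gerbe really has order $m=|\text{coker}(\gamma^{gp}\to\tau^{gp})|$. A related technicality is ruling out spurious contributions from lower-dimensional strata of $\mathfrak{M}_{\gamma\to\tau}$: once one checks that any locus not covered by the equidimensional gerbe picture has strictly smaller dimension, it is killed by $p_\ast$ applied to the fundamental class. Once the local model and this codimension bound are secured, Step~1 and the standard toric bookkeeping of Step~2 assemble into the stated identity.
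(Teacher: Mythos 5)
Your proposal follows essentially the same route as the paper: reduce to the degree of $p\colon\mathfrak{M}_{\gamma\to\tau}\to\mathfrak{M}_\tau$ via the cartesian square of Theorem~\ref{mthm1}, Costello-type pushforward, and compatible obstruction theories, then compute that degree as $1/m$ from the local Artin-cone model $\mathcal{A}_{\gamma,\gamma}\to\mathcal{A}_{\tau,\tau}$ (this is precisely Lemma~\ref{DM} in the text, where the degree is identified with $\deg[T_{\tau}/T_{\gamma}]=1/|\mathrm{coker}(i)_{\mathrm{tor}}|$). The phrasing in your Step 2 about the source being a ``gerbe over a refinement of the target'' is a slight imprecision—both idealized Artin cones are $B\mathbb{G}_m^n$ and the degree comes from the stabilizer cokernel, not from a gerbe structure on the morphism—but the conclusion and underlying computation coincide with the paper's.
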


\begin{example}
Consider a log smooth scheme $X$, i.e, $B = Spec$ $\kk$ with the trivial log structure and $X \rightarrow B$ is log smooth. Consider additionally a tropical type $\tau$ which has $1$ vertex, and $k$ legs with contact orders $a_i$ in $\Sigma(X)$, such that the unique vertex maps to the zero cone of $\Sigma(X)$. Then $\mathscr{M}(X,\tau)$ is the ordinary moduli space of log stable maps with contact order given by the vector $(a_i)$. The tropical moduli space of such a tropical type consists of a single point, and any lift of the $\tau$ to a family of tropical curves mapping to $\tilde{X}$ must also have trivial tropical moduli. Hence, the lattice index is $1$, and we recover the fact that the pushforward of $[\mathscr{M}(\tilde{X},\gamma)]^{vir}$ is $[\mathscr{M}(X,\tau)]^{vir}$.

\end{example}

We use the above theorems to weaken necessary assumptions for the foundations of punctured log Gromov Witten theory, as well as give an application to the construction of mirrors.

First, in the paper \cite{punc} which constructed punctured log GW invariants, the moduli stacks $\mathscr{M}(X,\tau)$ were only shown to be finite type if a global generation assumption related to the log structure of $X$ was assumed. In the theory of ordinary stable log maps, this assumption was weakened in \cite{bound} by noting that the assumption held under suitable log modifications, and then using the ordinary birational invariance result. Using Theorem \ref{mthm1}, we produce as a corollary the analogous result in the more general setting of punctured log maps. 

Secondly, punctured invariants play a critical role in the mirror construction of Gross and Siebert. This construction takes as input a log Calabi-Yau pair $(X,D)$, and outputs an algebra over a monoid ring of effective curve classes in $X$ which is a candidate for the ring of functions on the mirror to $(X,D)$. A log \'etale modification $\tilde{X} \rightarrow X$ is another log Calabi-Yau compactificaton of $U:= X\setminus D$, and in particular give a pair of distinct mirror algebras $R_{(X,D)}$ and $R_{(\tilde{X},\tilde{D})}$. After applying the pushforward morphism $H_2(\tilde{X}) \rightarrow H_2(X)$ to all monomials in the mirror algebra, we wish to identify the mirror algebras produced by the two compactifications of $U$. The following corollary fulfills this wish:

\begin{corollary}\label{mcr1}
For $A_X = \kk[Q]/I$ the ring of functions on the base of the mirror family for $(X,D)$, with $I \subset\kk[Q]$ a monomial ideal whose radical is the maximal monomial ideal, let $Spec\text{ }R_{(X,D)} \rightarrow$ $Spec\text{ }A_{X}$ be the mirror family produced by the main construction of \cite{int_mirror}. Additionally, let $\pi_*: Q' \rightarrow Q$ be a morphism of monoids with $Q'$ finitely generated, $NE(\tilde{X}) \subset Q' \subset H_2(\tilde{X})$, and let $I' = \pi^{-1}_*(I)$ and $A_{\tilde{X}} = \kk[Q']/I'$. Then the mirror algebra $R_{(\tilde{X},\tilde{D})}$ with coefficients in $A_{\tilde{X}}$ is well-defined, and 
\[Spec\text{ }R_{(X,D)} = Spec\text{ }A_{X} \times_{Spec\text{ }A_{\tilde{X}}}Spec\text{ }R_{(\tilde{X},\tilde{D})},\]
with the morphism $Spec\text{ }A_X \rightarrow Spec\text{ }A_{\tilde{X}}$ induced by the spectra of monoid rings induced by the pushforward morphism $\pi_*:Q' \rightarrow Q$. 
\end{corollary}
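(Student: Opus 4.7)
Recall from \cite{int_mirror} that $R_{(X,D)}$ is a free $A_X$-module on theta functions $\{\vartheta_p\}$ with structure constants
\[
\vartheta_p\cdot\vartheta_q=\sum_r\Big(\sum_{\beta\in Q\setminus I}N^\beta_{p,q,-r}z^\beta\Big)\vartheta_r,
\]
where each $N^\beta_{p,q,-r}$ is the degree of a virtual fundamental class on a moduli space of three-pointed punctured log stable maps of class $\beta$ with contact data $(p,q,-r)$ and a point insertion; the analogous formula with invariants $\tilde N^{\tilde\beta}_{p,q,-r}$ governs $R_{(\tilde X,\tilde D)}$. Since $\tilde X\to X$ is an isomorphism over $U=X\setminus D$, the integral tropical points indexing theta functions for the two pairs coincide, so the two algebras have matching generating sets once coefficients are matched. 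Unwinding the fiber product at the level of coefficient rings and using that $A_{\tilde X}\to A_X$ sends $z^{\tilde\beta}\mapsto z^{\pi_*\tilde\beta}$, the corollary reduces to two claims: (i) $R_{(\tilde X,\tilde D)}$ is well-defined as an $A_{\tilde X}$-algebra, and (ii) the numerical identity
\[
N^\beta_{p,q,-r}=\sum_{\tilde\beta\in\pi^{-1}_*(\beta)}\tilde N^{\tilde\beta}_{p,q,-r}\qquad\text{for every }\beta\in Q\setminus I\text{ and }p,q,r.
\]

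For (ii), I would decompose both moduli spaces by decorated tropical type. Each decorated lift $\pmb\gamma$ pushes forward under $\pi_*$ to a unique decorated type $\pmb\tau$ on $X$, so the right-hand sum reorganizes as a sum over $\pmb\tau$ with class $\beta$ and contact data $(p,q,-r)$, followed by a sum over lifts $\pmb\gamma$ of $\pmb\tau$. For each such $\pmb\tau$ and each lift $\pmb\gamma$ with $\dim\gamma=\dim\tau$, Theorem \ref{decthm} together with the pushforward formula of Theorem \ref{mthm2} yields
\[
s_*[\mathscr{M}(\tilde X/B,\pmb\gamma)]^{vir}=\tfrac{1}{m_\gamma}[\mathscr{M}_{\pmb\tau}]^{vir},
\]
where $\mathscr{M}_{\pmb\tau}\subset\mathscr{M}(X/B,\pmb\tau)$ is the union of components in the image of $s$ (which depends on $\pmb\gamma$). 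Lifts with $\dim\gamma<\dim\tau$ push forward to lower-dimensional cycles annihilated by the codimension-$\dim\tau$ point insertion. Because the evaluation map for the point constraint factors through $U$, where $\tilde X\to X$ is an isomorphism, $\mathrm{ev}^*_{\tilde X}=s^*\mathrm{ev}^*_X$ and the projection formula commutes $s_*$ past the point class. It then remains to verify the combinatorial identity that the components $\mathscr{M}_{\pmb\tau}$ arising from the various $\dim$-matching lifts of $\pmb\tau$ cover $\mathscr{M}(X/B,\pmb\tau)$ with weights $m_\gamma^{-1}$ summing correctly on each component; taking degrees and reorganizing by $\beta$ then delivers (ii).

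For (i), the finiteness of structure constants modulo $I'$ follows from the corresponding finiteness for $R_{(X,D)}$ established in \cite{int_mirror}: since $I'=\pi^{-1}_*(I)$, any $\tilde\beta\notin I'$ satisfies $\pi_*(\tilde\beta)\notin I$, so the relevant $\tilde\beta$ lie in $\pi^{-1}_*$ of the finite set of $\beta$ contributing to $R_{(X,D)}$. Each such fiber carries only finitely many classes supporting nonzero invariants, by Theorem \ref{mthm1}: it realizes the lifts of each $\pmb\tau$ as strata of the proper DM-type stack $\mathfrak{M}_{\gamma\to\tau}$ over $\mathfrak{M}(\mathcal X/B,\pmb\tau)$, so only finitely many lifts $\pmb\gamma$ produce nonempty moduli spaces.

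The chief obstacle is the combinatorial identity described in paragraph two, namely that the images $\mathscr{M}_{\pmb\tau}$ of the lifts of $\pmb\tau$ partition $\mathscr{M}(X/B,\pmb\tau)$ with lattice multiplicities $m_\gamma^{-1}$ summing correctly. This should reduce, via the étale cover $\mathfrak{M}_{\gamma\to\tau}\to\mathfrak{M}(\mathcal X/B,\pmb\tau)$ of Theorem \ref{mthm1}, to the purely combinatorial statement that the subdivision $\Sigma(\tilde X)\to\Sigma(X)$ decomposes the tropical moduli cone of $\pmb\tau$ into lattice-indexed pieces indexed by the lifts $\pmb\gamma$ with $\dim\gamma=\dim\tau$.
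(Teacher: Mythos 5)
Your reduction of the corollary to the structure--constant identity $N^A_{p,q,r}=\sum_{\pi_*(A')=A}N^{A'}_{p,q,r}$, organized stratum-by-stratum over decorated tropical types, is exactly the paper's strategy, and your argument for well-definedness of $R_{(\tilde X,\tilde D)}$ over $A_{\tilde X}$ is in the right spirit. The gap is in how you handle the point insertion. The constraint is not capping with an ordinary cohomology class pulled back from $U$: the output evaluation is at a punctured point of contact order $-r$, the constraining point $z$ lies in the open stratum $Z_r^\circ$ of the \emph{boundary} stratum $Z_r$ (so not in $U$ unless $r=0$), and the constraint is imposed as a fine saturated log fiber product against the logarithmic evaluation stack $\mathscr{P}(X,r)$, whose log structure is nontrivial. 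Consequently ``$\mathrm{ev}^*_{\tilde X}=s^*\mathrm{ev}^*_X$ plus projection formula'' is not available as stated. The paper instead builds point-constrained versions of every stack in Theorem \ref{mthm1} ($\mathscr{M}(\tilde X,\pmb\gamma,z')$, $\mathfrak{M}^{ev}_{\gamma\to\tau,z'}$, $\mathfrak{M}^{ev}(\mathcal{X},\tau,z)$), proves the resulting square is cartesian with compatible obstruction theories --- the crux being that $B\mathbb{G}_m^\dagger\to\mathscr{P}(\tilde X,r)$ is the fs pullback of $B\mathbb{G}_m^\dagger\to\mathscr{P}(X,r)$, diagram (\ref{di2}) --- and computes the degree of $\mathfrak{M}^{ev}_{\gamma\to\tau,z'}\to\mathfrak{M}^{ev}_{\tau,z}$ to be $m_\tau/m_\gamma$, where $m_\tau,m_\gamma$ are the scheme-theoretic multiplicities of the idealized-log-smooth stacks over $B\mathbb{G}_m^\dagger$ (cokernels of the tropicalized evaluation, not the lattice index of $\gamma\to\tau$). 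These factors cancel to give $\pi_*[\mathscr{M}(\tilde X,\pmb\gamma,z')]^{vir}=[\mathscr{M}_{\pmb\tau,z}]^{vir}$ with coefficient exactly $1$; no $1/m_\gamma$ survives into Proposition \ref{strcomp}.

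Relatedly, the ``chief obstacle'' you defer --- that the images of the various $\dim$-matching lifts partition $\mathscr{M}(X/B,\pmb\tau)$ with weights $m_\gamma^{-1}$ summing correctly --- is not the right combinatorial picture and is not purely combinatorial. Because the generic points of $\mathfrak{M}^{ev}(\mathcal{X},\beta,z)$ have ghost monoid $\NN$, the tropical point constraint selects a single ray $\mathbb{R}_{\ge0}\to\tau$, which factors through a \emph{unique} maximal cone of the subdivision $\tilde\tau$; hence for each contributing $\pmb\tau$ exactly one underlying lift $\gamma$ appears, and the sum in $N_{\pmb\tau}=\sum_{\pmb\gamma\to\pmb\tau}N_{\pmb\gamma}$ runs only over decorations of that fixed $\gamma$. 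Without replacing your projection-formula step by the point-constrained cartesian diagram and the $m_\tau/m_\gamma$ degree computation, the weights in your proposed identity do not close up.
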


 We will also use Theorems \ref{mthm1} and \ref{mthm2} to prove the canonical wall structure constructed by Gross and Siebert in \cite{scatt} using punctured log Gromov Witten invariants behaves well under log \'etale modifications in the following sense:
 
 \begin{corollary}
 With $\tilde{X}$ and $X$ as above and also satisfying Condition $1.1$ or $1.2$ of \cite{scatt}, then the pushforward of the canonical wall structure on $\widetilde{\mathscr{P}}$ to $\mathscr{P}$ is equivalent to the canonical wall structure on $\mathscr{P}$. 
 \end{corollary}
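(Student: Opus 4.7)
The plan is to compare the pushforward scattering diagram and the canonical scattering diagram on $\mathscr{P}$ wall by wall. Both diagrams are assembled from punctured log Gromov--Witten invariants: the wall function at a codimension-one cell is a generating series whose coefficients are integrals of insertion classes against $[\mathscr{M}(X/B,\tau)]^{vir}$ (respectively $[\mathscr{M}(\tilde{X}/B,\gamma)]^{vir}$), weighted by combinatorial multiplicities involving lattice indices. Pushing the canonical scattering diagram on $\widetilde{\mathscr{P}}$ down along the subdivision $\widetilde{\mathscr{P}} \to \mathscr{P}$ groups walls whose images coincide and sums their contributions. I would reduce the corollary to checking, for each tropical type $\tau$ contributing a wall on $\mathscr{P}$, that the coefficient of each monomial in its wall function agrees with the sum of the pushed-forward coefficients from those realizable types $\gamma$ on $\Sigma(\tilde{X})$ whose image under the subdivision and under $\mathrm{st}_*$ is $\tau$. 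By Definition \ref{dtlift} these $\gamma$ are precisely the decorated lifts of the relevant decorated type $\pmb\tau$, and they can be enumerated from the subdivision data.

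Given this reduction, I would apply Theorem \ref{mthm2} to each lift $\gamma$ of $\tau$, using the decorated version Theorem \ref{decthm} to pin down which components of $\mathscr{M}(X/B,\pmb\tau)$ receive contributions. The insertion classes appearing in the coefficients (point and psi classes at the distinguished leg, together with evaluation classes) are pulled back along $\pi: \tilde{X} \to X$, so the projection formula gives that the integral against $s_*[\mathscr{M}(\tilde{X}/B,\gamma)]^{vir}$ equals $1/m$ times the integral against $[\mathscr{M}(X/B,\tau)]^{vir}$, where $m = |\mathrm{coker}(\gamma^{gp} \to \tau^{gp})|$. Summing over all lifts $\gamma$ of $\tau$ then rewrites the coefficient of the pushforward wall function in terms of invariants downstairs, and in particular reduces equality of the two scattering diagrams to a purely combinatorial identity between the multiplicities weighting $\tau$ on $\mathscr{P}$ and the total weight accumulated from the $\gamma$'s lifting $\tau$ on $\widetilde{\mathscr{P}}$.

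The main obstacle is establishing exactly this combinatorial identity. The canonical scattering diagram assigns to each wall a multiplicity built from lattice indices of the affine span of the wall inside the ambient cone, possibly together with automorphism weights arising from tropical moduli. Under a subdivision, a single wall of $\mathscr{P}$ can be covered by several walls of $\widetilde{\mathscr{P}}$, each with its own intrinsic lattice and its own tropical automorphisms. I expect the factor $1/m$ from Theorem \ref{mthm2} to cancel precisely against the ratio of these lattice indices, so that summing over lifts $\gamma$ of $\tau$ reassembles exactly the multiplicity prescribed by the Gross--Siebert construction on $\mathscr{P}$. Unpacking this identity, and in particular verifying it uniformly on walls where the tropical moduli have positive dimension (so that Theorem \ref{decthm} rather than the zero-dimensional case of Theorem \ref{mthm2} is the main input), is the essential technical point; equivalence of the pushforward and canonical scattering diagrams then follows wall by wall.
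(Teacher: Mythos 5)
Your overall strategy is the one the paper follows: compare the two diagrams wall by wall, use the decorated Theorem \ref{decthm} to see that the $\mathscr{M}(\tilde{X},\pmb\gamma)$ cover $\mathscr{M}(X,\pmb\tau)$ by unions of connected components, apply Theorem \ref{mthm2} to get $\sum_{\pmb\gamma}N_{\pmb\gamma}=\tfrac{1}{m}N_{\pmb\tau}$, and cancel the factor $\tfrac{1}{m}$ against the ratio $k_\gamma/k_\tau$ of the lattice indices appearing in the wall functions. The combinatorial identity $m=k_\gamma/k_\tau$ that you flag as the essential technical point is exactly the paper's closing computation, and it follows quickly because the tropical moduli of a wall type is identified, via $h$, with the span of the image of the unique leg; so the cokernel of $\gamma^{gp}\rightarrow\tau^{gp}$ is computed from the two maps $\Lambda_{\gamma_{L_{out}}},\Lambda_{\tau_{L_{out}}}\rightarrow\Lambda_{\pmb\sigma(L_{out})}$. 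One simplification you can make: the coefficients $N_{\pmb\tau}$ in the canonical scattering diagram are plain degrees of zero-dimensional virtual classes, with no point, psi, or evaluation insertions, so no projection-formula step is needed --- Theorem \ref{mthm2} applies verbatim.

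The genuine gap is on the support side. You reduce to checking coefficients ``for each tropical type $\tau$ contributing a wall on $\mathscr{P}$,'' which presumes that every wall of $\widetilde{\mathscr{S}}_{can}$ pushes forward onto a wall of $\mathscr{S}_{can}$. That is not automatic: a wall type $\gamma$ on $\tilde{X}$ (so $\dim\gamma=n-2$) stabilizes to a unique realizable decorated type $\pmb\tau$ on $X$, but since $\gamma$ only includes into $\tau$ one merely gets $\dim\tau\ge n-2$, and when $\dim\tau>n-2$ the type $\pmb\tau$ is not a wall type even though the image of $\mathfrak{d}_\gamma$ is still a codimension-one cell of $\mathscr{P}$. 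Without ruling these out, the pushforward diagram could carry walls with no counterpart in $\mathscr{S}_{can}$. The paper's Lemma \ref{awal} closes this: if $\dim\tau>n-2$ then $\mathscr{M}_{\pmb\tau}$ has negative virtual dimension, and since $\mathfrak{M}_{\gamma\rightarrow\tau}$ is reduced of pure dimension (being \'etale over $\mathfrak{M}_\gamma$), one applies \cite{int_mirror} Lemma $A.12$ to each irreducible component to conclude $N_{\pmb\gamma}=0$. This vanishing argument is a substantive step, not a formality, and your proof needs it before the coefficient comparison can begin.
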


Under the prediction that the mirror algebras constructed in \cite{int_mirror} and \cite{scatt} are the algebro-geometric realization of degree zero symplectic cohomology of $U$, this result says at least that after changing Novikov parameters, the mirror algebra is invariant under certain modifications of the boundary, which is expected under this prediction.

We begin by relating the tropical type of a log stable map $C \rightarrow \tilde{X}$ with the tropical type of the stabilization of $C \rightarrow X$. Following this, we show a certain class of non-basic log stable maps $\overline{C} \rightarrow X$ determine log stable maps $C \rightarrow \tilde{X}$, and how this class of log stable maps is encoded by morphisms to an auxiliary stack $\mathfrak{M}_{\gamma \rightarrow \tau}$ which is constructed by considering an associated tropical moduli problem. By pulling back along morphisms from the log to the tropical moduli problem, as in \cite{stacktrop} \cite{expand}, we produce the desired stack. We then proceed to construct the upper horizontal arrows in the commutative diagram of Theorem \ref{mthm1}, with necessary care taken for subtleties regarding the typically non-saturated log structures on punctured log curves. After this diagram is constructed and shown to be cartesian, the necessary properties largely follow from pulling back appropriate properties from morphisms of associated Artin fans.

\section{Acknowledgement}
I would like to thank my supervisor Mark Gross for his encouragement, reviewing drafts and many helpful comments and corrections, Dhruv Ranganathan for many useful discussions related to this work and help in creating diagrams, and the referee for many helpful comments. I would also like to thank Bernd Siebert for many useful comments and suggestions, especially concerning Section $5.1$. This work was done with the financial support of the ERC grant ``Mirror Symmetry in Algebraic Geometry" as well as the Heilbronn Institute for Mathematical Research and Imperial College London.
\section{Logarithmic and Tropical preliminaries}

Let $X \rightarrow B$ be a log smooth morphism of fine and saturated log schemes, locally of finite type with $X$ locally noetherian over an algebraically closed field $\kk$ of characteristic $0$.

In \cite{punc} Definition $2.1$, the authors introduce the following notion of a puncturing of a log structure:

\begin{definition}
Let $Y = (Y^u,\mathcal{M}_Y)$ be an fs log scheme, together with a decomposition $\mathcal{M}_Y = \mathcal{M} \oplus_{\mathcal{O}_Y^\times} \mathcal{P}$. A puncturing of $Y$ along $\mathcal{P}$ is a fine subsheaf of monoids $\mathcal{M}_{Y^\circ} \subset \mathcal{M} \oplus_{\mathcal{O}_Y^\times} \mathcal{P}^{gp}$, containing $\mathcal{M}_Y$, together with a structure map $\alpha_{Y^\circ}:\mathcal{M}_{Y^\circ} \rightarrow \mathcal{O}_Y$, satisfying:

\begin{enumerate}
\item The inclusion $p^\flat: \mathcal{M}_Y \rightarrow \mathcal{M}_{Y^\circ}$ is a morphism of log structures on $Y^u$.

\item For any geometric point $x \in Y$, let $s_x \in \mathcal{M}_{Y^\circ,x}\setminus \mathcal{M}_{Y,x}$. Representing $s_x = (m_x,p_x) \in \mathcal{M}_x \oplus_{\mathcal{O}^\times_{Y_x}} \mathcal{P}^{gp}_x$, we have $\alpha_{Y^\circ}(s_x) = \alpha_{\mathcal{M}}(m_x) = 0$ in $\mathcal{O}_{Y,x}$.

\end{enumerate}

\end{definition}

\begin{example}\label{pex}
Take $C = $Spec $\kk[x,y]/(xy)$, with log structure induced by the monoid $Q = \langle log(x),log(y),t  \mid log(x) + log(y) = t\rangle$ with global chart $\alpha(log(x)) = x$, $\alpha(log(y)) = y$, and $\alpha(t) = 0$. The underlying scheme is two copies of $\mathbb{A}^1$ meeting at a point, with $x$ and $y$ being coordinates for these components. Restricting the log structure to a component defines a log structure on $\mathbb{A}^1$ which is a puncturing of the log structure on $(\mathbb{A}^1,\{0\}) \times \text{Spec } (\mathbb{\kk},\langle t\rangle)$.   

More generally, restriction of log structures on prestable log curves over log points to components is a puncturing of a natural log structure on the subscheme.
\end{example}

In this paper, we will only be interested in puncturing log curves along marked points, i.e. puncturings of log curves $C^\circ \rightarrow C$ such that $\mathcal{M}_{C,p} \rightarrow \mathcal{M}_{C^\circ,p}$ is an isomorphism when $p$ is not a marked point of $C$. Throughout, unless stated otherwise, we will refer to $C/S$ as an ordinary log curve, and $p: C^\circ \rightarrow C$ as the puncturing of $C$ at marked points. We will in general suppress the puncturing map unless necessary. Due to subtleties of pulling back families of punctured log curves along strict morphisms on the base, the monoids $\mathcal{M}_{Y^\circ}$ will typically be fine but not necessarily saturated. When $C^\circ$ is equipped with a map to an fs log scheme $C^\circ \rightarrow X$ however, the condition of prestability detailed in \cite{punc} Definition $2.14$ gives a well behaved choice of puncturing, which we will be making use of throughout this paper.

In the interest of tropicalizing log stacks, we let \textbf{Cones} be the category whose objects are rational polyhedral cones $\sigma$ equipped with an integral lattice $\Lambda_\sigma$, i.e., $\sigma \subset \Lambda_{\sigma} \otimes_{\mathbb{Z}} \mathbb{R}$. Throughout this paper, we refer to $\sigma$ as a real cone, $\sigma_{\mathbb{Q}} = \sigma \cap (\Lambda_{\sigma} \otimes_{\ZZ} \mathbb{Q})$ as the monoid of rational points, and $\sigma_\mathbb{N} = \sigma \cap \Lambda_{\sigma}$ as the monoid of integral points. We let \textbf{RPCC} be the category of rational polyhedral cone complexes. 

In order to translate combinatorial constructions in the category of cone complexes to the category of log stacks, we make use of category of Artin cones, Artin fans, and idealized Artin fans.

\begin{definition}\label{ACdef}
Let $\sigma$ be a cone, with associated affine toric variety $S_\sigma$ containing a dense torus $T_\sigma$. Then the Artin cone $\mathcal{A}_{\sigma}$ is defined to be the stack quotient $[S_\sigma/T_\sigma]$. If $\sigma' \subset \sigma$ is a lattice refinement, there is an associated root stack of $S_\sigma$ associated with $\sigma'$, which we denote by $S_{\sigma'}$. This toric stack in turn has an Artin fan given by $[S_{\sigma'}/T_{\sigma'}]$.

For a face $\omega \subset \sigma$, let $S_{\sigma,\omega}$ be the closure of the torus stratum in $S_\sigma$ associated with the interior of the cone $\omega$, and we define the associated idealized Artin cone $\mathcal{A}_{\sigma,\omega}$ to be the stack quotient $[S_{\sigma,\omega}/T_{\sigma}]$. 

We say an Artin fan $\mathcal{A}$ has faithful monodromy if the tautological morphism $\mathcal{A} \rightarrow \textbf{Log}$ is representable.
 
\end{definition}

In order to relate tropical moduli problems to log geometric moduli problems, we recall the equivalence detailed in \cite{stacktrop} Theorem $6.11$. 

\begin{lemma}\label{equiv}
There is an equivalence of $2$-categories between the categories of Artin fans and cone stacks. On a cone $\sigma \in Ob(\textbf{RPCC})$, the equivalence is given by sending $\sigma$ to $\mathcal{A}_\sigma$. 
\end{lemma}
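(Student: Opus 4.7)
The plan is essentially to appeal to the construction in \cite{stacktrop} Theorem $6.11$, and the proposal here is to outline the main features of that proof adapted to the notation of this paper. The key idea is that the functor $\sigma \mapsto \mathcal{A}_\sigma$ on cones already admits a clean description: since $\mathcal{A}_\sigma = [S_\sigma/T_\sigma]$, the characteristic monoid sheaf at the unique closed point of $\mathcal{A}_\sigma$ recovers the dual monoid $\sigma^\vee \cap \Lambda_\sigma^\vee$, from which $\sigma$ itself can be reconstructed. The strategy is thus to build explicit $2$-functors in both directions and check they are mutually quasi-inverse.

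First I would verify that the assignment $\sigma \mapsto \mathcal{A}_\sigma$ extends to a $2$-functor from \textbf{RPCC} (or more generally, from the $2$-category of cone stacks) to the $2$-category of Artin fans. On morphisms, a map of cones $\sigma \to \sigma'$ is given by a lattice map $\Lambda_\sigma \to \Lambda_{\sigma'}$ carrying $\sigma$ into $\sigma'$, and dualization produces a map of toric varieties $S_\sigma \to S_{\sigma'}$ equivariant for the torus homomorphism $T_\sigma \to T_{\sigma'}$, descending to a morphism $\mathcal{A}_\sigma \to \mathcal{A}_{\sigma'}$. The $2$-cells correspond to automorphisms of cones compatible with face maps, which exactly match isomorphisms of the resulting Artin cones over \textbf{Log}. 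One then extends to cone stacks (resp. Artin fans) by sheafifying along the appropriate étale (resp. face) topology and checking descent.

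For the inverse, I would construct a tropicalization $2$-functor $\Sigma: \mathcal{A} \mapsto \Sigma(\mathcal{A})$ on Artin fans with faithful monodromy. Locally on a smooth atlas $\coprod_i \mathcal{A}_{\sigma_i} \to \mathcal{A}$, the cones $\sigma_i$ glue along face inclusions determined by the strict open immersions of the atlas, producing a cone stack whose $2$-cells come from the stabilizers of the atlas. Faithful monodromy ensures that the resulting cone stack does not collapse automorphisms artificially. To verify the two functors are mutually quasi-inverse, one checks locally on a smooth chart that $\Sigma(\mathcal{A}_\sigma) \simeq \sigma$ tautologically, and conversely that an Artin fan with faithful monodromy is recovered from its tropicalization by applying $\mathcal{A}_{(-)}$ cone-by-cone and gluing along the face relations in the cone stack.

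The main obstacle is bookkeeping the $2$-categorical coherence: natural transformations between the composites and the identity functors must be specified on the level of morphisms of cones and shown to satisfy the hexagon/pentagon axioms, and this is where subtleties with non-representable automorphisms of Artin fans (arising from nontrivial stabilizers on cones in a cone stack) can obstruct naive constructions. Handling this properly requires either checking everything on a strict atlas and using descent for $2$-stacks, or equivalently, working entirely within the cofibered-category picture of $T$-torsors over toric strata as in \cite{stacktrop}. Since the argument is given in detail there, I would simply record the statement as cited and apply it in the sequel.
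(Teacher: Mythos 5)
The paper itself gives no proof of this lemma: it simply records the statement as a consequence of \cite{stacktrop} Theorem $6.11$. Your proposal ultimately does the same thing, and the accompanying sketch of that argument (construct $\sigma \mapsto \mathcal{A}_\sigma$ as a $2$-functor, construct tropicalization as a quasi-inverse via the characteristic monoid on a strict atlas, glue over the cone-stack topology) is a faithful summary of the cited proof, so your approach matches the paper's.
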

Associated to any locally noetherian fine log algebraic stack $X$ is an Artin fan $\mathcal{A}_X$ which is initial among all Artin fans $\mathcal{A}$ with faithful monodromy and a strict map $X \rightarrow \mathcal{A}$. See \cite{bound} section $3$ for further details. While loc cit. constructs such an Artin fan, the assignment fails to be functorial. Nonetheless, letting $\mathcal{A}_B$ be the Artin fan for a log smooth scheme $B$ as above, for a log smooth morphism $X \rightarrow B$, there exists an Artin fan $\mathcal{A}_X'$, a strict morphism $X \rightarrow \mathcal{A}_X'$, and a morphism $\mathcal{A}_X' \rightarrow \mathcal{A}_B$ making the obvious square in the category of log stacks commute. Indeed, this fact follows by \cite{bound} Corollary $3.3.6$. In general, we will assume we have such a morphism. Hence, we assume we have an Artin fan $\mathcal{A}_X$, a strict morphism $X \rightarrow \mathcal{A}_X$, and a morphism $\mathcal{A}_X \rightarrow \mathcal{A}_B$ making the obvious square commute, and we let $\mathcal{X} = \mathcal{A}_X\times_{\mathcal{A}_B} B$ be the relative Artin fan of $X\rightarrow B$, as considered in the preamble of Section $3$ of \cite{punc}. Note in particular we have a strict map $X \rightarrow \mathcal{X}$.

In order to produce a tropcalization functor, we recall the category of generalized cone complexes, introduced in \cite{tropcurv}, and the construction from Appendix $C$ of \cite{punc} of the tropicalization functor $\Sigma$ from algebraic fine log stacks to generalized cone complexes. If an Artin fan $\mathcal{A}$ has a Zariski cover by Artin cones, we will say $\mathcal{A}$ is a Zariski Artin fan. The following proposition follows from the proof of Proposition $2.10$ of \cite{decomp}.

\begin{lemma}\label{mtoZA}
For a fine log scheme $T$, and a Zariski Artin fan $\mathcal{A}$, then $Hom_{log}(T,\mathcal{A})$ is the subset of $Hom_{Cones}(\Sigma(T),\Sigma(\mathcal{A}))$ such that for every induced morphism $\sigma \rightarrow \tau \in \Sigma(\mathcal{A})$ for $\sigma \in \Sigma(T)$, the morphism $\tau_{\NN}^\vee \rightarrow \sigma_{\NN}^{\vee} = \Gamma(U,\overline{\mathcal{M}}_U)^{sat}$ factors through $\Gamma(U,\overline{\mathcal{M}}_U)$.
\end{lemma}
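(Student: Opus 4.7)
The plan is to follow the structure of the proof of Proposition 2.10 of \cite{decomp} but carefully track where saturation of $T$ is used. The inclusion $Hom_{log}(T,\mathcal{A}) \subset Hom_{Cones}(\Sigma(T),\Sigma(\mathcal{A}))$ is automatic from functoriality of tropicalization, so the entire content is to identify the image, that is, to characterize which cone morphisms arise from actual log morphisms.

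First, I would reduce to a local computation. Choose a strict \'etale cover of $T$ by atomic log schemes $U_i \rightarrow T$ with characteristic monoids $P_i := \Gamma(U_i, \overline{\mathcal{M}}_{U_i})$, which are fine but not necessarily saturated. Simultaneously, cover $\mathcal{A}$ Zariski-locally by Artin cones $\mathcal{A}_{\tau_j}$. Given a cone morphism $\Sigma(T) \rightarrow \Sigma(\mathcal{A})$, the image of the cone $\sigma_i = P_i^\vee$ associated to $U_i$ lies in some single $\tau_{j(i)}$ by the Zariski hypothesis on $\mathcal{A}$. Thus the problem reduces to determining when the induced cone morphism $\sigma_i \rightarrow \tau_{j(i)}$ lifts to a log morphism $U_i \rightarrow \mathcal{A}_{\tau_{j(i)}}$.

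Next, in this atomic local case, since $\mathcal{A}_{\tau_{j(i)}} = [S_{\tau_{j(i)}}/T_{\tau_{j(i)}}]$ and $U_i$ is atomic with chart monoid $P_i$, a log morphism $U_i \rightarrow \mathcal{A}_{\tau_{j(i)}}$ is equivalent to a morphism of fine monoids $\tau_{j(i),\NN}^\vee \rightarrow P_i$, with the composition to $\mathcal{O}_{U_i}$ automatic since the torus quotient absorbs any indeterminacy. Dually, the cone morphism $\sigma_i \rightarrow \tau_{j(i)}$ furnishes a monoid morphism $\tau_{j(i),\NN}^\vee \rightarrow \sigma_{i,\NN}^\vee = P_i^{sat}$, and the log lift exists precisely when this map factors through the inclusion $P_i \hookrightarrow P_i^{sat}$. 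In the fs case of \cite{decomp} one has $P_i = P_i^{sat}$, so the factorization is automatic and recovers the unconditional correspondence there; here the failure of saturation is exactly what the factorization hypothesis measures.

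Finally, I would globalize by checking that the local lifts agree on overlaps $U_i \times_T U_{i'}$, which follows from the uniqueness of the monoid morphism produced in the atomic step, together with the fact that the input is a single cone morphism defined on the whole tropicalization $\Sigma(T)$. The main obstacle I expect is managing the subtlety introduced by $T$ being only fine: one must verify that the factorization condition through $\Gamma(U,\overline{\mathcal{M}}_U)$ is both necessary and sufficient on each atomic piece, and that no additional compatibility is hidden in the interplay between saturation and strict \'etale descent. Everything else amounts to bookkeeping essentially identical to the argument in \cite{decomp}.
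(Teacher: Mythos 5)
The paper does not give a proof of this lemma; it simply states that it follows from the proof of \cite{decomp} Proposition~2.10. Your reconstruction is correct and follows the intended line of argument: reduce to atomic pieces over Artin cones, invoke the modular interpretation of $\mathcal{A}_\tau$ as classifying monoid morphisms $\tau^\vee_\NN \rightarrow \Gamma(\cdot, \overline{\mathcal{M}})$, identify the factorization through $\Gamma(U,\overline{\mathcal{M}}_U) \hookrightarrow \Gamma(U,\overline{\mathcal{M}}_U)^{sat}$ as exactly the datum lost when $T$ is fine but not saturated, and glue using the uniqueness of that factorization (which follows from the injectivity of the saturation map).
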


At various points in this paper, we will be interested in constructing morphisms from a fine log scheme $T$ to an Artin fan $\mathcal{A}$, typically the Artin fan associated with a target log scheme $X$. The main case we are interested in is when we are provided with a morphism to an Artin fan $\mathcal{A}$, and wish to produce a lift along a log \'etale modification $\widetilde{\mathcal{A}} \rightarrow \mathcal{A}$. We use a lemma appearing in \cite{logDR} Lemma $6$, given in a slightly modified state below to account for the fine log schemes considered in our context:

\begin{lemma}\label{mtoA}
Let $p: \widetilde{\mathcal{A}} \rightarrow \mathcal{A}$ be a log modification, induced by a subdivision $\widetilde{\Sigma(\mathcal{A})} \rightarrow \Sigma(\mathcal{A})$. Then, the following two properties hold:
\begin{enumerate}
\item p is a representable monomorphism of logarithmic algebraic stacks.
\item A log map $S \rightarrow \mathcal{A}$ from an fs log scheme $S$ lifts to $\widetilde{\mathcal{A}}$ if and only if, \'etale locally on $S$, the map $\Sigma(S) \rightarrow \Sigma(\mathcal{A})$ lifts through $\widetilde{\Sigma(\mathcal{A})}$. If $S$ is only fine, we further require the factorization property of Lemma \ref{mtoZA}. 
\end{enumerate}
\end{lemma}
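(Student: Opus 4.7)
The plan is to handle the two items separately, in each case reducing the content to the combinatorial situation of cone subdivisions via the equivalence of Lemma \ref{equiv}. For (1), the property of being a representable monomorphism is \'etale-local on $\mathcal{A}$, so after passing to an \'etale cover I may assume $\mathcal{A} = \mathcal{A}_\sigma$ is a single Artin cone and the subdivision $\widetilde{\Sigma(\mathcal{A})} \rightarrow \Sigma(\mathcal{A})$ is given by cones $\{\tau_i\}$ glued along faces. On each piece, $\mathcal{A}_{\tau_i} \rightarrow \mathcal{A}_\sigma$ comes from the toric birational morphism $S_{\tau_i} \rightarrow S_\sigma$ modulo the common torus action, and the statement that a subdivision of a fan induces a monomorphism on the associated cone stack translates under Lemma \ref{equiv} to the monomorphism statement for $\widetilde{\mathcal{A}} \rightarrow \mathcal{A}$. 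Representability then follows because both $\mathcal{A}$ and $\widetilde{\mathcal{A}}$ have faithful monodromy: the composite $\widetilde{\mathcal{A}} \rightarrow \mathcal{A} \rightarrow \textbf{Log}$ is representable, and since the second map is representable, so is the first.

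For (2), suppose first that $S$ is fs. A lift $S \rightarrow \widetilde{\mathcal{A}}$ over $\mathcal{A}$ tropicalizes to a morphism of cone stacks $\Sigma(S) \rightarrow \widetilde{\Sigma(\mathcal{A})}$ lifting $\Sigma(S) \rightarrow \Sigma(\mathcal{A})$, which gives the ``only if'' direction. Conversely, given an \'etale-local cone-level lift of $\Sigma(S) \rightarrow \Sigma(\mathcal{A})$, apply Lemma \ref{equiv} to each atomic chart $U$ of a strict \'etale cover of $S$ to produce an \'etale-local morphism $U \rightarrow \widetilde{\mathcal{A}}$ compatible with the given $U \rightarrow \mathcal{A}$. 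The monomorphism property from (1) then guarantees uniqueness of these local lifts, so that they glue canonically to a global morphism $S \rightarrow \widetilde{\mathcal{A}}$ over $\mathcal{A}$.

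The main obstacle is the case where $S$ is only fine but not saturated: the tropicalization $\Sigma(S)$ is built from the saturation of $\overline{\mathcal{M}}_S$, so a cone-level lift a priori only produces a morphism into the saturated version, effectively factoring as $S \rightarrow S^{sat} \rightarrow \widetilde{\mathcal{A}}$ rather than a genuine morphism out of $S$. The additional condition from Lemma \ref{mtoZA}, that each induced characteristic monoid map $\tau_{\NN}^\vee \rightarrow \sigma_{\NN}^\vee = \Gamma(U,\overline{\mathcal{M}}_U)^{sat}$ factor through $\Gamma(U,\overline{\mathcal{M}}_U)$, is precisely what is needed to promote the saturated local chart into a chart for the fine log structure on $U$. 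Once this is verified on each atomic chart, the same \'etale local-to-global gluing as in the fs case, using uniqueness from the monomorphism property, produces the desired lift $S \rightarrow \widetilde{\mathcal{A}}$; the ``only if'' direction remains automatic, as tropicalization of any lift already satisfies the factorization condition. This reduces the statement to the fs version recorded in \cite{logDR} Lemma 6.
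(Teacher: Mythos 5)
The paper does not actually give its own proof of this lemma: it is cited directly from \cite{logDR} Lemma~6, ``given in a slightly modified state\ldots{} to account for the fine log schemes considered in our context,'' and no proof block follows. So there is no internal argument to compare yours against; I can only assess your reconstruction on its own terms.

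Your overall strategy — reduce to a single Artin cone $\mathcal{A}_\sigma$ by \'etale locality, translate the subdivision into a statement about cone stacks via Lemma~\ref{equiv}, establish uniqueness of local lifts from the monomorphism property and glue, and then handle the non-saturated case by observing that the extra factorization condition of Lemma~\ref{mtoZA} is precisely what promotes a lift of $\Sigma(S)$ (which is built from the saturation of $\overline{\mathcal{M}}_S$) into an honest morphism out of $S$ — is a faithful reconstruction of what the cited argument plus the paper's ``modification'' must amount to. The treatment of the fine case in particular correctly pinpoints where saturation is implicitly used in the fs argument.

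The one step I would push back on is the representability argument. You write that representability of $p$ follows because ``both $\mathcal{A}$ and $\widetilde{\mathcal{A}}$ have faithful monodromy,'' so that both maps to $\textbf{Log}$ are representable and hence so is $p$. Faithful monodromy is not among the hypotheses of the lemma (the paper defines it as an extra property an Artin fan may or may not enjoy), and you haven't justified that it is inherited by a subdivision even when $\mathcal{A}$ has it. Neither assumption is needed: working on an Artin cone chart $\mathcal{A}_\sigma = [S_\sigma/T_\sigma]$, the subdivision pulls back to the Artin fan covered by $\mathcal{A}_{\tau_i} = [S_{\tau_i}/T_\sigma]$ for $\tau_i$ the cones of the subdivision of $\sigma$; since a subdivision does not change the ambient lattice the torus is literally the same $T_\sigma$, and the map $[S_{\tau_i}/T_\sigma] \rightarrow [S_\sigma/T_\sigma]$ is the quotient of the representable toric morphism $S_{\tau_i} \rightarrow S_\sigma$ by a fixed group action, hence representable with no auxiliary assumptions. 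This is both more elementary and applicable in the full generality of the statement. With that substitution your proof reads correctly.
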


Having related the underlying sites of the tropical and logarithmic moduli problems, we now consider the tropical moduli problem associated with punctured tropical maps.

Consider a graph $G$, with vertex set $V(G)$, an edge set $E(G)$, and leg set $L(G)$. An abstract tropical curve over a cone $\sigma \in \textbf{Cones}$ is the data $(G,\textbf{g},l)$, with $G$ a graph, $\textbf{g}: V(G) \rightarrow \mathbb{N}$ a genus function, and $l: E(G) \rightarrow Hom(\sigma_\mathbb{N},\mathbb{N})\setminus 0$ an assignment of edge lengths. The data of $(G,l)$ determines a cone complex $\Gamma_\sigma$ over $\sigma$, with cones in bijection with features $L(G)\cup E(G) \cup V(G)$ a feature. We let $\sigma_F$ be the cone associated with the feature $F$. Let $\pi_{(G,l)}: \Gamma_\sigma \rightarrow \sigma$ be the projection. This map of cone complexes satisfies the condition that for $s \in int(\sigma)$, the fiber $\Gamma_s$ over $s$ is isomorphic to $G$, with the length of an edge $e \in E(G)$ given by $l(e)(s)$. Note in particular that when $l(e)(s) = 0$, the fiber $\Gamma_s$ has a $1$-dimensional cell associated with $e \in E(G)$ which is contracted. Along with families of ordinary tropical curves, we will also be interested in families of punctured tropical curves.

\begin{definition}\label{ptropcurve}
Given a family of ordinary tropical curves $\pi_{(G,l)}: \Gamma_\sigma \rightarrow \sigma$, a puncturing $\Gamma_\sigma^\circ \rightarrow \sigma$ of $\Gamma_\sigma$ is a complex of subcones $i: \Gamma_{\sigma}^\circ \rightarrow \Gamma_\sigma$ over $\sigma$ which is an isomorphism away from the cones $\sigma_l \in \Gamma_\sigma$ associated with legs $l \in L(G)$, and upon restricting $i$ to all cones mapping to $\sigma_l$, $i$ is given by the inclusion of a full dimensional subcone $\sigma_l^\circ \subset \sigma_l$ which contains $\sigma_v$ as a face for $v$ the unique vertex of $G$ contained in $l$. We let $\sigma_F^\circ$ denote the unique equidimensional cone mapping to $\sigma_F$ under $i$ for $F \in L(G) \cup E(G)\cup V(G)$, and call $\cup_F i(int(\sigma_F^\circ))$ the \emph{proper image} of $i$.
\end{definition}

\begin{figure}
\centering
\begin{tikzpicture}
\fill[white!40!violet, path fading = east] (-2,0,0)--(4,0,0)--(0,0,4)--cycle;
\fill[white!40!blue, path fading = east] (-2,0,0)--(4,0,0)--(2,4,2)--cycle;
\fill[white!40!blue, path fading = east] (-2,0,0)--(0,0,4)--(2,4,2)--cycle;
\draw[color=black] (-2,0,0)--(4,0,0)--cycle;
\draw[color=black] (4,0,0)--(2,4,2);
\draw[color=black] (0,0,4)--(2,4,2);
\draw[color=black] (-2,0,0)--(0,0,4)--cycle;
\draw[black](0,0,4)--(4,0,0);
\draw[color=black] (-2,0,0)--(2,4,2)--cycle;
\draw[->,color=red] (2,0,2)--(2,4,2);
\draw[ball color=red] (2,0,2) circle (0.75mm);

\fill[white!40!violet, path fading = east] (6,0,0)--(12,0,0)--(8,0,4)--cycle;
\fill[white!40!blue, path fading = east] (6,0,0)--(6,6,0)--(12,6,0)--(12,0,0)--cycle;
\fill[blue] (6,0,0)--(8,0,4)--(8,6,4)--(6,6,0)--cycle;
\draw[color=black] (6,6,0)--(12,6,0);
\draw[color=black] (12,6,0)--(12,0,0);
\draw[color=black] (6,6,0)--(8,6,4);
\draw[color=black] (8,6,4)--(8,0,4);
\draw[color=black] (6,0,0)--(12,0,0)--cycle;
\draw[-,dashed] (12,0,0)--(10,4,2);
\draw[-,dashed] (8,0,4)--(10,4,2);
\draw[color=black] (6,0,0)--(8,0,4)--cycle;
\draw[black](8,0,4)--(12,0,0);
\draw[-,dashed] (6,0,0)--(10,4,2)--cycle;
\draw[color=black] (6,0,0)--(6,6,0);
\draw[->,color=red] (10,0,2)--(10,6,2);
\draw[ball color=red] (10,0,2) circle (0.75mm);
\end{tikzpicture}
\caption{An example of a puncturing morphism $\Gamma_{\sigma}^\circ \rightarrow \Gamma_{\sigma}$ restricted to a cone $\sigma_{l}$ associated with a leg. In this case, we have $\sigma \cong \mathbb{R}_{\ge 0}^2$, and the leg $l$ of one of the paramterized tropical curves is depicted. Note that unlike for edges, the length of the leg need not be a linear function on the base.}
\end{figure}

Given a generalized cone complex $\Sigma$, a family of tropical maps to $\Sigma$ over $\sigma$ as above is morphism of generalized cone complexes $h: \Gamma_\sigma \rightarrow \Sigma$. As before, over a point $s \in \sigma$, we have a tropical curve $\Gamma_s$, and a piecewise linear map $\Gamma_s \rightarrow \Sigma$. In particular, this gives two functions. First, we have a function $\pmb\sigma: V(G) \cup E(G) \cup L(G) \rightarrow \Sigma$, which assigns to a feature of $G$, the cone of the complex $\Sigma$ which the feature maps into. Secondly, we have a function $\textbf{u}$ which assigns to an element $e \in E(G)$ an integral tangent vector $u_e \in \Lambda_{\pmb\sigma(e)}$. The data $(G,\pmb\sigma,\textbf{u},\textbf{g})$ for $s \in int(\sigma)$ determines the tropical type of the family.

Given the data of a tropical type $(G,\pmb\sigma,\textbf{u},\textbf{g})$ above, we say the tropical type is \emph{realizable} if there exists a family of tropical curves realizing this type. In such a situation, there is a cone $\tau \in \textbf{Cones}$ which is the universal family of tropical curves of type $\tau$, i.e. there exists a family $\Gamma_\tau/\tau$ of tropical maps to $\Sigma(X)$ with type $(G,\pmb\sigma,\textbf{u},\textbf{g})$ over a cone $\tau$, and all other families of type $(G,\pmb\sigma,\textbf{u},\textbf{g})$ over a cone $\omega \in \textbf{Cones}$ arise as the pullback along an appropriate map $\omega \rightarrow \tau$. Whenever the type is realizable, we will refer to both the tropical type and the associated universal cone by the same name. 

Families of tropical maps in the context of log Gromov-Witten theory typically arise as follows: Suppose we have a punctured log map $(C^\circ/W,f)$ with target an fs log scheme $X$, or its Artin fan $\mathcal{A}_X$. Functorial tropicalization then allows us to tropicalize the family of maps. Care should be taken however after tropicalizing the typically non-saturated log scheme $C^\circ$. For a fine log scheme, the stalks of the saturation of the ghost sheaf are equal to the dual monoids of cones of the tropicalization, and in particular are generally larger than the fine monoids at stalks of the ghost sheaf of $C^\circ$. However, assuming the log structure is prestable in the sense of \cite{punc} Definition $2.1.4$, the data of the fine submonoid at a punctured section can be recovered from the data of the tropical map, see the remark following Proposition $2.21$ of \cite{punc}.

We also recall that how local sections of the sheaf $\mathcal{M}_{C^\circ}$ induce PL functions on subcomplexes of the tropical family $\Gamma = \Sigma(C)$. First, recall the ghost exact sequence of sheaves of abelian groups:

\[ 0 \rightarrow \mathcal{O}^{\times}_X \rightarrow \mathcal{M}_{X}^{gp} \rightarrow \overline{\mathcal{M}}_X^{gp} \rightarrow 0. \]

Since we assume $X$ has a fine log structure, we have injections of sheaves of monoids $\mathcal{M}_{X} \rightarrow \mathcal{M}_X^{gp}$ and $\overline{\mathcal{M}}_{X} \rightarrow \overline{\mathcal{M}}_X^{gp}$. Recall that the tropicalization $\Sigma(X)$ is constructed out of a colimit of atomic open neighborhoods $U \subset X$, i.e. connected open neighborhoods containing a unique closed stratum such that for all points $x$ contained in this closed stratum, we have $\Gamma(U,\overline{\mathcal{M}}_U) \cong \overline{\mathcal{M}}_{U,x}$ with isomorphism induced by restriction. On a given atomic neighborhood $U$, we have $\Sigma(U) = Hom_{mon}(\Gamma(U,\overline{\mathcal{M}}_X),\mathbb{R}_{\ge 0})$. This is a rational polyhedral cone, and by construction, local sections of the ghost sheaf determine non-negative linear functions on the cone. In general, given sections either of $\mathcal{M}_X$ or $\overline{\mathcal{M}}_X$ determined on some open set $U \subset X$, then $U$ is covered by atomic neighborhoods, and hence the restriction of the section determines linear functions on each of the associated cones, which furthermore are equal on overlapping faces inside $\Sigma(X)$. Hence, sections of $\mathcal{M}_X$ and $\overline{\mathcal{M}}_X$ define PL functions on $\Sigma(X)$. If $X$ is a family of punctured log curves over a base log scheme $S$, then the pullback of linear functions on the base correspond to PL functions on $\Sigma(X)$ which are constant on fibers of $\Sigma(X) \rightarrow \Sigma(S)$, and the remaining PL functions induce linear functions along the edges and legs of the tropical curves in the family.

In order to use the tropical data above to specify discrete data for the logarithmic moduli spaces under consideration in this paper, recall the definition of a marking of a tropical type $\tau'$ by $\tau$ from \cite{punc} Definition $3.7$. When $\tau'$ and $\tau$ are both realizable, this means that $\tau$ can be identified with a face of $\tau'$, and by pulling back the universal family of tropical maps $\Gamma'/\tau'$ along this face inclusion, we produce the universal family of tropical maps $\Gamma/\tau$. We may also decorate tropical types by curve classes in some monoid $H^+_2(X)$ by assigning an element $\textbf{A}(v) \in H_2^+(X)$ to each vertex $v \in V(G_\tau)$. For a tropical type $\tau$, we let $\pmb\tau = (\tau,\textbf{A}(v))$ denote a decorated tropical type with underlying tropical type $\tau$. The notion of markings above extends in a natural way to a marking of a decorated tropical type by another decorated tropical type.

 By \cite{punc} Section $3$, to a tropical type $\tau$, not necessarily realizable, there is an associated DM log stack $\mathscr{M}(X/B,\tau)$ of stable log maps to $X$ marked by $\tau$, and an algebraic log stack $\mathfrak{M}(\mathcal{X}/B,\tau)$ of prestable punctured maps to $\mathcal{X}$ marked by $\tau$. By \cite{punc} Theorem 3.24, the latter stack is idealized log smooth over $B$. Furthermore, there is a perfect obstruction theory associated to the morphism $\epsilon_{\tau}: \mathscr{M}(X/B,\tau) \rightarrow \mathfrak{M}(\mathcal{X}/B,\tau)$, which facilitates virtual pullback in the sense of \cite{vpull}. Without further assumptions on $\tau$, $\mathscr{M}(X/B,\tau)$ is not even virtually of pure dimension. However, when $\tau$ is realizable, $\mathfrak{M}(\mathcal{X}/B,\tau)$ is pure dimension and reduced, see \cite{punc} Proposition $3.28$. In particular, $\mathscr{M}(X/B,\tau)$ can be equipped with a virtual fundamental class defined by: 
 \[[\mathscr{M}(X/B,\tau)]^{vir} := \epsilon_{\tau}^!([\mathfrak{M}(\mathcal{X}/B,\tau)]).\]
 
\section{Tropical lifts and stabilization maps}
Fix a realizable punctured tropical type $\tau = (G,\textbf{u},\pmb\sigma,\textbf{g})$, with corresponding stacks $\mathscr{M}_\tau := \mathscr{M}(X/B,\tau)$ and $\mathfrak{M}_{\tau} := \mathfrak{M}(\mathcal{X}/B,\tau)$ of stable and prestable log maps marked by $\tau$ respectively. The latter log stack is idealized log smooth, with a sheaf of ideals $\mathcal{I} \subset \mathcal{M}_{\mathfrak{M}_\tau}$ given by the canonical idealized log structure defined in \cite{punc} Definition $3.22$. Moreover, the stack contains an open substack $\mathfrak{M}_\tau^\circ$ consisting of families of punctured log maps $(C^\circ/S,f)$ of type $\tau$. The idealized log structure determines a subset of cones of the tropicalization of the moduli problem $\mathfrak{M}_\tau$ corresponding to non-empty strata in $\mathfrak{M}_{\tau}$. In this case, the requirement for non-emptiness is that the cone $\tau' \in \Sigma(\mathfrak{M}_{\tau})$ must contain $\tau$ as a face.

Now suppose we have a log \'etale modification $\pi: \tilde{X} \rightarrow X$. After recalling notation in the remark following Lemma \ref{equiv}, note that when $X$ is log smooth, by \cite{bir_GW} Corollary $2.6.7$, there is a subdivision of Artin fans $\widetilde{\mathcal{A}}_{X} \rightarrow \mathcal{A}_X$ such that $\pi$ is induced by the following pullback of log algebraic stacks:

\begin{equation}\label{mod}
\begin{tikzcd}
\tilde{X} \arrow{r} \arrow[d,"\pi"] & \tilde{\mathcal{X}} \arrow{d} \arrow{r} & \widetilde{\mathcal{A}}_X \arrow{d} \\
X \arrow{r} & \mathcal{X} \arrow{r} \arrow{d} & \mathcal{A}_X \arrow{d} \\
& B \arrow{r} & \mathcal{A}_B
\end{tikzcd}
\end{equation}

In the above diagram, the object $\tilde{\mathcal{X}}$ is defined so that the top right square is cartesian. Note in particular that $\widetilde{\mathcal{A}}_X$ is also an Artin fan, and $\tilde{X} \rightarrow \tilde{\mathcal{X}}$ is strict. In general, we will assume $\pi$ is induced by pulling back a subdivision of Artin fans. By this description, note that $\tilde{X} \rightarrow X$ is log smooth, hence $\tilde{X} \rightarrow B$ is log smooth. Since $\tilde{X} \rightarrow X$ is log \'etale, $T^{log}_{\tilde{X}/B} \cong \pi^*(T^{log}_{X/B})$.

In order to compare the log Gromov-Witten theory of $X$ to the log Gromov-Witten theory of $\tilde{X}$, we define a class of tropical types of maps to $\Sigma(\tilde{X})$ determined by the tropical type $\tau$:

\begin{definition}\label{dtlift}
Suppose we have realizable tropical types $\gamma$ and $\tau$ of punctured tropical maps to $\Sigma(\tilde{X})$ and $\Sigma(X)$ respectively, together with an inclusion of cones $i: \gamma \rightarrow \tau$ with $int(\gamma) \subset int(\tau)$. Then $\gamma$ is a tropical lift of $\tau$ if there exists puncturing of the family of tropical curves $\overline{\Gamma}^\circ\subset \overline{\Gamma}_{\gamma}$ over the cone $\gamma$ induced by $i$ which is contained in the domain of definition of the punctured tropical map $(\overline{\Gamma}_{\gamma}^{\circ}/\gamma,f)$ with target $\Sigma(X)$ induced by $i$ such that the universal family of punctured tropical maps over $\gamma$ is given by $\overline{\Gamma}^\circ \times_{\Sigma(X)} \Sigma(\tilde{X}) \rightarrow \Sigma(\tilde{X})$ in the category of rational polyhedral cone complexes.

A decorated tropical type $\pmb\gamma$ is a \emph{decorated lift} of a decorated tropical type $\pmb\tau$ if the underlying tropical type $\gamma$ is a tropical lift of the underlying type $\tau$, and after applying the morphism $st_*:H_2(\tilde{X}) \rightarrow H_2(X)$ to the decoration of $\gamma$, the resulting decorated tropical type of map to $\Sigma(X)$ is marked by $\pmb\tau$. 
\end{definition}

\begin{figure}[h]
\centering
\begin{tikzpicture}
\fill[white!50!blue, path fading = north]  (0,0)--(4,0)--(4,4)--(0,4)--cycle;
\draw[black] (0,0)--(4,0);
\draw[black] (0,0)--(0,4);
\draw[ball color = red] (3/2,3/2) circle (0.5mm);
\draw[->, color = red] (3/2,3/2)--(3/2,4);
\draw[->, color = red] (3/2,3/2)--(4,3/2);
\draw[->, color = red] (3/2,3/2)--(0,0);

\fill[white!70!blue, path fading = north] (5,0)--(9,0)--(9,4)--(5,4)--cycle;
\draw[black] (5,0)--(9,0);
\draw[black] (5,0)--(5,4);
\draw[black] (5,0)--(9,4);
\draw[ball color = red] (13/2,3/2) circle (0.5mm);
\draw[->, color = red] (13/2,3/2)--(13/2,4);
\draw[->, color = red] (13/2,3/2)--(9,3/2);
\draw[->, color = red] (13/2,3/2)--(5,0);
\draw[ball color = red] (15/2,2) circle (0.5mm);
\draw[->, color = red] (15/2,2)--(15/2,5/2);
\draw[->, color = red] (15/2,2)--(9,2);
\draw[->, color = red] (15/2,2)--(11/2,0);
\draw[ball color = red] (14/2,3) circle (0.5mm);
\draw[->, color = red] (14/2,3)--(14/2,4);
\draw[-,color = red] (14/2,3)--(8,3);
\draw[ball color = red] (8,3) circle (0.5mm);
\draw[->, color = red] (8,3)--(9,3);
\draw[->, color = red] (14/2,3)--(5,1);

\end{tikzpicture}
\caption{A graphical depiction of choices of tropical lift. The left hand graphic is a tropical map to $\Sigma(X)$ with associated tropical type $\tau$, and the right hand graphic depicts tropical maps to a subdivision $\Sigma(\tilde{X})$ of $\Sigma(X)$ whose tropical types are various choices of tropical lift of $\tau$. These choices includes various constraints on the tropical modulus, as well as a choice of leg length.} 
\end{figure}

To produce tropical lifts of the type $\tau$ to the modification, we first investigate the above setup after tropicalization of Diagram \ref{mod} and the relevant moduli stacks of log stable maps.

Consider the universal family of tropical maps of type $\tau$, diagrammatically given below:

\[\begin{tikzcd}
\tilde{\Gamma}_{\tau} \arrow{d} & \tilde{\Gamma}_{\tau}^\circ \arrow{l} \arrow {r} \arrow[d,"m"] & \Sigma(\tilde{X}) \arrow{d}\\
\Gamma_{\tau}\arrow{dr} & \Gamma_\tau^\circ\arrow{l} \arrow{r} \arrow{d} & \Sigma(X) \arrow{d} \\
&\tau \arrow{r} & \Sigma(B)
\end{tikzcd}\]

In the above diagram, we pull back the subdivision $\Sigma(\tilde{X}) \rightarrow \Sigma(X)$ along $\Gamma_\tau^\circ \rightarrow \Sigma(X)$ to produce the vertical arrow $m: \tilde{\Gamma}_{\tau}^\circ \rightarrow \Gamma_{\tau}$.  As $\Gamma_\tau^\circ$ is a subcomplex of $\Gamma_\tau$, we extend the subdivision $\tilde{\Gamma}_{\tau}^\circ \rightarrow \Gamma_\tau^\circ$ to a subdivision $\tilde{\Gamma}_\tau \rightarrow \Gamma_\tau$ which includes $\tilde{\Gamma}_{\tau}^\circ$ as a union of cones. Now observe, as in \cite{ssred} Proposition $4.4$, that we may push forward the induced subdivision of $\Gamma_\tau$ to produce a subdivision $\widetilde{\tau} \rightarrow \tau$. Letting $\pi: \tilde{\Gamma}_\tau \rightarrow \tau$ be the induced morphism of cone complexes, the cones of $\tilde{\tau}$ are given by:

\[\{\cap_i \pi(\sigma_i) \mid \sigma_i \in \tilde{\Gamma}_\tau\}.\]

Note that we do not necessarily have a morphism $\tilde{\Gamma}_\tau \rightarrow \tilde{\tau}$. However, by taking a cone $\omega \in \tilde{\tau}$ and pulling back $\widetilde{\Gamma}^\circ_{\tau} \rightarrow \tau$ and $\widetilde{\Gamma}_\tau \rightarrow \tau$ along $\omega \rightarrow \tau$, we produce morphisms of cone complexes $p:\widetilde{\Gamma}_{\omega} \rightarrow \omega$ and $p^\circ: \widetilde{\Gamma}_{\omega}^\circ \rightarrow \omega$, illustrated in the cartesian squares of cone complexes below:

\[\begin{tikzcd}
\widetilde{\Gamma}_{\omega} \arrow{r}\arrow[d,"p"] & \widetilde{\Gamma}_{\tau} \arrow[d]& & \widetilde{\Gamma}_{\omega}^\circ \arrow{r}\arrow[d,"p^\circ"] & \widetilde{\Gamma}_{\tau}^\circ \arrow[d]\\
\omega \arrow{r} & \tau & & \omega \arrow{r} & \tau
\end{tikzcd}.\]
 Note that the morphism $p$ satisfies the condition that all cones of $\widetilde{\Gamma}_\omega$ which do not map into a proper face of the base cone $\omega$ surject onto $\omega$. We let $k = dim\text{ }\omega$.

We will now show that after an appropriate lattice refinement on the base, the morphism $(\tilde{\Gamma}_\omega^\circ /\omega,f^\circ:\tilde{\Gamma}_{\omega}^\circ \rightarrow \Sigma(\tilde{X}))$ is a family of tropical maps:

\begin{proposition}\label{tcurve}
For $\omega \in \tilde{\tau}$ as above, there exists a coarsening of the lattice of $\omega$ such that $\tilde{\Gamma}_\omega^\circ \rightarrow \Sigma(\tilde{X})$ is a family of punctured tropical maps to $\Sigma(\tilde{X})$ for some tropical type $\gamma$ of punctured tropical map to $\Sigma(\tilde{X})$. Moreover, the induced morphism from the subcomplex of cones $\omega$ parameterizing families of tropical maps of type $\gamma$ to $\gamma$ is injective.
\end{proposition}
\begin{proof}
First consider the family of tropical curves $\Gamma_{\omega} \rightarrow \omega$ pulled back along $\omega \rightarrow \tau$. Let $e \in E(G_\tau)\cup L(G_\tau)$ be an edge or leg containing a vertex $v \in V(G_\tau)$. We have a corresponding cone $\omega_v \in \Gamma_\omega$ surjecting onto the base cone $\omega$. Moreover, we have a map $\omega_e \rightarrow \Sigma(X)$, and a subdivision $\tilde{\omega}_e \rightarrow \omega_e$. By construction, any cone of $\tilde{\omega}_e$ whose image in $\omega$ is not contained in a face of $\omega$ must surject onto $\omega$. Letting $\sigma \in \tilde{\omega}_e$ be one such cone, consider the linear map of associated real vector spaces $\sigma^{gp} \rightarrow \omega^{gp}$. Since $\sigma$ can be identified with a subcone of $\omega_{e}$, we have $\sigma^{gp} \subset \omega_e^{gp} = \omega^{gp} \times \mathbb{R}$, and the morphism $\sigma^{gp} \rightarrow \omega^{gp}$ is induced by restriction of the projection morphism. Since $\sigma^{gp} \rightarrow \omega^{gp}$ is surjective, we must have dim $\sigma^{gp} \ge $ dim $\omega^{gp}$, i.e. dim $\sigma = $ dim $\omega +1$ or dim $\omega$. In the latter case, the linear morphism $\sigma^{gp} \rightarrow \omega^{gp}$ is an isomorphism, and thus the surjective morphism of cones $\sigma \rightarrow \omega$ is injective, hence an isomorphism. After composing the inverse of this isomorphism with the inclusion $\sigma \rightarrow \omega \times \mathbb{R}_{\ge 0}$, the image of this morphism is given by the graph of a non-negative linear function on $\omega$, which we call $\rho_\sigma$. Since the subdivision $\Sigma(\tilde{X}) \rightarrow \Sigma(X)$ is rational, the linear function $\rho_\sigma$ above is rational. Suppose we have $\sigma'$ is a distinct cone in $\tilde{\omega}_e$ of dimension $dim\text{ }\omega$. Then the projection $\sigma' \rightarrow \omega$ is also an isomorphism, and we have an associated linear map $\rho_{\sigma'} \in \omega^{\vee}_\mathbb{Q}$. Note that cones in $\tilde{\omega}_e$ may only intersect on faces. Hence, the linear map $\rho_{\sigma} - \rho_{\sigma'}$ cannot be zero anywhere in $int(\sigma)$. By the intermediate value theorem and convexity of $\omega$, we must have either $\rho_\sigma < \rho_{\sigma'}$ or $\rho_{\sigma'} < \rho_{\sigma}$ on $int(\omega)$. We note by construction of $\omega \subset \tau$, for $e \in L(G_\tau)$ a strictly punctured leg of $\tau$, there is a unique codimension $1$ cone $\omega_{v_b}$  of $\tilde{\omega}_e$ whose image in $\Gamma_\omega$ is contained in the closure of the proper image of $\Gamma_{\omega}^\circ$ but not the proper image of $\Gamma_{\omega}^\circ$.

Now let $\sigma \in \tilde{\omega}_e$ be a cone of dimension $k+1$. Then $\sigma$ has at least one proper face which surjects onto $\omega$. If it has at least two such faces, denote them by $\sigma_{v_1}$ and $\sigma_{v_2}$, and suppose $\rho_{\sigma_{v_1}} \le \rho_{\sigma_{v_2}}$. Then $\sigma$ must be a subcone of the cone:
  
\[\{(g,t) \in \omega \times \mathbb{R}_{\ge 0}\text{ }| \text{ } \rho_{\sigma_{v_1}}(g) \le t \le \rho_{\sigma_{v_2}}(t)\}.\]
Since $\sigma$ contains $\sigma_{v_1}$ and $\sigma_{v_2}$, by convexity, $\sigma$ must equal this cone. 
  
 We now define a tropical type $\gamma$ as follows. We let $V(G_\gamma)$ be in bijection with the $k$ dimensional cones of $\tilde{\omega}_e$ which surject onto $\omega$, as $e$ ranges over $E(G_\tau)\cup L(G_\tau)$, and $E(G_\gamma)\cup L(G_\gamma)$ be in bijection with the dim $k+1$ cones of $\tilde{\omega}_e$ as $e$ ranges over $E(G_\tau) \cup L(G_\tau)$. We say $e \in E(G_\gamma)$ if the corresponding cone has two $k$-dimensional faces, and $e \in L(G_\gamma)$ if it only contains one such face. The incidence relation is defined by sending a dimension $k+1$ cone to the dimension $n$ cones in $V(G_\gamma)$ featured as a face. It is straightforward to see that this gives a subdivision of the graph $G_\tau$. For $e \in E(G_\gamma)$, containing vertices $v_1,v_2$ with $\rho_{v_1} \le \rho_{v_2}$ we define $l_e = \rho_{v_2} - \rho_{v_1}$. It is straightforward to see that $\tilde{\Gamma}_{\omega} \rightarrow \omega$ is a family of tropical curves with associated dual graph $G_\gamma$ and edge lengths $l_e$ with \emph{rational} edge lengths. We note that it is possible that some cones associated with vertices $v \in V(G_{\gamma})$ exist such that $m(\omega_v)$ are not contained in the proper image of $i:\Gamma_{\tau}^\circ \rightarrow \Gamma_{\tau}$, defined in Definition \ref{ptropcurve}. Since these are precisely the vertices $v \in V(G)$ with $\rho_{\omega_v} \ge \rho_{\omega_{v_b}}$, these cones additionally do not intersect the proper image of $i$. By merging cones appropriately, corresponding to removing vertices and identifying edges of $G_{\gamma}$, we produce a family of tropical curves such that no such vertex exists. In particular, after this merging, we still have have the subdivision $\tilde{\Gamma}_{\omega}^\circ \rightarrow \Gamma_{\omega}^\circ$ is induced by pulling back $\tilde{\Gamma}_{\omega} \rightarrow \Gamma_{\omega}$. Since there are finitely many edges in $G_\gamma$, there exists a finite index subgroup of $\omega_\NN^{gp}$ which when intersected with $\omega \subset \omega_{\mathbb{R}}^{gp}$ consists of the points $p \in \omega_\NN$ such that $l_e(p) \in \NN$ for all $e \in E(G_\gamma)$. It is straightforward to see that $\tilde{\Gamma}_{\omega} \rightarrow \omega$ is a family of tropical curves with associated dual graph $G_\gamma$ and edge lengths $l_e$.
 
Moreover, $\tilde{\Gamma}^\circ_\omega \rightarrow \tilde{\Gamma}_\omega$ is a puncturing of this family. Indeed, this follows from the fact that the subdivision $\tilde{\omega}^{\circ}_l \rightarrow \omega^\circ_l$ is given by pulling back the subdivision $\tilde{\omega}_l \rightarrow \omega_l$ for every leg $l \in L(G_\omega)$, and that cone of $\tilde{\omega}_l$ associated with the corresponding leg of $G_{\gamma}$ contains a unique cone of $\tilde{\omega}_l^\circ$ not as a face by construction. Additionally by construction, we have a morphism $\tilde{\omega}_l^\circ \rightarrow \Sigma(\tilde{X})$ lifting $\omega_l^\circ \rightarrow \Sigma(X)$. For $x \in V(G_\gamma) \cup E(G_\gamma) \cup L(G_\gamma)$, we set $\pmb\sigma(x)$ to be the minimal cone of $\Sigma(\tilde{X})$ containing the image of the cone associated to $x$. For an edge $e' \in E(G_\gamma)\cup L(G_\gamma)$ subdividing an edge $e \in E(G_\tau)\cup L(G_\tau)$, we may identify $\pmb\sigma(e')^{gp}$ with $\pmb\sigma(e)^{gp}$, and we define $\textbf{u}(e') = \textbf{u}(e)$. It is now immediate that $\tilde{\Gamma}^\circ \rightarrow \Sigma(\tilde{X})$ is a punctured tropical map over $\omega$ of tropical type $\gamma$.

For the injection, note that since each $\tilde{\Gamma}_{\tau}^\circ|_{\omega}$ is a family of tropical type of type $\gamma$, we have a classifying morphism $\omega \rightarrow \gamma$. Additionally, there is a morphism $\gamma \rightarrow \tau$ which has a modular interpretation given by considering the universal map $\Gamma_\gamma^\circ \rightarrow \Sigma(X)$ forgetting vertices subdividing edges of $G_\tau$, and extending the legs of $\Gamma^\circ_\omega$ so that the resulting tropical family is prestable. The resulting morphism $\gamma \rightarrow \tau$ is clearly injective, and its image contains the union of the cones $\omega \subset \tau$ with associated tropical type $\gamma$. The composite map of the two maps considered above from the previous union of subcones of $\tau$ to $\tau$ is clearly the inclusion, and the desired injection statement follows.
\end{proof}

\begin{remark}

By the previous proposition, after merging pairs of cones of $\tilde{\tau}$ sharing codimension $1$ faces with same associated tropical types, the resulting subdivision $\tilde{\tau} \rightarrow \tau$ now only depends on the subdivision $\tilde{\Gamma}_{\tau}^\circ \rightarrow \Gamma_{\tau}^\circ$, given by the coarsest subdivision of $\tau$ such that for any cone $\omega \subset \tilde{\tau}$, we have $\tilde{\Gamma}_{\omega}^\circ \rightarrow \Sigma(\tilde{X})$ is a family of tropical curves of a fixed type. Moreover, the families of tropical curves $\tilde{\Gamma}_{\omega}$ defined above each pullback from the unpunctured curve $\Gamma_{\gamma}$ over a subcone of $\gamma$.

\end{remark}

We note that by construction, the tropical types $\gamma$ produced in Proposition \ref{tcurve} are tropical lifts of $\tau$. After conducting the merging of the cones of $\tilde{\tau}$ described in the remark above, for future purposes, we consider how we may produce all remaining tropical lifts $\omega$ of $\tau$ from the subdivision $\tilde{\tau}$ produced above. First, we call the types corresponding to cones of $\tilde{\tau}$ \emph{maximally extendable} tropical lifts. To produce all remaining tropical lifts, we note that for each maximally extendable tropical type $\omega$ produced above and each leg $l \in L(G_\tau)$, there exists a total ordering on the cones $\omega_e \subset \Gamma_{\omega}$ on the edges $e \in L(G_\omega)\cup E(G_\omega)$ which map to $l$ under the subdivision morphism $G_\omega \rightarrow G_\tau$. Each choice of edge or leg $e$ above produces a family of tropical curves tropical in which we remove the edges and legs $e'$ such that $e' > e$. This family has a fixed tropical type $\omega'$ and in this situation, we say that $\omega'$ is produced from the maximally extendable tropical type $\omega$, which we denote by $\omega \ge \omega'$.

\begin{figure}[h]
\centering
\begin{tikzpicture}
\fill[white!80!blue, path fading = north] (0,0)--(4,0)--(0,4)--cycle;
\draw[black, dashed] (0,0)--(4/3,8/3);
\draw[black, dashed] (0,0)--(8/3,4/3);
\draw[ball color = red] (1/2,0) circle (0.5mm);
\draw[ball color = red] (1/2,1/4) circle (0.5mm);
\draw[ball color = red] (1/2,1) circle (0.5mm);
\draw[color = red] (1/2,0)--(1/2,1/4);
\draw[color = red] (1/2,1/4)--(1/2,1);
\draw[->,color = red] (1/2,1)--(1/2,7/2);

\draw[ball color = violet] (1,0) circle (0.5mm);
\draw[ball color = violet] (1,1/2) circle (0.5mm);
\draw[color = violet] (1,0)--(1,1/2);
\draw[->,color = violet] (1,1/2)--(1,2);

\draw[ball color = blue] (3/2,0) circle (0.5mm);
\draw[->, color = blue] (3/2,0)--(3/2,3/4);
\end{tikzpicture}
\caption{Different choices of puncturing of a leg of $\tau$ mapping to the depicted cone giving different tropical types. The leftmost depicted leg is maximally extended.}
\end{figure}

Now given a tropical lift $\gamma$ of $\tau$, consider the subcomplex $\tilde{\gamma}$ of $\tilde{\tau}$ consisting of faces of cones with associated tropical types $\omega$ such that $\omega \ge \gamma$. Since the edge lengths of $\tilde{\Gamma}_{\omega}$ take values in $\omega_{\mathbb{Q}}^{\vee}$ and not typically in $\omega_{\NN}^{\vee}$, they do not in general define a family of tropical curves as defined in Section $2$. However, since there are finitely many edges in $G_\omega$, there exists a maximal finite index sublattice of $\omega_\NN^{gp}$, whose points in the cone $\omega \subset \omega^{gp}$ correspond to tropical curves with edge lengths in $\omega_{\NN}^{\vee}$. 

The construction of the families of tropical curves above give morphisms $\omega \rightarrow \gamma$ of cones. We note in the following lemma that these morphisms identify $\tilde{\gamma}$ with a subdivision of $\gamma$. Specifically, $\tilde{\gamma}$ is the cone complex giving the moduli space of maximally extended lifts $\omega$ of $\tau$ such that $\omega \ge \gamma$.

\begin{lemma}\label{tlemma}
There exists a morphism $\tilde{\gamma} \rightarrow \gamma$ identifying $\tilde{\gamma}$ as a subdivision of $\gamma$.
\end{lemma}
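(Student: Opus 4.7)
The plan is to construct the morphism cone-by-cone using the universal property of $\gamma$, and then verify that what results is a subdivision. For each cone $\omega$ of $\tilde{\gamma}$, Proposition \ref{tcurve} supplies a tropical lift---also called $\omega$---which by definition of $\tilde{\gamma}$ satisfies $\omega \geq \gamma$ in the partial ordering on lifts of $\tau$. Unwinding this ordering, $G_\omega$ differs from $G_\gamma$ only in that certain legs of $G_\gamma$ are further subdivided into paths or extended by additional edges. I would take the universal family $\widetilde{\Gamma}^\circ_\omega \to \Sigma(\tilde{X})$ of type $\omega$, apply the forgetful/stabilization operation used at the end of the proof of Proposition \ref{tcurve} (contracting the extra interior vertices and shortening the extra legs so the result is prestable and punctured) to produce from it a family of type $\gamma$ over $\omega$, and invoke the universal property of $\gamma$ to obtain a classifying morphism $f_\omega\colon \omega \to \gamma$.

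Next I would check that these morphisms glue. For a face $\omega' \subset \omega$ in $\tilde{\gamma}$, the pullback of the family of type $\omega$ along $\omega' \hookrightarrow \omega$ agrees with the family of type $\omega'$ (possibly after a prestabilization absorbed by the forgetful operation), and this operation commutes with pullback. Functoriality of the universal property then gives $f_\omega|_{\omega'} = f_{\omega'}$, and the individual morphisms assemble into a morphism of generalized cone complexes $\tilde{\gamma} \to \gamma$.

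It remains to verify that this morphism realizes $\tilde{\gamma}$ as a subdivision of $\gamma$. For injectivity of each $f_\omega$, note that the classifying morphism $\omega \to \tau$ constructed in Proposition \ref{tcurve} factors as $\omega \to \gamma \to \tau$ with the first arrow $f_\omega$ and the second arrow the face inclusion from Definition \ref{dtlift}; since the composite is an isomorphism onto its image and the second arrow is injective, $f_\omega$ is injective. For surjectivity onto $\gamma$ as a set, take a point $p \in \gamma$ and let $\bar p \in \tau$ denote its image. Since $\tilde{\tau}$ subdivides $\tau$, some cone $\omega_0 \in \tilde{\tau}$ contains a preimage of $\bar p$; the key observation is that the tropical lift encoded by $\omega_0$ must dominate $\gamma$ in the partial ordering, because $\bar p$ sits in the face $\gamma$ of $\tau$ and the puncturing levels dictated by $\omega_0$'s position over this face are at least as fine as those defining $\gamma$. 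Hence $\omega_0 \in \tilde{\gamma}$ and $p \in f_{\omega_0}(\omega_0)$. Compatibility of lattices then follows because the lattice on $\omega$ built in Proposition \ref{tcurve} is the maximal sublattice on which all edge-length functions for $G_\omega$ take integer values, and the edge-length functions for $G_\gamma$ form a subset of these.

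The main obstacle is the surjectivity step, specifically verifying $\omega_0 \geq \gamma$. This requires carefully tracking how the partial ordering on tropical lifts interacts with the geometric position of cones of $\tilde{\tau}$ relative to the face $\gamma \subset \tau$: fixing $p$ in the interior of $\gamma$ pins down, for each leg of $G_\tau$, a particular maximal level of puncturing compatible with the subdivision $\Sigma(\tilde{X}) \to \Sigma(X)$, and one must argue this level is at least as fine as that prescribed by $\gamma$. This is combinatorial in nature and should follow by unwinding the construction of the partial ordering given just before the statement of the lemma, together with the way Proposition \ref{tcurve} associates a unique maximal lift to each top-dimensional cone of $\tilde{\tau}$.
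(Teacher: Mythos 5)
The modular definition of $f_\omega\colon\omega\to\gamma$ you give matches the paper's, and your injectivity argument — factoring $\omega\hookrightarrow\tau$ (the subdivision cone inclusion coming from Proposition \ref{tcurve}) through $\omega\to\gamma\to\tau$, so that injectivity of the composite forces injectivity of $\omega\to\gamma$ — is a valid alternative to what the paper does. However, there are two substantive problems.

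First, a recurring conceptual error: you repeatedly call $\gamma\to\tau$ a ``face inclusion.'' It is not. By Definition \ref{dtlift}, a tropical lift comes equipped with an inclusion of cones $\gamma\to\tau$ satisfying $\operatorname{int}(\gamma)\subset\operatorname{int}(\tau)$, which for $\gamma\neq\tau$ is precisely the \emph{opposite} of a face inclusion (faces meet the interior only when they are the whole cone). You may be confusing ``lift'' with ``marking'' — markings in the sense of \cite{punc} Definition 3.7 do identify one cone with a face of another, but tropical lifts do not. Your injectivity argument survives this slip because all it actually needs is that $\gamma\to\tau$ is injective, but the slip actively misleads you in the surjectivity step, where you try to reason about ``$\omega_0$'s position over this face.''

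Second, and more seriously, the surjectivity argument is not complete — as you yourself flag. You locate a cone $\omega_0\in\tilde\tau$ containing $p$ and assert as a ``key observation'' that $\omega_0\geq\gamma$, but you give no argument for this; the reason you offer (that $\bar p$ lies in a face and ``the puncturing levels dictated by $\omega_0$'s position over this face are at least as fine as those defining $\gamma$'') is both premised on the incorrect face picture and not actually a proof. The paper handles both injectivity and surjectivity in one stroke via a \emph{reconstruction} argument you do not use: given a punctured tropical map of type $\gamma$ at a point $q$, one can uniquely reconstruct the maximally extended punctured tropical map (extend each shortened leg into $\Sigma(\tilde X)$ along its fixed direction, introducing a new vertex each time a wall is crossed, until it reaches its prestable extent). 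The existence of this extension shows every $q\in\operatorname{int}(\gamma)$ lies under some $\omega\geq\gamma$; its uniqueness shows the covering is without overlaps on interiors and that each $\omega\to\gamma$ is injective. That is the missing idea: the reconstruction — not the factorization through $\tau$ — is what pins down which $\omega_0$ contains $p$ and why it dominates $\gamma$.
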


\begin{proof}
Given $\omega \in \tilde{\gamma}$, note the induced morphism $i: \omega \rightarrow \gamma$ has the modular definition given by taking a punctured tropical map $\Gamma^\circ_q \rightarrow \Sigma(\tilde{X})$ associated with $q \in \omega$, and forgetting legs/edges associated with certain linear subgraphs of the dual graph $G_\omega$, producing a punctured tropical map $\Gamma^{'\circ}_{i(q)} \rightarrow \Sigma(\tilde{X})$ of type $\gamma$. Since at least one segment from each of the linear subgraphs remain in $G_\gamma$, we may uniquely reconstruct the punctured tropical map $\Gamma^\circ_q \rightarrow \Sigma(\tilde{X})$ with maximally extended leg from $\Gamma^{'\circ}_{i(q)} \rightarrow \Sigma(\tilde{X})$, implying $\omega \rightarrow \gamma$ is injective. The previous fact also makes clear that for every point $p \in int(\gamma)$, there is a cone $\omega$ of $\tilde{\tau}$ such that $p$ is in the image of $\omega \rightarrow \gamma$. Hence, the morphisms $\omega \rightarrow \gamma$ collectively surject, and $\tilde{\gamma} \rightarrow \gamma$ is a subdivision.
\end{proof}

\begin{figure}[h]
\centering
\begin{tikzpicture}

\node at (-3,2,0) {$\Sigma(\tilde{X})$};
\node at (-3,-2,0) {$\tilde{\tau}$};
\fill[white!60!violet, path fading = east] (-2,0,0)--(4,0,0)--(0,0,4)--cycle;
\fill[white!20!blue, path fading = east] (-2,0,0)--(4,0,0)--(2,4,2)--cycle;
\fill[white!40!blue, path fading = east] (-2,0,0)--(0,0,4)--(2,4,2)--cycle;
\fill[white!40!blue, path fading = east] (-2,0,0)--(4,0,0)--(4,6,0)--(-2,6,0)--cycle;
\fill[white!50!blue] (-2,0,0)--(2,4,2)--(2,6,2)--(-2,6,0)--cycle;
\fill[white!30!blue] (-2,0,0)--(0,0,4)--(0,6,4)--(-2,6,0)--cycle;

\fill[white!40!violet, path fading=east] (-2,-2,0)--(4,-2,0)--(0,-2,4)--cycle;
\draw[color=black] (-2,-2,0)--(2,-2,2);
\draw[color=black] (-2,0,0)--(4,0,0)--cycle;
\draw[color=black] (4,0,0)--(2,4,2);
\draw[color=black] (0,0,4)--(2,4,2);
\draw[color=black] (-2,0,0)--(0,0,4)--cycle;
\draw[color=black] (-2,0,0)--(2,4,2)--cycle;
\draw[color=black] (-2,0,0)--(-2,6,0);

\draw[->,color=red] (2,0,2)--(2,4,2)--cycle;
\draw[ball color=red] (2,0,2) circle (0.75mm);
\draw[ball color=red] (2,4,2) circle (0.75mm);
\draw[-,dashed] (-2,0,0)--(2,0,2)--cycle;
\draw[color=black] (2,4,2)--(2,6.5,2);
\draw[->,color=red] (2,4,2)--(2,6.25,2);
\end{tikzpicture}
\caption{An example of how a subdivision of the tropical target induces a subdivision of the tropical moduli problem. The subdivision of the target above is a barycentric subdivision of $\mathbb{R}_{\ge 0}^3$, and the tropical type $\tau$ has a associated cone $\mathbb{R}_{\ge 0}^2$ given below.}
\end{figure}

\begin{figure}[h]
\centering
\begin{tikzpicture}
\node at (-1,2) {$\Sigma(\tilde{X})$};
\node at (-1,-2) {$\gamma$};
\fill[white!70!blue, path fading = north] (0,0)--(4,0)--(0,4)--cycle;
\draw[black] (0,0)--(4/3,8/3);
\draw[black](0,0)--(4,0);
\draw[black](0,0)--(0,4);
\draw[ball color=green] (2,2) circle (0.5mm);
\draw[,color=red](2,2)--(1,2);
\draw[->,color=red](2,2)--(2,0);
\draw[->,color=red](1,2)--(0,2);
\draw[ball color=red](1,2) circle (0.5mm);

\fill[white!70!violet, path fading= north] (0,-4)--(4,-4)--(4/3,-4/3)--cycle;
\draw[color=black] (0,-4)--(4,-4);
\draw[color=black] (0,-4)--(4/3,-4/3);
\draw[ball color= green] (0,-4) circle (0.5mm);
\draw[ball color = green] (1,-4) circle(0.5mm);
\draw[ball color=green] (2,-4) circle (0.5mm);
\draw[ball color=green] (1,-2) circle (0.5mm);
\draw[ball color = green] (3,-4) circle (0.5mm);
\draw[ball color= green] (4,-4) circle (0.5mm);
\draw[color=black] (1,-3) circle (0.5mm);
\draw[color=black] (2,-3) circle (0.5mm);
\draw[ball color=green] (2,-2) circle (0.5mm);
\draw[color=black] (3,-3) circle (0.5mm);

\end{tikzpicture}
\caption{An example showing the necessity of lattice coarsening to ensure integer valued edge lengths after a subdivision. Non-filled circles indicate integral points in $\tau_\NN \setminus \gamma_{\NN}$, after identifying $\gamma$ as a subcone of $\tau$. }
\end{figure}

We now fix a tropical lift $\gamma$ of $\tau$, and let $m$ denote the order of the torsion part of the quotient $\tau_{\NN}^{gp} /\gamma_{\NN}^{gp}$. 

The inclusion of cones $\gamma \rightarrow \tau$ induces a morphism of Artin cones $i: \mathcal{A}_{\gamma} \rightarrow \mathcal{A}_{\tau}$. Both of these Artin cones are equipped with puncturing ideals, associated with the universal families of punctured tropical curves of type $\gamma$ and $\tau$, defined in \cite{punc} Definition $3.22$, which we denote by $\mathcal{J}$ and $\mathcal{I}$ respectively. In our case, since the tropical types in question are realizable, the simpler definition from \cite{punc} Proposition $3.23$ suffices. They are given in both cases by the ideal of non-zero elements of $\gamma^{\vee}_{\NN}$ and $\tau^{\vee}_{\NN}$ i.e., all elements of $\gamma^{\vee}_{\NN}$ and $\tau^{\vee}_{\NN}$ which induce strictly positive functions on $int (\gamma)$ and $int(\tau)$. Let $\mathcal{A}_{\gamma,\gamma}$ and $\mathcal{A}_{\tau,\tau}$ be the respective idealized Artin cones. Since the induced morphism $i^{\vee}: \tau_\NN^{\vee} \rightarrow \gamma_\NN^{\vee}$ is sharp, we have $i^*(\mathcal{I}) \subset \mathcal{J}$. Thus, we have an induced map $\mathcal{A}_{\gamma,\gamma} \rightarrow \mathcal{A}_{\tau,\tau}$. By \cite{punc} Lemma B.3, this morphism is idealized log \'etale.

The following lemma essentially implies the tropical analogues of Theorem \ref{mthm1} and \ref{mthm2}, and we will make use of this lemma in the proofs of Theorems \ref{mthm1} and \ref{mthm2} later on.

\begin{lemma}\label{DM}
Let $f: \Ag \rightarrow \At$ be a morphism of idealized Artin cones such that $f$ is the restriction of a map $f:\mathcal{A}_{\gamma'} \rightarrow \mathcal{A}_{\tau'}$ induced by an inclusion of cones $\gamma' \rightarrow \tau'$. Then $f$ is DM type. If dim $\tau = $dim $\gamma$, $\gamma' = \gamma$, and $\tau' = \tau$, then the degree of $f$ is  $\frac{1}{m}$, where $m = |coker (\gamma^{gp}_{\NN} \rightarrow \tau^{gp}_{\NN})|$

\end{lemma}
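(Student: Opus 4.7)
The plan is to realize $f$ concretely as a morphism of stack quotients of toric data, and then to compute stabilizers and fibers directly. Writing $\Ag = [\overline{O}_\gamma/T_{\gamma'}]$ and $\At = [\overline{O}_\tau/T_{\tau'}]$, where $\overline{O}_\gamma \subset S_{\gamma'}$ and $\overline{O}_\tau \subset S_{\tau'}$ are the torus-invariant orbit closures cut out by the respective idealized structures, the morphism $f$ is induced by the equivariant restriction $\overline{O}_\gamma \to \overline{O}_\tau$ of the toric morphism $S_{\gamma'} \to S_{\tau'}$, together with the torus homomorphism $\phi \colon T_{\gamma'} \to T_{\tau'}$ coming from the lattice map $\Lambda_{\gamma'} \hookrightarrow \Lambda_{\tau'}$ (equivalently $\gamma'^{gp} \hookrightarrow \tau'^{gp}$). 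Because an inclusion of cones is injective on the underlying lattices, the cokernel $C = \Lambda_{\tau'}/\Lambda_{\gamma'}$ is a finitely generated abelian group; a short $\mathrm{Tor}$-computation then shows $\ker\phi \cong \mu_{C_{\mathrm{tors}}}$ is the finite diagonalizable group scheme Cartier dual to the torsion part of $C$. This finiteness holds even when $\dim\gamma' < \dim\tau'$: the free rank of $C$ only makes the image of $\phi$ a proper subtorus of $T_{\tau'}$ and contributes nothing to the kernel.

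The DM-type assertion then follows from a direct stabilizer computation. Let $x$ be a geometric point of $\Ag$ lying over $\bar x \in \overline{O}_\gamma$; its automorphism group is $\mathrm{Stab}_{T_{\gamma'}}(\bar x) \le T_{\gamma'}$, and $f$ sends it to $\mathrm{Stab}_{T_{\tau'}}(\phi(\bar x)) \le T_{\tau'}$ via the restriction of $\phi$. Consequently the kernel of $\mathrm{Aut}(x) \to \mathrm{Aut}(f(x))$ is contained in $\ker\phi$ and is therefore finite, showing that $f$ is of DM type. The same conclusion can be reached via the preceding lemma: pull back $f$ along the smooth chart $\overline{O}_\tau \to \At$ to obtain a quotient stack in which the $T_{\gamma'}$-action on the translation factor $T_{\tau'}$ is free modulo $\ker\phi$, yielding the same finiteness of stabilizers.

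For the degree statement, specialize to $\gamma' = \gamma$, $\tau' = \tau$ with $\dim\gamma = \dim\tau$. The idealized log structure cuts the underlying schemes down to the closed torus orbits, which are just the origins of $S_\gamma$ and $S_\tau$, so $\Ag = BT_\gamma$, $\At = BT_\tau$, and $f$ identifies with $B\phi$. Equality of dimensions forces $C$ to be pure torsion of order $m$, so $\phi$ is a surjective isogeny of tori with kernel $\mu_m$. The geometric fiber of $B\phi$ is then $B\mu_m$ by the standard identification of fibers of classifying-stack morphisms for surjective group maps, and since $[B\mu_m]$ pushes forward to $\tfrac{1}{m}[\mathrm{pt}]$ in the rational Chow ring, we conclude $f_*[\Ag] = \tfrac{1}{m}[\At]$ as claimed. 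The principal subtlety I anticipate is correctly pinning down the orbit-closure and stack-quotient descriptions of the idealized Artin cones and verifying that the cone inclusion $\gamma' \hookrightarrow \tau'$ really sends the face $\gamma$ into the face $\tau$ (using the compatibility $i^*(\mathcal{I}) \subset \mathcal{J}$ recorded in the paragraph preceding the lemma); once this toric bookkeeping is in place, both parts reduce to the finite-kernel analysis of $\phi$ from the first paragraph.
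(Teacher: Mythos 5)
Your proposal is correct and follows essentially the same route as the paper: both realize the idealized Artin cones as quotients $[S_{\gamma',\gamma}/T_{\gamma'}] \rightarrow [S_{\tau',\tau}/T_{\tau'}]$, reduce everything to the finiteness of the kernel of the torus homomorphism $T_{\gamma'} \rightarrow T_{\tau'}$ computed from the cokernel of the lattice inclusion, and obtain the degree $\tfrac{1}{m}$ from the resulting gerbe over a point. The only cosmetic difference is that you check the relative inertia at geometric points directly, whereas the paper base-changes along the smooth chart $S_{\tau',\tau} \rightarrow \At$ and shows $[T_{\tau'}/T_{\gamma'}]$ is Deligne--Mumford; these are equivalent verifications of the same finiteness.
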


\begin{proof}

Write $\Ag = [S_{\gamma',\gamma}/T_\gamma]$ and $\At = [S_{\tau',\tau}/T_{\tau}]$. By \cite{toricglue} Corollary $A.3$, we have the following cartesian diagram in all categories:

\begin{equation}
\begin{tikzcd}
\left[ S_{\gamma',\gamma} \times T_{\tau'}/T_{\gamma'} \right]  \arrow{r}\arrow{d} & S_{\tau',\tau} \arrow{d}\\
\Ag \arrow{r}{f} & \At
\end{tikzcd}
\end{equation}

In the above diagram $S_{\tau',\tau} \rightarrow \At$ is the canonical smooth chart for $\At$, the action of $T_{\gamma'}$ on $S_{\gamma',\gamma} \times T_\tau$ is given on the factor $S_{\gamma',\gamma}$ by restricting the standard toric action on $S_{\gamma'}$, and is given on the factor $T_{\tau'}$ by the action induced by the group homomorphism $T_{\gamma'} \rightarrow T_{\tau'}$ induced by the toric map $S_{\gamma'} \rightarrow S_{\tau'}$. Since a morphism of stacks is DM type if it is DM after base changing along a smooth chart of the base by \cite{SP} Lemma 06TZ, it suffices to show the upper arrow is DM type, for which it suffices to show $[S_{\gamma',\gamma} \times T_{\tau'}/T_{\gamma'}]$ is Deligne-Mumford. Finally, we have a representable projection $[S_{\gamma',\gamma} \times T_{\tau'}/T_{\gamma'}] \rightarrow [T_{\tau'}/T_{\gamma'}]$. If $[T_{\tau'}/T_{\gamma'}]$ is Deligne-Mumford, the conclusion follows. To see the final quotient stack is Deligne-Mumford, note that the morphism $T_{\gamma'} \rightarrow T_{\tau'}$ is induced by the corresponding morphism of cocharacter lattices, $(\gamma')_{\mathbb{Z}}^{gp} \rightarrow (\tau')^{gp}_{\mathbb{Z}}$. By assumption, we know this is an injective morphism of finitely generated abelian groups. Thus, we have a short exact sequence:

\[0 \rightarrow (\gamma')^{gp}_{\mathbb{Z}} \rightarrow (\tau')^{gp}_{\mathbb{Z}} \rightarrow coker(i)_{tor} \oplus coker(i)_{free} \rightarrow 0\]

In the rightmost non-trivial object above, $coker(i)_{free}$ refers to the torsion free component of the cokernel of $i$, while $coker(i)_{tor}$ refers to the torsion part. Note that the torsion part is a finite abelian group. Since $char\text{ }\kk = 0$, taking the long exact sequence associated to tensoring with $k^*$ gives:

\[0 \rightarrow coker(i)_{tor} \rightarrow T_{\gamma'} \rightarrow T_{\tau'} \rightarrow T_{coker(i)_{free}} \rightarrow 0\]

Thus, the stabilizer of a geometric point $[T_{\tau'}/T_{\gamma'}]$ is isomorphic to $coker(i)_{tor}$, which is a finite reduced group. Hence, $[T_{\tau'}/T_{\gamma'}]$ is Deligne-Mumford, and after unwinding implications, $\Ag \rightarrow \At$ is DM type. 

For the second statement, observe $\mathcal{A}_{\tau,\tau} = \mathcal{A}_{\gamma,\gamma} = B\mathbb{G}_m^n$. Therefore, the degree is $deg\text{ }[T_{\tau'}/T_{\gamma'}]$, which we have seen equals $\frac{1}{|coker(i)_{tor}|} = \frac{1}{|coker(i)|}$ since the torsion-free part must be $0$ for dimension reasons and our assumption that $\gamma \rightarrow \tau$ is an inclusion of cones. 
\end{proof}

Having now fixed a tropical lift $\gamma$ of $\tau$, and established a tropical version of our main result, we seek to relate the two corresponding logarithmic moduli problems $\mathscr{M}(\tilde{X},\gamma)$ and $\mathscr{M}(X,\tau)$.

We start with a lemma describing the pushforward of the log structure of a family of punctured log curves through a partial stabilization map.

\begin{lemma}\label{plog}
Let $(C/S,f)$ be a prestable punctured log map to $X$ and $c: C^u \rightarrow \overline{C}^u$ be a partial stabilization of the underlying family of maps i.e., fiberwise a contraction of rational components with at most two special points. Then $\overline{C} \rightarrow S$ can be given the structure of a family of punctured log curves, with $c_*\mathcal{M}_{C^\circ} = \mathcal{M}_{\overline{C}^\circ}$, which is a puncturing of the log structure with sheaf of monoids $c_*\mathcal{M}_{C}$. Moreover, the factorization of $C \rightarrow X$ through the partial stabilization in the category of schemes lifts to a factorization in log schemes.

\end{lemma}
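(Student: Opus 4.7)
The plan is to work étale locally on $\overc$, reducing the global assertion to stalk computations of $c_*\mathcal{M}_C$ and $c_*\mathcal{M}_{C^\circ}$. Since $c$ is a local isomorphism away from the image of the contracted components, it suffices to analyze stalks at a geometric point $\bar p \in \overc$ whose fiber $c^{-1}(\bar p)$ is a nontrivial connected tree of rational components, each with at most two special points.

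I would first handle the non-punctured log structure. Étale locally, the fiber is a chain of rational components joined at nodes with smoothing parameters $e_1,\dots,e_k \in \overline{\mathcal{M}}_{S,\bar s}$ pulled back from the base. Using the pullback square describing sections of the log sheaf of monoids on such a chain (compatibility of local sections on each component under specialization at the intervening nodes), one computes that the stalk $(c_*\overline{\mathcal{M}}_C)_{\bar p}$ is generated over $\overline{\mathcal{M}}_{S,\bar s}$ by the sections on the at most two branches of $\overc$ meeting at $\bar p$, together with a single new edge generator whose smoothing parameter is $e_1+\cdots+e_k$. This matches the local model of a prestable log curve over $S$ at a node (respectively, a smooth or marked point, depending on the combinatorics), so $(c_*\mathcal{M}_C,\alpha)$ equips $\overc/S$ with a prestable log curve structure.

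For the punctured structure, I would invoke the local description of $\mathcal{M}_{C^\circ}$ at a punctured section from the prestability condition of \cite{punc} Definition 2.14, in which $\mathcal{M}_{C^\circ}$ is obtained from $\mathcal{M}_C$ by adjoining a single element $s \in \mathcal{M}_C^{gp}$ corresponding to the contact order of the puncture, subject to $\alpha_{C^\circ}(s)=0$. When such a puncture lies on a contracted component, its image in $(c_*\mathcal{M}_C)^{gp}$ at $\bar p$ gives an element which by prestability is annihilated by the structure morphism to $\mathcal{O}_{\overc,\bar p}$. Adjoining these pushed-forward puncturing elements to $c_*\mathcal{M}_C$ produces a fine submonoid of $(c_*\mathcal{M}_C)^{gp}$ satisfying the puncturing axioms, and one verifies by direct stalk calculation that this submonoid coincides with $c_*\mathcal{M}_{C^\circ}$. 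Hence $c_*\mathcal{M}_{C^\circ}$ is a puncturing of $c_*\mathcal{M}_C$ along the image of the punctured sections.

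For the final assertion, the hypothesis that $f$ factors through $\overc^u$ in schemes means that $f^*\mathcal{M}_X$ is canonically the $c$-pullback of a log structure $\mathcal{N}$ on $\overc^u$. The adjunction between $c^*$ and $c_*$ on sheaves of monoids, combined with the identification $c_*\mathcal{M}_{C^\circ} = \mathcal{M}_{\poverc}$, then converts the structure morphism $f^\flat: f^*\mathcal{M}_X \to \mathcal{M}_{C^\circ}$ into a morphism $\mathcal{N} \to \mathcal{M}_{\poverc}$, providing the desired log factorization. The main obstacle will be the local calculation in the middle paragraphs: one must verify that the pushforward sheaf is fine (and not merely integral), that the structure map to $\mathcal{O}_{\overc}$ is well defined and agrees with the expected one, and that the contact order data on the punctured legs interacts with the chain of contracted components in a way that precisely reproduces the puncturing axioms on $\overc$.
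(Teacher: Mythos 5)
Your overall architecture matches the paper's: a stalkwise computation of $c_*\mathcal{M}_C$ and $c_*\mathcal{M}_{C^\circ}$ at a point $y$ of $\overc$ lying under a contracted chain (the paper does this via the long exact sequence for $0\to\mathcal{O}^\times\to\mathcal{M}_{C^\circ}\to\overline{\mathcal{M}}_{C^\circ}\to 0$, identifying $R^1c_*\mathcal{O}^\times$ with $\mathrm{Pic}(c^{-1}y)\cong\ZZ^{|V|}$ and the connecting map with the sum of outgoing slopes, which is the precise form of your ``compatibility under specialization at the intervening nodes''), followed by adjunction of $(c_*,c^*)$ for the factorization. However, there is a genuine gap at the central step: the verification that $c_*\mathcal{M}_{C^\circ}$ actually satisfies the puncturing axioms on $\overc$. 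You assert that a puncturing element ``by prestability is annihilated by the structure morphism to $\mathcal{O}_{\overc,\bar p}$,'' but prestability does not supply this. What the puncturing axiom upstairs gives you is vanishing of $\alpha_{C^\circ}(s)$ in the stalk at the punctured point $y'$, i.e.\ on a neighborhood of $y'$ in $C$; the stalk $\mathcal{O}_{\overc,y}=(c_*\mathcal{O}_C)_y$ consists of functions on a neighborhood of the \emph{entire} contracted chain, and over a non-reduced base a function vanishing near $y'$ need not vanish on the other components of the chain. Moreover the $\mathcal{M}\oplus_{\mathcal{O}^\times}\mathcal{P}$ decompositions upstairs and downstairs differ (the marking generator at $y$ maps to the marking generator at $y'$ plus the sum $\rho$ of the node parameters, cf.\ equation (3.2)), so the second puncturing axiom downstairs is not literally the pushforward of the one upstairs.

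The mechanism the paper uses, and which your sketch is missing, is the idealized log structure on $S$: for $m\in\mathcal{M}_{\poverc,y}\setminus\mathcal{M}_{\overc,y}$ with ghost image $(\overline{m}_1,\overline{m}_2)\in\gamma^\vee_\NN\oplus\ZZ$, $\overline{m}_2<0$, one observes that $\overline{m}_1$ lies in the puncturing ideal $\mathcal{I}\subset\overline{\mathcal{M}}_S$, so any lift $m_1$ satisfies $\alpha_S(m_1)=0$; writing $m\cdot m_2^{-1}=u\,m_1$ with $m_2^{-1}$ a lift of the \emph{positive} power $(0,-\overline{m}_2)$ of the coordinate at $y$, one gets $\alpha(m)\cdot x^{-\overline{m}_2}=0$ in $\mathcal{O}_{\overc,y}$, and since $y$ is a smooth (marked) point of $\overc/S$ the coordinate $x$ is a non-zero-divisor, forcing $\alpha(m)=0$. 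Without some version of this two-step argument (base ideal kills $\overline{m}_1$, then cancel the non-zero-divisor), the claim that the pushforward is a puncturing does not close. You flag this calculation as ``the main obstacle'' but do not indicate the input (the idealized structure on $S$) that makes it go through, so as written the proof is incomplete precisely at the point the lemma is about.
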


\begin{proof}
This argument runs similar to \cite{stacktrop} Lemma $8.8$, but we must attend to some remaining details coming from the puncturing. The proof that both $\mathcal{M}_{\overline{C}}$ and $\mathcal{M}_{\overline{C}^\circ}$ are log structures is exactly as in \cite{stacktrop} Lemma $8.8$, and since $\overline{C} \rightarrow S$ is proper with connected geometric fibers, we only need to understand the local structure around a geometric point of $\overline{C}$ over a geometric point of $s \in S$. Thus, we assume $S$ is atomic, and $s \in S$ is contained in the unique closed stratum, with $\overline{\mathcal{M}}_{S,s} = \gamma^{\vee}_\NN$ for some cone $\gamma$. Moreover, the tropicalization is given by a map of cone complexes $\Gamma^\circ \rightarrow \gamma$. We note that $S$ has an idealized log structure $\mathcal{I} \subset \mathcal{M}_S$ coming from the punctured log curve $C^\circ \rightarrow S$. Letting $y\in \overline{C}$ be a geometric point over $s \in S$, if the fiber of the partial stabilization map $C \rightarrow \overline{C}$ over $y \in \overline{C}$ is a point, or does not contain a punctured point, then either the result is trivial or follows directly from \cite{stacktrop}. Thus, suppose there exists a punctured point $y' \in C$ which maps to $y$ over $s \in S$, and let $\gamma_{l'}$ be the cone of $\Gamma^\circ$ associated with $y'$. Consider the ghost sheaf sequence:

\[0\rightarrow \mathcal{O}_C^{\times} \rightarrow \mathcal{M}_{C^\circ} \rightarrow \overline{\mathcal{M}}_{C^\circ} \rightarrow 0 .\]

By \cite{compGW} Lemma $B.4$, $c_*\mathcal{M}_C$ commutes with base change on $S$. After restricting to the fiber over $y$, we push this sequence forward along the contraction. By taking the associated long exact sequence, restricting the sequence to appropriate submonoids, and recalling $c_*(\mathcal{O}^\times_{C}) = \mathcal{O}^\times_{\overline{C}}$, we produce the exact sequence:

\[0\rightarrow \mathcal{O}^{\times}_{\overline{C},y} \rightarrow \mathcal{M}_{\overline{C}^\circ,y} \rightarrow (c_*\overline{\mathcal{M}}_{C^\circ})_{y} \rightarrow Pic(c^{-1}y). \]

Since $c$ only contracts rational unstable components, by taking multidegrees of a line bundle on a rational nodal curve, we may identify the last term with $\mathbb{Z}^{|V|}$, $V$ being the set of vertices in the dual graph of $c^{-1}y$. Since $C \rightarrow S$ is a log curve, this graph is canonically metrized by elements of $\gamma_{\NN}^{\vee}$. After making this identification as well as the identification of the monoid $c_*(\overline{\mathcal{M}}_{C^\circ})_y$ with $\gamma^{\vee}$ valued PL functions on the meterized dual graph of $c^{-1}y$ with slopes along edges integer multiples of the edge lengths which take values in $\gamma_\NN^\vee$ for vertices of the dual graph, the connecting homomorphism sends a $PL$ function to the sum of the outgoing slope magnitudes at each vertex.  In particular, every element $f \in \gamma_{l',\NN}^{\vee,gp}$ uniquely determines an element of $\overline{\mathcal{M}}_{\overline{C}^\circ,y}^{gp}$, by noting there is a unique balanced $\gamma_\NN^{\vee,gp}$ valued PL function on the dual graph of $c^{-1}y$ which extends $f$. The monoid $\overline{\mathcal{M}}_{\overline{C}^{\circ},y}$ is then identified with the collection of linear functions $f \in \overline{\mathcal{M}}_{C^\circ,y'} \subset \gamma_{l',\NN}^{\vee}$ which induce a balanced $\gamma_{\NN}^{\vee}$ valued function on the dual graph of $c^{-1}y$. Since $\mathcal{M}_{C^\circ}$ contains the log structure of the non-punctured curve, the pushforward log structure contains a sub log structure which realizes $\overline{C}$ as an ordinary log curve. The elements in $\mathcal{M}_{\overline{C}^\circ,y} \setminus \mathcal{M}_{\overline{C},y}$ have image in $\overline{\mathcal{M}}_{\overline{C}^\circ,y}$ corresponding to PL functions on the dual graph of $c^{-1}y$ with negative slope along the leg associated with $y$. Since these functions are non-negative across the entire leg, they are in particular non-negative at the unique vertex contained in the leg. Thus, we have the containment $\overline{\mathcal{M}}_{\overline{C}^\circ,y} \subset \gamma^{\vee}_\NN \oplus \mathbb{Z}$.

To see that $\mathcal{M}_{\overline{C}^\circ}$ is locally a puncturing around $y$, consider $m \in \mathcal{M}_{\overline{C}^\circ,y} \setminus \mathcal{M}_{\overline{C},y}$, with image in the ghost sheaf denoted by $\overline{m} \in \overline{\mathcal{M}}_{\overline{C}^\circ,y}$. Note the map on stalks induces an embedding $\overline{\mathcal{M}}_{\overline{C}^\circ,y} \rightarrow \overline{\mathcal{M}}_{C^\circ,y'}$, which restricts to the embedding $\overline{\mathcal{M}}_{\overline{C},y} \rightarrow \overline{\mathcal{M}}_{C,y'}$. To better understand these maps, note first that by \cite{stacktrop} Lemma $8.8$, $\overline{\mathcal{M}}_{\overline{C},y}$ is isomorphic to $\gamma^\vee_{\NN} \oplus \mathbb{N}$. Letting $\rho$ be the sum of the lengths of all edges of $\Gamma$ associated to nodes mapping down to $y$, the monoid $\overline{\mathcal{M}}_{C,y'}$ consists of elements of $\gamma_\NN^{gp} \oplus \ZZ$ which are non-negative on the upper convex hull of the graph of $\rho$ in $\gamma^{gp}\oplus \mathbb{R}_{\ge 0}$. Thus, we have the isomorphism: 

\begin{equation}\label{exmon}
\overline{\mathcal{M}}_{C,y'} \cong \gamma^{\vee}_{\NN} \oplus \mathbb{N} + \langle( -\rho , 1)\rangle \subset (\gamma^{\vee}_{\NN})^{gp} \oplus \mathbb{Z}.
\end{equation}
Moreover, the morphism $\overline{\mathcal{M}}_{\overline{C},y} \rightarrow \overline{\mathcal{M}}_{C,y'}$ is simply the inclusion morphism. Furthermore, the morphism $\overline{\mathcal{M}}_{\overline{C}^\circ,y} \rightarrow \overline{\mathcal{M}}_{C^\circ,y'}$ is induced by restriction of the induced isomorphism $\overline{\mathcal{M}}_{\overline{C},y}^{gp} \rightarrow \overline{\mathcal{M}}_{C,y'}^{gp}$. Now write $\overline{m} = (\overline{m}_1,\overline{m}_2) \in \gamma^{\vee}_{\NN} \oplus \mathbb{Z}$, and note that since $\overline{m} \in \overline{\mathcal{M}}_{\overline{C}^\circ,y}\setminus \overline{\mathcal{M}}_{\overline{C},y}$, $m_2$ must be in $\ZZ_{< 0}$. In particular, we must have $\overline{m} \in \overline{\mathcal{M}}_{C^\circ,y'} \setminus \overline{\mathcal{M}}_{C,y'}$. Since $C \rightarrow S$ is a punctured log curve, by definition of the idealized log structure on $S$, we must have $\overline{m}_1 \in I \subset \gamma^{\vee}_{\NN}$. In particular, for any lift $m_1 \in \mathcal{M}_{S,s}$ of $\overline{m}_1$, we have $\alpha_S(m_1) = 0$. Since $-\overline{m}_2 \in \mathbb{N}_{>0} \subset \mathbb{Z}$, it follows that $(0,-\overline{m}_2) \in \overline{\mathcal{M}}_{\overline{C},y}$ and for $m_2 \in \mathcal{M}_{\overline{C}^\circ,y}^{gp}$ a lift of $\overline{m_2}$, $\overline{m_2^{-1}m} = \overline{m_1}$. Hence $\alpha_{\overline{C}^\circ}(-m_2^{-1}m) =  \alpha_{\overline{C}^\circ}(m_2^{-1})\alpha_{\overline{C}^\circ}(m) = 0$. Since $\alpha_{\overline{C}^\circ}(m_2^{-1}) \in \mathcal{O}_{\overline{C},y}$ is a power of coordinate on $\overline{C}$ vanishing at $y$, and $\overline{C}$ is locally smooth around $y$, we must have $\alpha_{\overline{C}^\circ}(m) = 0$, as required. 

The final factorization statement follows by adjunction of the pair $(c_*,c^*)$ of functors on the categories of sheaves of monoids. 
\end{proof}

By the description of the ghost sheaf $\overline{\mathcal{M}}_{\overline{C}^\circ}$ above, we see that the $\Sigma(\overline{C}^\circ)$ is the family of tropical curves parameterized by $\Sigma(S)$, which over $s \in \Sigma(S)$ is given by a tropical curve $\overline{\Gamma}^\circ_s$ having underlying graph given by forgetting the vertices associated to contracted components, and gluing resulting open edges. 

With the above lemma in mind, if we are given a stable punctured log map $C^\circ \rightarrow \tilde{X}$, then by composing the morphism with the log \'etale map $\tilde{X} \rightarrow X$, stabilizing the underlying map, and pushing forward the log structure, we produce a punctured log map $\overline{C}^\circ \rightarrow X$. We wish to relate the tropical type of the original stable log map $C^\circ \rightarrow \tilde{X}$ to the tropical type of $\overline{C}^\circ \rightarrow X$. The following theorem shows precisely what this relation must be.

\begin{theorem}\label{tlift}
Let $S = $Spec $\kk$, and $(C/S,f: C \rightarrow \tilde{X})$ be a basic prestable log curve, with tropical type $\gamma$. Then the underlying map of schemes $C^u \rightarrow \tilde{X}$ is stable if and only if $\gamma$ is a tropical lift of the tropical type $\tau$ of a basic stable log map $(\overline{C}/S, \overline{f}: \overline{C} \rightarrow X)$. 
\end{theorem}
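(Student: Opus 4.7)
The plan is to prove both implications using the partial stabilization construction developed in Lemma \ref{plog}, which relates the tropicalization of a punctured log curve to that of its partial stabilization obtained by contracting unstable rational components.

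For the forward direction, I start with the basic prestable log map $f\colon C \to \tilde X$ of type $\gamma$ whose underlying map is stable, compose with $\pi\colon \tilde X \to X$ to obtain a prestable log map to $X$, and partially stabilize the underlying scheme-theoretic map by contracting precisely those rational components on which $\pi \circ f$ is constant. By Lemma \ref{plog} this contraction lifts to a factorization in log schemes, producing a basic stable log map $\overline f\colon \overline C \to X$ of some tropical type $\tau$. It then remains to verify the two conditions of Definition \ref{dtlift}: an inclusion of cones $\gamma \hookrightarrow \tau$ with $int(\gamma) \subset int(\tau)$, and the identification $\Gamma^\circ_\gamma = \overline\Gamma^\circ_\gamma \times_{\Sigma(X)} \Sigma(\tilde X)$ of universal tropical families. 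Both follow from the local description of $\overline{\mathcal M}_{\overline C^\circ}$ given in Lemma \ref{plog}: the dual graph $G_\gamma$ differs from $G_\tau$ only by 2-valent rational vertices subdividing certain edges and legs, and the tropical data $(\pmb\sigma_\gamma,\textbf{u}_\gamma)$ on these new vertices is determined by pulling back the subdivision $\Sigma(\tilde X) \to \Sigma(X)$ along the map $\overline\Gamma_\tau \to \Sigma(X)$, exactly as carried out in the proof of Proposition \ref{tcurve}.

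For the reverse direction, I assume $\gamma$ is a tropical lift of some $\tau$ realized by a basic stable log map $\overline f\colon \overline C \to X$, and argue by contradiction that $C^u \to \tilde X$ is stable. A failure of stability produces a 2-valent genus-zero vertex $v \in V(G_\gamma)$ with $\pmb\sigma_\gamma(v) = 0 \in \Sigma(\tilde X)$. Using the fiber product description $\Gamma^\circ_\gamma = \overline\Gamma^\circ_\gamma \times_{\Sigma(X)} \Sigma(\tilde X)$ from the lift condition, the vertex $v$ either projects to a vertex $\overline v \in V(G_\tau)$, in which case $\pmb\sigma_\tau(\overline v) = 0$ and the corresponding component of $\overline C$ has constant image with only two special points, contradicting stability of $\overline f$; or $v$ is introduced by the subdivision of some edge or leg $e$ of $G_\tau$, in which case $\pmb\sigma_\gamma(v)$ is a positive-dimensional cone of $\Sigma(\tilde X)$ properly subdividing $\pmb\sigma_\tau(e)$, contradicting $\pmb\sigma_\gamma(v) = 0$.

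The main obstacle will be the bookkeeping around punctured legs: a subdivided leg of $G_\tau$ can introduce several new 2-valent vertices in $G_\gamma$ corresponding to different choices of puncturing (as illustrated in the first figure following Proposition \ref{tcurve}), and the puncturing of $\Gamma^\circ_\gamma$ must be verified to agree with the pullback puncturing on $\overline\Gamma^\circ_\gamma \times_{\Sigma(X)} \Sigma(\tilde X)$. This matching will be carried out using the prestability condition from \cite{punc} Definition 2.14 recalled in the preliminaries, which pins down the fine, typically non-saturated, log structure at each punctured section from its tropical data. Once the punctured case is settled, the stability dichotomy described above becomes formal, and the fact that $\tau$ is the tropical type of the stabilization of $\gamma$ is forced by the universal property of the partial stabilization in Lemma \ref{plog}.
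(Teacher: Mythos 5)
Your ``lift $\Rightarrow$ stable'' direction is essentially the paper's argument: a vertex introduced by the subdivision lies on two edges mapping to distinct, strictly larger cones of $\Sigma(\tilde{X})$, so its component has points in distinct strata and cannot be contracted by $f$. However, your claim that a failure of stability produces a vertex with $\pmb\sigma_\gamma(v)=0$ is false --- a component contracted by $f$ can land in any stratum of $\tilde{X}$, so $\pmb\sigma_\gamma(v)$ can be any cone --- and the dichotomy should instead be run directly on whether $v$ survives in $G_\tau$ or is a subdivision vertex. This is repairable.

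The genuine gap is in the converse, ``stable $\Rightarrow$ lift.'' You assert that after stabilizing, the identification $\Gamma^\circ_\gamma \cong \overline{\Gamma}^\circ_\tau \times_{\Sigma(X)}\Sigma(\tilde{X})$ ``follows from the local description of $\overline{\mathcal{M}}_{\overline{C}^\circ}$ given in Lemma \ref{plog}.'' It does not: Lemma \ref{plog} only computes the pushforward log structure along a \emph{given} contraction and says nothing about which vertices of $G_\gamma$ the stabilization is allowed to contract. What must actually be proved is that stability of $f$ excludes a contracted $1$-valent vertex with zero contact order and, more subtly, a $2$-valent vertex whose two adjacent edges map to the \emph{same} cone of $\Sigma(\tilde{X})$ with cancelling contact orders. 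Such a vertex is perfectly compatible with the stabilization $\overline{C}\to X$ being a stable log map, so the only route to a contradiction is to show that the log structure forces $f$ itself to contract the corresponding component $C_v$. The paper does this by contracting $C_v$, pushing forward the log structure, lifting $\overline{C}^\circ \to \mathcal{A}_X$ through the subdivision $\widetilde{\mathcal{A}}_X \to \mathcal{A}_X$ via Lemma \ref{mtoA} --- which is where the delicate factorization check at the non-saturated punctured monoids (equation \ref{exmon}) is really needed --- and then invoking $\tilde{X} = X\times_{\mathcal{A}_X}\widetilde{\mathcal{A}}_X$ to conclude that $f$ factors through the contraction of $C_v$, contradicting stability. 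Your proposal contains none of this lifting argument; the prestability bookkeeping you flag as the ``main obstacle'' serves a different purpose than the one you assign it, and without it the direction you call ``forward'' is unproven.
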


\begin{proof}
Let $f: C \rightarrow \tilde{X}$ be a basic prestable punctured log curve over a base $(S,\mathcal{M}_{S,\gamma})$. By stabilizing the induced map $C \rightarrow X$ and pushing forward the puncturing on $C$ to the partial stabilization $\overline{C}$ as in Lemma \ref{plog}, we produce a stable log map $\overline{C} \rightarrow X$, which is pulled back from a basic stable log map $C^{stab} \rightarrow X$. Assuming $C$ is not an unmarked elliptic curve, an exceptional case we will handle in a final remark in this proof, the partial stabilization $c: C \rightarrow C^{stab}$ contracts $1$ and $2$ valent rational components, and the universal map to the basic log curve gives a morphism $\mathcal{M}_{S,\tau} \rightarrow \mathcal{M}_{S,\gamma}$ of log structures on the base $S$. If $\gamma$ is a tropical lift of $\tau$, for $\tau$ the tropical type of the log map $C^{stab} \rightarrow X$, then partial stabilization only contracts components corresponding to vertices of $G_{\gamma}$ contained in two elements of $E(G_\gamma)\cup L(G_\gamma)$ mapping to distinct cones of $\Sigma(\tilde{X})$. The tropical condition forces the component $C_v$ associated with $v$ to have points mapping to distinct strata of $\tilde{X}$. Indeed, dim $\pmb\sigma(v)<dim\text{ }\pmb\sigma(e)$ for $e \in E(G_\gamma)\cup L(G_\gamma)$ an edge containing $v$. In particular, $f$ does not contract $C_v$. Therefore, the map $f$ is stable. 

Now suppose to the contrary that $\gamma$ is not a tropical lift of $\tau$. Note that the graph $G_{\tau}$ associated with $\tau$ is obtained from $G_\gamma$ by forgetting appropriate vertices associated with contracted components of $C$. Given a $1$-valent vertex of $G_{\gamma}$ forgotten by $c$ contained in a unique edge $E$, by the logarithmic balancing condition applied to the map $C \rightarrow X$, we must have $\textbf{u}(E) = 0$. We call such a vertex a contracted $1$ valent vertex. Given a $2$-valent vertex of $G_{\gamma}$ associated with a contracted component such that the adjacent edges are contained in the same cone of $\Sigma(\tilde{X})$, their associated contact order must sum to $0$ again by the log balancing condition. In this case, $v$ must map into the interior of the cone mapped to by both adjacent edges, and we call such a vertex an unconstrained balanced $2$-valent vertex. Since $\gamma$ is not a tropical lift of $\tau$, $G_{\gamma}$ must have either a contracted $1$-valent vertex or an unconstrained balanced $2$-valent vertex. Letting $v \in V(G_{\gamma})$ be one such vertex, and $C_v$ the associated component of $C$, denote by $\overline{C} \rightarrow X$ the punctured log map given by contracting the component $C_v$ and pushing forward the log structure as in Lemma \ref{plog}. By composing with the strict morphism $X \rightarrow \mathcal{A}_X$, we also produce a log map $\overline{C} \rightarrow \mathcal{A}_X$. 

We now wish to find a morphism $\overline{C} \rightarrow \widetilde{\mathcal{A}}_X$ lifting $\overline{C} \rightarrow \mathcal{A}_X$. We will use the characterization of morphisms to subdivisions of Artin fans given in Lemma \ref{mtoA}. We start with noting that since $v$ is assumed either to be a contracted $1$-valent vertex or a unconstrained balanced $2$-valent vertex, in both cases, we have $\Sigma(C^\circ) \rightarrow \Sigma(\tilde{X})$ factors through the contraction $\Sigma(C^\circ) \rightarrow\Sigma(\overline{C}^\circ)$. Since $\overline{C}^\circ$ is fine and saturated away from punctured points, the factorization condition of item ii of Lemma \ref{mtoA} needs to only be checked in an atomic neighborhood of a punctured point $p$. Let $\gamma_{l'} \in \Sigma(C^\circ)$ and $\gamma_{l} \in \Sigma(\overline{C}^\circ)$ be the cones corresponding to punctured points $p'$ and $p$ of $C$ and $\overline{C}$ respectively such that the contraction morphism $\Sigma(C^\circ) \rightarrow\Sigma(\overline{C}^\circ)$ maps $\gamma_{l'}$ to $\gamma_l$. By the description of the monoid $Q_p:= \overline{\mathcal{M}}_{\overline{C}^\circ,p}$ associated to punctured point of $p \in \overline{C}^\circ$ described in Lemma \ref{plog}, we have $Q_p$ is the intersection of the image of $\gamma^{\vee}_{l,\NN} \subset \gamma^{\vee}_{l',\NN}$ with the prestable submonoid $Q_{p'} \subset Q_{p'}^{sat} = \gamma^{\vee}_{l',\NN}$. By the tropical factorization, we have the monoid morphism $\pmb\sigma(l')^{\vee}_{\NN} \rightarrow \gamma^{\vee}_{l',\NN}$ factors through $\gamma^{\vee}_{l,\NN} \subset \gamma^{\vee}_{l',\NN}$. Since the prestable monoid $Q_{p'}$ contains the image of $\pmb\sigma(l')^{\vee}_\NN \rightarrow \gamma^{\vee}_{l',\NN}$, we have the morphism $\pmb\sigma(l')^{\vee}_\NN \rightarrow \gamma^{\vee}_{l,\NN}$ factors through $Q_p \subset \gamma^{\vee}_{l,\NN}$. By pulling back the \'etale cover of $\mathcal{A}_X$ by Artin cones, we see that \'etale locally we have a morphism $\Sigma(\overline{C}^\circ) \rightarrow \Sigma(\tilde{X})$ satisfying the factorization property given in Lemma \ref{mtoA}. The locally defined morphism thus extends to a global morphism on $\overline{C}^{\circ}$.

This gives the remaining data necessary to define a morphism $\overline{C}^\circ \rightarrow \widetilde{\mathcal{A}}_X$ making the following commutative diagram:

\[\begin{tikzcd}
C^\circ \arrow{r} \arrow{d} & \tilde{X} \arrow{r} \arrow{d} & \widetilde{\mathcal{A}}_X \arrow{d} \\
\overline{C}^\circ \arrow{r} \arrow[rru,crossing over] \arrow[ru,dashrightarrow,crossing over] & X \arrow{r} & \mathcal{A}_X
\end{tikzcd}\]

Using the pullback diagram \ref{mod}, we have a filling of the dashed arrow making the above diagram commute. After taking underlying schemes, we see that $C^u \rightarrow \tilde{X}^u$ factors through a contraction of a rational component, hence cannot be stable. 

When $C$ is an unmarked elliptic curve, the ghost sheaf is the constant sheaf associated with the monoid $\pmb\sigma(v)^{\vee}_\NN$, for $v$ the unique vertex of $G_\gamma$. The tropical factorization problem is also solved in this case, and thus $C \rightarrow \tilde{X}$ factors through the stabilization of the underlying map $C \rightarrow X$. Since $C \rightarrow \tilde{X}$ cannot factor through a point, we must have $C \rightarrow X$ does not have image a point. In this case, $\gamma$ must be a tropical lift of the tropical type of $C \rightarrow X$ as well. 
\end{proof}

Now consider the DM stack $\mathscr{M}_\gamma:= \mathscr{M}(\tilde{X}/B,\gamma)$, and the corresponding algebraic stack of log curves to the relative Artin fan $\mathfrak{M}_\gamma := \mathfrak{M}(\tilde{\mathcal{X}}/B,\gamma)$. As with all log stacks of punctured log curves, $\mathfrak{M}_\gamma$ comes equipped with an idealized log structure $\mathcal{J} \subset \mathcal{M}_{\mathfrak{M}_\gamma}$ which is pulled back from an ideal $\overline{\mathcal{J}} \subset \overline{\mathcal{M}}_{\mathfrak{M}_\gamma}$. We construct a stabilization morphism $\mathscr{M}_\gamma \rightarrow \mathscr{M}_\tau$ as follows: On an object $(C^\circ/S,f)$, it is defined by composing $f$ with the log \'etale map $\pi: \tilde{X} \rightarrow X$, followed by stabilizing the underlying curve, pushing forward the log structure to the partially stabilized curve $\overline{C}^u$ and taking the unique basic family of punctured log curves associated with this family. Note that by identifying contact orders on $\Sigma(\tilde{X})$ with contact orders in $\Sigma(X)$ via the map of cone complexes $\Sigma(\tilde{X}) \rightarrow \Sigma(X)$, the tropical class $\beta'$ of $\gamma$, in the sense of \cite{punc} Definition $3.4$, specifies the tropical class $\beta$ of the stabilization of $C \rightarrow X$. As an intermediate step to showing the above object assignment extends to a morphism of DM stacks, we show the following:

\begin{proposition}\label{stab}
The stabilization map $s: \mathscr{M}(\tilde{X}/B, \gamma) \rightarrow \mathscr{M}(X/B,\beta)$ is a morphism of Deligne-Mumford stacks.
\end{proposition}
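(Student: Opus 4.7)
The plan is to verify that the object-level assignment described just before the proposition --- compose $f$ with $\pi$, stabilize the underlying map of schemes, push forward the log structure via Lemma \ref{plog}, and replace the resulting family by its associated basic model --- extends to a well-defined $1$-morphism of stacks on $(\mathbf{LogSch}/B)$. Both $\mathscr{M}(\tilde{X}/B,\gamma)$ and $\mathscr{M}(X/B,\beta)$ are already known to be Deligne-Mumford by the recollections of Section 3, so once the assignment is shown to respect pullback along log morphisms $T\to S$, the DM-morphism claim follows automatically; there is nothing further to verify on geometric stabilizers.

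The main technical check is therefore base-change compatibility. At the level of underlying schemes, the contraction of unstable rational components in a family of prestable curves is classically compatible with strict pullback. For the log structure on the partial stabilization, the compatibility of $c_*\mathcal{M}_C$ with arbitrary base change on $S$ is \cite{compGW} Lemma B.4, which was already invoked in the proof of Lemma \ref{plog}. The puncturing $\mathcal{M}_{\overline{C}^\circ}$ is produced fiberwise in Lemma \ref{plog} as a fine subsheaf of $(c_*\mathcal{M}_C)^{gp} \oplus_{\mathcal{O}^\times} \mathcal{P}^{gp}$ cut out by the puncturing ideal, so a direct inspection shows it pulls back correctly as well. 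Basification is a universal construction and hence commutes with the whole picture, producing a canonical isomorphism between the stabilization of the pullback and the pullback of the stabilization.

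To see that the output actually lies in $\mathscr{M}(X/B,\beta)$, Theorem \ref{tlift} supplies stability of the underlying map $\overline{C}^u \to X$, using crucially that $\gamma$ is a tropical lift. The discrete invariants are controlled by the identification of contact orders under $\Sigma(\tilde{X}) \to \Sigma(X)$ remarked upon just before the proposition, so the basification lands in the component marked by $\beta$. The main obstacle, as elsewhere in the punctured setting, is the careful bookkeeping of the fine but non-saturated puncturing sheaf under pullback, since its defining condition involves the signs of outgoing slopes of PL functions along legs; the explicit fiberwise description in Lemma \ref{plog}, combined with base change for $c_*$, reduces this point to a direct verification. With these pieces in place, the assignment $s$ is a morphism of stacks, and hence a morphism of DM stacks.
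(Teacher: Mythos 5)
Your outline matches the paper's proof: stabilize, push forward the punctured log structure via Lemma~\ref{plog}, pass to the associated basic model, and check strict base-change compatibility using \cite{compGW} Lemma~B.4 for $c_*\mathcal{M}_C$. The one step you assert rather than justify is the delicate one: ``basification is a universal construction and hence commutes with the whole picture.'' Universal constructions do not automatically commute with base change; what makes it work here is that pulling back the basic log structure along $S^{'u}\to S^{u}$ still yields a \emph{prestable} punctured log map, which is the content of \cite{punc} Proposition~2.15, and that basicness itself is preserved under pullback. With those two facts in hand, the universal property of the basic object gives the unique morphism $\overline{C}^{'\circ}\to\overline{C}^{'\circ,bas}$ and hence a well-defined arrow in $\mathscr{M}(X/B,\beta)$. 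You do flag that the fine non-saturated puncturing sheaf is where the hazard lies, but attributing the resolution to ``a direct verification'' misses that the real content is the cited prestability statement. Fill that in and your argument coincides with the paper's.
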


\begin{proof}

On the level of underlying stable curves, this is classical. That is, for a morphism of families of stable maps $(C'/S',f') \rightarrow (C / S,f)$ to $\tilde{X}$, we have a morphism $\overline{C}' \rightarrow \overline{C}$ making the following diagram commutative, with left composite rectangle and bottom left square cartesian in the category of schemes:

\[
\begin{tikzcd}
C' \arrow{r}\arrow{d} & C \arrow{d} \arrow{r} & \tilde{X} \arrow{d}\\
\overline{C}' \arrow{r} \arrow{d} \ar[dr,phantom,"\square"] & \overline{C} \arrow{d} \arrow{r} & X \arrow{d}\\
S' \arrow{r} & S \arrow{r} & B
\end{tikzcd}
\]

Now suppose we have $(C' /S', f')$ and $(C /S,f)$ are families of punctured log curves associated with strict morphisms $S' \rightarrow \mathscr{M}(\tilde{X}/B,\gamma)$ and $S \rightarrow \mathscr{M}(\tilde{X}/B,\gamma)$ respectively, with the induced morphism $S' \rightarrow S$ strict. We denote the resulting families of punctured curves by $C^{'\circ}$ and $C^\circ$. By Lemma \ref{plog}, we may push forward the punctured log structures on $C^\circ$ and $C^{'\circ}$ to produce punctured log curves $\overline{C}^\circ$ and $\overline{C}^{'\circ}$, which furthermore lift the above diagram in schemes to a diagram in log schemes. Since $(S',\mathcal{M}_{S',\gamma}) \rightarrow (S,\mathcal{M}_{S,\gamma})$ is strict, the bottom and outer square are cartesian in all categories. 

Note that the families of punctured log maps $\overline{C}^\circ \rightarrow X$ and $\overline{C}^{'\circ} \rightarrow X$ produced above will not necessarily be basic families, hence will not give $s((C/S,f))$ nor $s((C'/S',f'))$. However, $\overline{C}^\circ \rightarrow (S,\mathcal{M}_{S,\gamma})$ pulls back from a basic prestable punctured log curve $\overline{C}^{\circ,bas} \rightarrow (S,\mathcal{M}_{S,\tau})$. By pulling back the basic log structure $(S,\mathcal{M}_{\tau})$ along $S' \rightarrow S$, we produce a log scheme $(S',\mathcal{M}_{S',\tau})$ along with a strict morphism of log schemes $(S',\mathcal{M}_{S',\tau}) \rightarrow (S,\mathcal{M}_{S,\tau})$. Pulling back $\overline{C}^{\circ,bas}$ along $(S',\mathcal{M}_{S',\tau}) \rightarrow (S,\mathcal{M}_{S,\tau})$ gives $\overline{C}^{'\circ,bas} \rightarrow (S',\mathcal{M}_{S',\tau})$, together with a map $\overline{C}^{'\circ,bas} \rightarrow X$. By \cite{punc} Proposition 2.15, the resulting family of punctured log curves is prestable. Since basicness is preserved under pullback, the resulting family is also basic. The universal property of the pullback gives a morphism $\overline{C}^{'\circ} \rightarrow \overline{C}^{'\circ,bas}$, which must be the unique morphism from $\overline{C}^{'\circ}$ to a basic punctured log curve. Hence, we have an induced morphism $s((C'/S', f')) \rightarrow s((C/S,f))$ in the fibered category $\mathscr{M}(X,\beta)$. It is straightforward to check that this data defines a functor of fibered categories over schemes.

\end{proof}

In general, some care needs to be taken in understanding the tropical types of the stabilization. While by Theorem \ref{tlift}, we know that the tropical type of a stable punctured log map to $\tilde{X}$ is a lift of the tropical type of the stabilization, a given type $\gamma$ might be the lift of multiple types $\tau$. The problem is that if $v \in V(G_\gamma)$ is a vertex such that $\pmb\sigma(v) \in \Sigma(\tilde{X})$ is a cone mapping into a higher dimensional cone of $\Sigma(X)$ under $\Sigma(\tilde{X}) \rightarrow \Sigma(X)$, then $\gamma$ is a lift of at least two types, one which removes the resulting unconstrained vertex, and one which does not. We call vertices of $\tau$ coming from ambiguous $2$-valent vertices of $\gamma$ balanced $2$-valent vertices. Due to issues presented by balanced $2$-valent vertices, even if $\gamma$ is a lift of $\tau$, it is not the case in general that stabilization induces a morphism $\mathscr{M}(\tilde{X}/B,\gamma) \rightarrow \mathscr{M}(X/B,\tau)$. 

\begin{figure}[h]
\centering
\begin{tikzpicture}
\fill[white!70!blue, path fading = north]  (0,0)--(4,0)--(4,4)--(0,4)--cycle;
\draw[black] (0,0)--(4,0);
\draw[black] (0,0)--(0,4);
\draw[black] (0,0)--(4,4);
\draw[ball color = red] (3,0) circle (0.5mm);
\draw[ball color = red] (3/2,3/2) circle (0.5mm);
\draw[color = red] (3,0)--(3/2,3/2);
\draw[->, color = red] (3/2,3/2)--(0,3);

\fill[white!70!blue, path fading = north] (5,0)--(9,0)--(9,4)--(5,4)--cycle;
\draw[black] (5,0)--(9,0);
\draw[black] (5,0)--(5,4);
\draw[black,dashed] (5,0)--(9,4);
\draw[ball color = red] (8,0) circle (0.5mm);
\draw[ball color = red] (9,0) circle (0.5mm);
\draw[ball color = red] (13/2,3/2) circle (0.5mm);
\draw[color = red] (8,0)--(13/2,3/2);
\draw[color = red] (9,0)--(7,2);
\draw[->, color = red] (13/2,3/2)--(5,3);
\draw[->, color = red] (7,2)--(5,4);

\end{tikzpicture}
\caption{The lefthand figure depicts a tropical type with an ambiguous $2$-valent vertex, and the righthand figure depicts two possible tropical lifts associated with keeping or removing a balanced $2$-valent vertex.} 
\end{figure}

We can address this issue in two ways. First, if $\pmb\gamma$ is a decorated lift of a decorated tropical type, then this ambiguity is removed, as the push forward of the class of a component which is contracted by stabilization is zero, resolving this issue in the decorated case. Without assuming decoration, we note that the stabilization is at least marked by a tropical type $\omega$ which removes all balanced $2$-valent vertices from $\tau$. In this case, we can also guarantee the morphism has the intended target. In all cases, the resulting pairs are tropical lifts.

Assuming any of the above conditions ensures that stabilization induces a morphism $\mathscr{M}(\tilde{X}/B,\gamma) \rightarrow \mathscr{M}(X/B,\tau)$.

\section{Universal modification $\mathfrak{M}_{\gamma \rightarrow \tau} \rightarrow \mathfrak{M}_\tau$}
As in Proposition $1.6.2$ of \cite{bir_GW}, we wish to express $\mathscr{M}_{\gamma}$ as an appropriate pullback involving the morphism $\mathscr{M}_\tau \rightarrow \mathfrak{M}_\tau$. To do so, we need an auxiliary stack, which will be determined by an analogous tropical moduli problem.

Associated to $\mathfrak{M}_\tau$ is the Artin fan $\mathcal{A}_{\mathfrak{M}_\tau}$, which admits a strict map $\mathfrak{M}_\tau \rightarrow \mathcal{A}_{\mathfrak{M}_\tau}$. The Artin fan $\mathcal{A}_{\mathfrak{M}_\tau}$ has an \'etale cover by Artin cones $\mathcal{A}_{\tau'}$ as $\tau'$ ranges over strata of $\mathfrak{M}_\tau$. Each cone $\tau'$ is a tropical moduli space of tropical curves with a fixed tropical type $\tau'$, hence comes with a universal tropical family of maps $(\Gamma_{\tau'}/\tau',f_{\tau'})$ of punctured tropical maps of type $\tau'$.

By pulling back and pushing forward the modification $\Sigma(\tilde{X}) \rightarrow \Sigma(X)$ as done before in the case $\tau' = \tau$, followed by merging appropriate cones as was done following Lemma \ref{tcurve}, we produce subdivisions $\tilde{\tau}' \rightarrow \tau'$. These subdivisions respect face inclusions, i.e., pulling back the modification of $\tau'$ along a face $\sigma \subset \tau'$ gives the modification $\tilde{\sigma} \rightarrow \sigma$. Indeed, by the remarks following Lemma \ref{tcurve}, the subdivision depends only on the family $(\Gamma_{\tau'}/\tau',f_{\tau'})$ and the subdivision $\Sigma(\tilde{X}) \rightarrow \Sigma(X)$. Recall the subcomplex $\tilde{\gamma} \subset \tilde{\tau}$ of Lemma \ref{tlemma} which is a subdivision of $\gamma$, whose maximal cones correspond to maximally extendable tropical lifts $\omega$ of $\tau$ such that $\omega > \gamma$. Consider the subcomplex $\tilde{\tau}'_{\gamma \rightarrow \tau} \subset \tilde{\tau}'$ consisting of cones associated with tropical types which are marked by tropical types corresponding to cones of $\tilde{\gamma}$, together with the faces of these cones. Recall from Definition \ref{dtlift} that associated to any tropical lift $\gamma'$ of $\tau'$ is a choice of puncturing of the universal family $\Gamma^\circ_{\tau'}$. For $\omega' \in \tilde{\tau}'_{\gamma \rightarrow \tau}$ a cone associated with a maximally extendable tropical lift of $\tau'$, we note in the following lemma that there exists a unique tropical lift $\gamma'$ of $\tau'$ such that $\omega'$ is a maximal extension of $\gamma'$, and $\gamma'$ is marked by $\gamma$.

\
\begin{lemma}\label{upunc}
For any maximal cone $\omega' \in \tilde{\tau}'_{\gamma \rightarrow \tau}$, there is a unique tropical lift $\gamma'<\omega'$ of $\tau'$ which is marked by $\gamma$. 
\end{lemma}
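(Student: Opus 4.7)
The plan is to produce $\gamma'$ by transporting the puncturing choice that defines $\gamma$ from $\omega$ across the face inclusion $\tau \hookrightarrow \tau'$ underlying the marking of $\tau'$ by $\tau$. Since the subdivisions and the partial order on tropical lifts behave compatibly with face restriction, this transport will be both possible and forced.

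First I would unpack the hypothesis $\omega' \in \tilde{\tau}'_{\gamma \rightarrow \tau}$. By definition of this subcomplex, $\omega'$ is a maximally extended lift of $\tau'$ which, restricted along the face inclusion $\tau \hookrightarrow \tau'$, corresponds to a maximally extended lift $\omega$ of $\tau$ lying in $\tilde{\gamma}$, i.e.\ one satisfying $\omega > \gamma$. This is consistent because the subdivisions $\tilde{\tau}' \rightarrow \tau'$ were constructed face-wise from $\Sigma(\tilde{X}) \rightarrow \Sigma(X)$ via the universal family $\Gamma^\circ_{\tau'}/\tau'$, so pulling back along $\tau \hookrightarrow \tau'$ commutes with the subdivision construction, and on the universal family carries the cones $\tau'_l \subset \Gamma^\circ_{\tau'}$ associated to legs $l \in L(G_{\tau'})$ to the corresponding cones $\tau_l \subset \Gamma^\circ_\tau$.

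Next I would recall the explicit description of lifts below a maximal one: any $\gamma' < \omega'$ is determined by selecting, for each leg $l \in L(G_{\tau'})$, a vertex of the ordered chain of cones into which $\tau'_l$ is subdivided, at which to truncate. Since markings preserve legs, we have $L(G_\tau) = L(G_{\tau'})$, and the subdivision of $\tau_l$ is the pullback of the subdivision of $\tau'_l$ along the face inclusion. Therefore truncation choices on chains over $\tau'$ are in canonical bijection with truncation choices on chains over $\tau$.

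For existence, I would define $\gamma'$ by making, for each leg $l$, the truncation choice on $\tau'_l$ that corresponds under this bijection to the truncation choice on $\tau_l$ defining $\gamma$ from $\omega$. By construction the pullback of the universal family of $\gamma'$ along $\tau \hookrightarrow \tau'$ is the universal family of $\gamma$, so $\gamma$ is a face of $\gamma'$ and $\gamma'$ is marked by $\gamma$. For uniqueness, any lift $\gamma'' < \omega'$ of $\tau'$ marked by $\gamma$ must restrict along $\tau \hookrightarrow \tau'$ to $\gamma$, forcing its truncation on every leg of $\tau'$ to agree with the one dictated by $\gamma$ on the corresponding leg of $\tau$; hence $\gamma'' = \gamma'$. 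The only real item to check is the face-compatibility of subdivisions with leg subdivisions, and this is exactly what was built into the construction of $\tilde{\tau}'$ earlier in the section, so no further surprises arise.
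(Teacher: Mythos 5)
Your proof is correct and follows essentially the same strategy as the paper's: both arguments exploit the face-compatibility of the subdivision construction to transport the truncation data defining $\gamma$ relative to $\omega$ across the marking $\tau \hookrightarrow \tau'$, yielding the truncation defining $\gamma'$ relative to $\omega'$. The paper phrases this concretely in terms of identifying, for each leg of $G_\gamma$, a corresponding edge or leg $e_l$ of $G_{\omega'}$ and discarding everything above it in the chain ordering, while you state the same thing as a canonical bijection of truncation choices; the content and the uniqueness argument are the same.
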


\begin{proof}
 Note that the family of tropical curves $\Gamma^\circ_{\gamma}/\omega$ pulled back along $\omega \rightarrow \gamma$ is a subcomplex of $\Gamma^{\circ}_{\omega}/\omega$. Since $\omega$ marks $\omega'$, for every edge $e \in E(G_\omega)$ subdividing a leg $l \in L(G_\tau)$, there exists a unique edge $e' \in E(G_{\omega'})$ such that the cone $\omega'_{e'} \subset \Gamma^\circ_{\omega'}/\omega'$ contains $\omega_{e} \in \Gamma^\circ_{\gamma}/\omega$ as a face. In particular, after identifying every leg $l \in L(G_\gamma)$ with an edge or leg $e_l \in E(G_\omega) \cup L(G_\omega)$, by removing cones from $\Gamma^{\circ}_{\omega'}/\omega'$ associated to edges and legs $e_i \in E(G_{\omega'})\cup L(G_{\omega'})$ such that $e_i > e_{l}$ for some $l \in L(G_{\gamma})$, we produce a family of tropical curves marked by $\gamma$, and whose associated tropical type $\gamma'$ satisfies $\gamma' < \omega'$, as required. Uniqueness follows by the uniqueness of $e_l \in E(G_{\omega'})$ for each $l \in L(G_{\gamma})$. 

\end{proof}

Using this lemma, we generalize Lemma \ref{tlemma}, and coarsen $\tilde{\tau}'_{\gamma \rightarrow \tau}$ to a cone complex $\tau'_{\gamma\rightarrow \tau}$ so that cones of the resulting complex are tropical moduli spaces of tropical curves of type $\gamma'$, which are marked by $\gamma$ and lift $\tau'$.
 
\begin{lemma}\label{tropuni}
For any tropical type $\gamma'$ which is marked by $\gamma$ and lifts $\tau'$, the collection of cones $\omega'$ of $\tilde{\tau}'_{\gamma \rightarrow \tau}$ with $\omega' > \gamma'$ gives a subdivision $\tilde{\gamma}'$ of $\gamma'$. Moreover, the collections of such cones $\gamma'$ form a cone complex $\tau'_{\gamma \rightarrow \tau}$ which coarsens $\tilde{\tau}'_{\gamma \rightarrow \tau}$. In particular, the moduli problem of punctured tropical curves marked by $\gamma$ and lifting $\tau'$ is representable by the cone complex $\tau'_{\gamma \rightarrow \tau}$.
\end{lemma}
 
 \begin{proof}
 Given a tropical type $\gamma'$ above, as in Lemma \ref{tlemma}, there exists a subdivision $\tilde{\gamma}' \rightarrow \gamma'$ such that each cone parameterizes a family of maximally extended tropical curves for some tropical lift $\omega'$ of $\tau'$ such that $\omega' > \gamma'$, with the subdivision morphism induced by forgetting appropriate legs and edges of $L(G_{\omega'})\cup E(G_{\omega'})$. In particular, $\tilde{\gamma}'$ is a subcomplex of $\tilde{\tau}'$. Since $\gamma'$ is marked by $\gamma$, each maximally extendable tropical type associated with a cone of $\tilde{\gamma}'$ is marked by a maximally extendable tropical type associated with $\tilde{\gamma}$. In particular, $\tilde{\gamma}'$ is a subcomplex of $\tilde{\tau}'_{\gamma \rightarrow \tau}$. Since by Lemma \ref{upunc} every type associated with $\omega' \in \tilde{\tau}'_{\gamma\rightarrow \tau}$ is a maximal extension of one such $\gamma'$, we have $\tau'_{\gamma \rightarrow \tau}$ is realized as a coarsening of $\tilde{\tau}'_{\gamma \rightarrow \tau}$. 
 \end{proof}
 
Finally, we pass to a lattice refinement of each cone of $\tau'_{\gamma \rightarrow \tau}$ so that the edge lengths of the tropical families are integer valued. This refinement respects face inclusions by modularity. Note that $\tau'_{\gamma \rightarrow \tau}$ may be identified with a subcomplex of a subdivision of $\tau'$. This subdivision is a coarsening of the subdivision $\tilde{\tau}' \rightarrow \tau'$ compatible with restriction to faces, and from now on, we replace $\tilde{\tau}'$ with this coarsening.

\begin{example}
Consider the moduli space of punctured log maps to $X = \mathbb{P}^1 \times \mathbb{P}^1$ equipped with its toric log structure $\sum_{i} D_{i}$, with contact orders $-D^*_{1}, D^*_{1},D^*_{2},D^*_{3}$ at $4$ marked points respectively, where $D_i^*$ is a primitive integral tangent vector pointing along the $1$-dimensional cone of $\Sigma(X)$ corresponding to a toric divisor $D_i$. This has one minimal realizable tropical type, with all other tropical types with the legs marked by this unique tropical type. Let $\tau$ be the minimal tropical type, and $\tau'$ be the degenerate tropical type, which respectively are depicted below as graphs with filled and dashed edges and legs, along with their associated tropical moduli spaces. Note that the tropical modulus in either case is determined by the image of a vertex.

\begin{figure}[h]
\centering
\begin{tikzpicture}
\fill[white!70!blue, path fading = north] (0,0)--(3,0)--(3,3)--(0,3)--cycle;
\fill[white!70!blue, path fading = south] (0,0)--(3,0)--(3,-2)--(0,-2)--cycle;
\draw[black] (0,0)--(3,0);
\node at (3.5,0)  {$D_{1}^*$};
\draw[black] (0,0)--(0,3);
\node at (-.5,3) {$D_{2}^*$};
\draw[black] (0,0)--(0,-2);
\node at (-.5,-2) {$D_{3}^*$};
\draw[ball color = red] (2,0) circle (0.5mm);
\draw[->,color = red] (2,0)--(0,0);
\draw[->,color = red] (2,0)--(3,0);
\draw[->,color = red] (2,0)--(2,3);
\draw[->,color = red] (2,0)--(2,-2);
\draw[ball color = violet] (1,1) circle (0.5mm);
\draw[ball color = violet] (1,0) circle (0.5mm);
\draw[dashed, color = violet] (1,1)--(1,0);
\draw[->,dashed, color = violet] (1,1)--(1,3);
\draw[->,dashed, color = violet] (1,1)--(0,1);
\draw[->,dashed, color = violet] (1,1)--(3,1);
\draw[->,dashed, color = violet] (1,0)--(1,-2);

\fill[white!80!violet, path fading = south] (5,0)--(8,0)--(8,3)--(5,3)--cycle;
\node at (6,2) {$\tau'$};
\draw[red] (5,0)--(8,0);
\draw[black] (5,0)--(5,3);
\node at (8.5,0) {$\tau$};

\end{tikzpicture}
\end{figure}
If we consider the toric blow-up at the zero stratum given by $D_{1}\cap D_{2}$, then there exists four possible tropical lifts $\gamma$ of $\tau$ depending on the the choice of length of the legs pointing west and north. By varying the length of the leg pointing north, we produce two tropical lifts, one maximally extended tropical lift, and one type with shorter vertical leg and no two valent vertices. In either case, we see the subdivision $\tilde{\tau}'$ of $\tau'$ depicted below, along with the subcomplex $\tilde{\tau}'_{\gamma \rightarrow \tau}$ given by the unique cone of $\tilde{\tau}'$ containing the face $\gamma$.
\begin{figure}[h]
\centering
\begin{tikzpicture}
\fill[white!60!violet, path fading = north] (0,0)--(3,0)--(3,3)--(0,3)--cycle;
\draw (0,0)--(3,3);
\draw[red] (0,0)--(3,0);
\draw[black] (0,0)--(0,3);
\node  at (2,1) {$\tilde{\tau}'_{\gamma \rightarrow \tau}$};
\node at (3.5,0) {$\gamma$};
\end{tikzpicture}
\end{figure}
\end{example}

To transport the tropical construction above to the log category, we use the equivalence of $2$-categories between Artin fans and cone stacks established in \cite{stacktrop} Theorem $6.11$. First, the compatible family of subdivisions $\tilde{\tau}' \rightarrow \tau'$ for cones associated with tropical types marked by $\tau$ induces a log \'etale morphism $\widetilde{\mathcal{A}_{\mathfrak{M}_\tau}} \rightarrow \mathcal{A}_{\mathfrak{M}_\tau}$. We further consider the closed substack given by the closure of the inclusion of the closed point $\gamma^\circ \in \mathcal{A}_{\gamma}$. Note that by pulling back this construction along an inclusion $\mathcal{A}_{\tau'} \rightarrow \mathcal{A}_{\mathfrak{M}_\tau}$, the Artin fan of the resulting log algebraic stack has associated cone complex $\tau'_{\gamma \rightarrow \tau}$ before the lattice refinement ensuring integral edge lengths. After pulling back the root stack construction associated with the lattice refinement, we produce a log algebraic stack which we denote by $\mathcal{A}_{\gamma \rightarrow \tau}$. A map $S \rightarrow \mathcal{A}_{\gamma \rightarrow \tau}$ is the data of a fine and saturated log structure on $S$, together with a family of punctured tropical maps $\Gamma \rightarrow \Sigma(X)$ over $\Sigma(S)$ which are marked by $\gamma$, and are a lift of a tropical type $\tau'$ which is marked by $\tau$.  Note by construction that the closed point of $\mathcal{A}_{\gamma}$ in $\mathcal{A}_{\gamma \rightarrow \tau}$ is dense. We equip $\mathcal{A}_{\gamma \rightarrow \tau}$ with the idealized log structure with ideal $\overline{\mathcal{J}} \subset \overline{\mathcal{M}}_{\mathcal{A}_{\gamma \rightarrow \tau}}$ which when evaluated on an \'etale map $U \rightarrow \mathcal{A}_{\gamma \rightarrow \tau}$ is given by $q^{-1}(\gamma^{\vee}_\NN\setminus 0)$, for $q: \Gamma(U,\overline{\mathcal{M}}_{\mathcal{A}_{\gamma\rightarrow \tau}}) \rightarrow \overline{\mathcal{M}}_{\mathcal{A}_{\gamma \rightarrow \tau,\gamma^\circ}}$ the restriction map on sections of the ghost sheaf. Note that this ideal contains the pullback of the ideal $\overline{\mathcal{I}}$ from $\mathcal{A}_{\mathfrak{M}_{\tau}}$. The following log stack constructed as a fiber product will be central to our main result:

\begin{equation}
 \mathfrak{M}_{\gamma \rightarrow \tau}:= \mathfrak{M}_\tau \times_{\mathcal{A}_{\mathfrak{M}_\tau}} \mathcal{A}_{\gamma \rightarrow \tau}.
 \end{equation}
 By the strictness of $\mathfrak{M}_\tau \rightarrow \mathcal{A}_{\mathfrak{M}_\tau}$, this stack is a fiber product in all log categories. As a stack on the category of log schemes, its functor of points associates to a log scheme $S$ the groupoid of log stable maps $(C/S,f)$ over $S$ with target $\mathcal{X}$ such that for all cones $\sigma \in \Sigma(S)$, the tropicalization of the classifying map $\sigma \rightarrow \Sigma(\mathfrak{M}_\tau)$ factors through $\tau'_{\gamma \rightarrow \tau} \rightarrow \Sigma(\mathfrak{M}_\tau)$ for some cone $\tau' \in \Sigma(\mathfrak{M}_\tau)$.

Now consider the following partially filled commutative diagram:

\begin{equation}\label{mdi2}
\begin{tikzcd}
\mathscr{M}_\gamma \arrow[r,dashrightarrow] \arrow{d}\arrow[rr,bend left = 20] & \mathfrak{M}_{\gamma \rightarrow \tau} \arrow[r,dashrightarrow] \arrow{d} & \mathfrak{M}_\gamma \\
\mathscr{M}_{\tau} \arrow{r} & \mathfrak{M}_\tau 
\end{tikzcd}
\end{equation}

We wish to construct the two dashed morphisms in the above diagram and show their composite is the natural map $\mathscr{M}_\gamma \rightarrow \mathfrak{M}_\gamma$. We end this section by constructing one of these morphisms.

\begin{proposition}\label{prop1}
There exists a strict and idealized strict morphism $\mathscr{M}_\gamma \rightarrow \mathfrak{M}_{\gamma \rightarrow \tau}$ making the square in diagram (\ref{mdi2}) commute.
\end{proposition}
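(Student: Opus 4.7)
The plan is to build the morphism via the universal property of the fiber product defining $\mathfrak{M}_{\gamma \rightarrow \tau} = \mathfrak{M}_\tau \times_{\mathcal{A}_{\mathfrak{M}_\tau}} \mathcal{A}_{\gamma \rightarrow \tau}$. Given a family $(C^\circ/S, f) \in \mathscr{M}_\gamma(S)$, composing with stabilization (Proposition \ref{stab}) yields a map $S \to \mathscr{M}_\tau \to \mathfrak{M}_\tau$, and hence via the strict chart $\mathfrak{M}_\tau \to \mathcal{A}_{\mathfrak{M}_\tau}$ a classifying map $S \to \mathcal{A}_{\mathfrak{M}_\tau}$. What remains is to construct a lift $S \to \mathcal{A}_{\gamma \rightarrow \tau}$ of this classifying map that is compatible in log schemes; the universal property then yields $\mathscr{M}_\gamma \to \mathfrak{M}_{\gamma \rightarrow \tau}$, with commutativity of the square in (\ref{mdi2}) automatic.

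The lift is produced using Lemma \ref{mtoA}. By the construction of $\mathcal{A}_{\gamma \rightarrow \tau}$, the morphism $\mathcal{A}_{\gamma \rightarrow \tau} \to \mathcal{A}_{\mathfrak{M}_\tau}$ is obtained, locally on Artin cones $\mathcal{A}_{\tau'} \hookrightarrow \mathcal{A}_{\mathfrak{M}_\tau}$, from the subdivision $\tau'_{\gamma \rightarrow \tau} \to \tau'$ together with a root-stack refinement producing integer edge lengths. To apply Lemma \ref{mtoA} I would work \'etale locally on $S$ around a geometric point $s$ with tropical cone $\sigma = \sigma_s$ and associated tropical type $\gamma'$ of $(C^\circ/S,f)$ at $s$. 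Since the global type of the family is $\gamma$, the type $\gamma'$ is marked by $\gamma$; and since $\gamma'$ comes from a punctured stable log map to $\tilde{X}$, Theorem \ref{tlift} forces $\gamma'$ to be a tropical lift of the type $\tau'$ of the stabilization at $s$. By Lemma \ref{tropuni} this means $\gamma'$ appears as a cone of $\tau'_{\gamma \rightarrow \tau}$, so the classifying map $\sigma \to \tau'$ factors through $\gamma' \in \tau'_{\gamma \rightarrow \tau}$. This is exactly the tropical lifting criterion of Lemma \ref{mtoA}.

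Because $S$ is only fine and not necessarily saturated (due to the puncturing), one must also verify the extra factorization condition of Lemma \ref{mtoZA}: the map $(\gamma')^\vee_\NN \to \Gamma(U,\overline{\mathcal{M}}_S)^{sat}$ has to land in $\Gamma(U,\overline{\mathcal{M}}_S)$ on atomic neighborhoods. This is built into the definition of the basic monoid at a punctured section (see equation (\ref{exmon}) and the remark after Proposition 2.21 of \cite{punc}): the prestable submonoid is designed precisely so that it contains the image of $\pmb\sigma(l)^\vee_\NN$, and the lift factors through it by construction. The resulting morphism $S \to \mathcal{A}_{\gamma \rightarrow \tau}$ is unique \'etale locally and glues because the classifying data of $(C^\circ/S,f)$ is canonical.

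Finally I would check strictness and idealized strictness. The log structure on $\mathfrak{M}_{\gamma \rightarrow \tau}$ pulled back along $\mathscr{M}_\gamma \to \mathfrak{M}_{\gamma \rightarrow \tau}$ has, at a geometric point of type $\gamma'$, ghost stalk equal to the fine prestable submonoid of $(\gamma')^\vee_\NN$ associated to the chosen puncturing; this agrees with the basic monoid of $\mathscr{M}_\gamma$ at that point, giving strictness. Idealized strictness follows similarly: the ideal $\overline{\mathcal{J}}$ defined on $\mathcal{A}_{\gamma \rightarrow \tau}$ via generization to $\gamma^\circ$ restricts on each stratum $\gamma'$ to the puncturing ideal of $\gamma'$, which is precisely the ideal defining the canonical idealized structure on $\mathfrak{M}_\gamma$ from \cite{punc} Definition 3.22, pulled back to $\mathscr{M}_\gamma$. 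The main obstacle will be this last matching of non-saturated monoids and of idealized ideals stratum-by-stratum, since the definition of $\mathcal{A}_{\gamma \rightarrow \tau}$ was engineered precisely so that these coincide, but the verification requires unwinding the lattice refinement and the prestable submonoid simultaneously.
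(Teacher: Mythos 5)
Your proposal follows essentially the same route as the paper: invoke Theorem \ref{tlift} and Lemma \ref{tropuni} to see that the tropical types of geometric fibers of the universal family over $\mathscr{M}_\gamma$ land in the cones of $\tau'_{\gamma\rightarrow\tau}$, produce the strict map to $\mathcal{A}_{\gamma\rightarrow\tau}$, and conclude by the universal property of the fiber product defining $\mathfrak{M}_{\gamma\rightarrow\tau}$, with idealized strictness traced back to the puncturing ideal of \cite{punc}. One small remark: the base $S$ of a basic family is fine and saturated (only the punctured curve $C^\circ$ fails saturation), so the extra factorization condition of Lemma \ref{mtoZA} you verify is automatic here; this costs nothing but is not needed.
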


\begin{proof}
As a result of the Theorem \ref{tlift}, the geometric fibers of the universal family $(\mathfrak{C}^\circ/\mathscr{M}_\gamma,f)$ must have tropical types which are lifts of tropical types associated with strata of $\mathscr{M}_\tau$. By Lemma \ref{tropuni} and the definition of $\mathcal{A}_{\gamma \rightarrow \tau}$, we have a strict morphism $\mathscr{M}_\gamma \rightarrow \mathcal{A}_{\gamma \rightarrow \tau}$, and by \cite{punc} Proposition $3.23$, together with $\mathscr{M}_\gamma \rightarrow \mathfrak{M}_\gamma$ being idealized strict, we see that this morphism is idealized strict. 

On the other hand, we may also conclude from Theorem \ref{tlift} that the morphism $\mathscr{M}_{\gamma} \rightarrow \mathcal{A}_{\mathfrak{M}_\tau}$ induced by taking the basic log structure associated with the stabilization of the universal family over $\mathscr{M}_{\gamma}$ factors through the inclusion of log stacks $\mathcal{A}_{\gamma \rightarrow \tau} \rightarrow \mathcal{A}_{\mathfrak{M}_\tau}$ via a strict morphism $\mathscr{M}_\gamma \rightarrow \mathcal{A}_{\gamma \rightarrow \tau}$. Hence, by the universal property characterizing $\mathfrak{M}_{\gamma \rightarrow \tau}$, we have a strict and idealized strict morphism $\mathscr{M}_\gamma \rightarrow \mathfrak{M}_{\gamma \rightarrow \tau}$. 
\end{proof}

\subsection{Construction of the morphism $\mathfrak{M}_{\gamma \rightarrow \tau}\rightarrow \mathfrak{M}_\gamma$}

We now turn to the question of constructing the morphism $\mathfrak{M}_{\gamma \rightarrow \tau} \rightarrow \mathfrak{M}_\gamma$. To do so, we will construct a basic family of punctured log curves $\tilde{C}^\circ \rightarrow \tilde{\mathcal{X}}$ from the data of a non-basic family of punctured log maps curve $\overline{C}^\circ \rightarrow \mathcal{X}$ associated to a morphism $S \rightarrow \mathfrak{M}_{\gamma \rightarrow \tau}$. We proceed by using the tropical data to both determine a log \'etale modification of the prestable log curve $\tilde{C} \rightarrow \overline{C}$, as well as a puncturing of the resulting log scheme which admits a map $\tilde{C} \rightarrow \tilde{X}$. 

Consider a strict morphism $S \rightarrow \mathfrak{M}_{\gamma \rightarrow \tau}$ from a log scheme $S$. We initially assume the log structure on $S$ is induced by a morphism to $\Ag$, for $\gamma'$ a tropical lift of the tropical type $\tau'$ of $S \rightarrow \mathfrak{M}_{\tau}$. By replacing $\gamma'$ by a larger stratum if necessary, we may assume the image of $S$ intersects the smallest dimensional stratum of $\Ag$. Let $G_{\gamma'}$ and $G_{\tau'}$ be the dual graphs associated with the types $\gamma'$ and $\tau'$ respectively. The morphism $S \rightarrow \mathfrak{M}_{\tau}$ given by composing with the projection map determines a non-basic family of punctured log maps $(\overline{C}^{\circ}/S,f)$. We denote by $\overline{C}/S$ the family of ordinary log curves associated with the morphism of log stacks $S \rightarrow \mathfrak{M}_\tau$, which tropicalizes to $\Gamma/\gamma'$.

Now consider $\Gamma^\circ$, the tropicalization of $\overline{C}^\circ$. By functorial tropicalization, we have a family of tropical maps $\Gamma^\circ \rightarrow \Sigma(X)$ over $\gamma'$ of type $\tau'$. By construction of $\gamma'$, there exists a universal family of punctured tropical maps $(\tilde{\Gamma}^\circ,f)$ over $\gamma'$ given by puncturing and subdivision of the tropical family $\Gamma^\circ$. In particular, as in the proof of Proposition \ref{tcurve}, we have a subdivision $\tilde{\Gamma}\rightarrow \Gamma$ of the unpunctured tropical curves which induces the subdivision of punctured tropical curves. We summarize the setup in the following commutative diagram, with the squares on the top row cartesian:

\[\begin{tikzcd}
\widetilde{\Gamma} \arrow{d} & \widetilde{\Gamma}^\circ \arrow{l} \arrow{r} \arrow{d} & \widetilde{\Sigma(X)} \arrow{d} \\
\Gamma \arrow{dr} & \Gamma^\circ \arrow{l} \arrow{d} \arrow{r} & \Sigma(X) \arrow{d}\\
& \gamma' \arrow{r} & \Sigma(B)
\end{tikzcd}\]

Under the equivalence of $2$-categories between Artin fans and cone stacks, $S$ and $\overline{C}$ possess strict morphisms to the Artin fans associated to the cone stacks of $\gamma'$ and $\Gamma$ respectively. The subdivision $\widetilde{\Gamma} \rightarrow \Gamma$ of the family of tropical curves over $\gamma'$ then give a subdivision of Artin fans $\widetilde{\mathcal{A}}_{\overline{C}} \rightarrow \mathcal{A}_{\overline{C}}$. By pulling back this log \'etale morphism along the strict map $\overline{C} \rightarrow \mathcal{A}_{\overline{C}}$, we produce a log \'etale modification $\tilde{C} \rightarrow \overline{C}$. Since we have ensured all edge lengths are integer valued, integral points of cones of $\tilde{\Gamma}$ surject onto the integral points of $\gamma'$. By the proof of \cite{wss} Theorem $2.1.4$, $\tilde{C} \rightarrow S$ is a saturated morphism of log schemes, and in particular integral. Hence, $\tilde{C} \rightarrow S$ is flat with fiber dimension equal to the log fiber dimension which is $1$. To see that the family has reduced geometric fibers, note that $\tilde{C} \rightarrow S$ is a log smooth and saturated morphism of log schemes, hence the claim follows by \cite{satmor} Theorem $II.3.4$. Since the morphism $\tilde{C} \rightarrow S$ is log smooth, we have $\tilde{C}/S$ is a family of log curves. 

To produce a puncturing of the log curve $\tilde{C}/S$, we note that the universal family of tropical maps over $\gamma'$, $\tilde{\Gamma}^\circ \rightarrow \Sigma(\tilde{X})$ determines a prestable monoid $Q_l$ associated with each leg of $G_{\gamma'}$, hence each punctured or marked section of $\tilde{C}$. Using this data, we define a submonoid $\overline{\mathcal{M}}_{\tilde{C}^\circ} \subset \overline{\mathcal{M}}_{\tilde{C}}^{gp}$ to be $\overline{\mathcal{M}}_{\tilde{C}}$ away from neighborhoods intersecting punctured sections of $\tilde{C} \rightarrow S$, and in an atomic neighborhoods $U \subset \tilde{C}$ with unique closed stratum an open subset of the puncturing section $p_l \subset \tilde{C}$ associated with a leg $l \in L(G_{\gamma'})$, we take sections which are restrictions of sections $Q_l \subset \Gamma(U,\overline{\mathcal{M}}_{\tilde{C}}^{gp})$. We define $\mathcal{M}_{\tilde{C}^\circ} := \mathcal{M}_{\tilde{C}}^{gp} \times_{\overline{\mathcal{M}}_{\tilde{C}}^{gp}} \overline{\mathcal{M}}_{\tilde{C}^\circ}$. To define $\alpha_{\tilde{C}^\circ}$, we extend $\alpha_{\tilde{C}}: \mathcal{M}_{\tilde{C}} \rightarrow \mathcal{O}_{\tilde{C}}$ by setting $\alpha(s) = 0$ for $s \notin \mathcal{M}_{\tilde{C}}$. To see this defines a new puncturing of $\tilde{C}$ at the punctured sections, note that $s \notin \mathcal{M}_{\tilde{C}}$ if and only if $s$ induces a linear function with negative slope along the leg $l \in L(G_{\gamma'})$. Writing $\overline{s} \in \gamma^{'\vee,gp} \oplus \mathbb{Z}$ as $\overline{s} = (\overline{s}_1,\overline{s}_2)$, we thus have $\overline{s}_2 < 0$. By the choice of idealized log structure on $\mathfrak{M}_{\gamma \rightarrow \tau}$, we must have $\overline{s}_1 \in J\subset \gamma^{'\vee}$. Using this fact, we show $\alpha_{\tilde{C}^\circ}: \mathcal{M}_{\tilde{C}^\circ} \rightarrow \mathcal{O}_{\tilde{C}}$ defines a log structure. Suppose we have $s \in \mathcal{M}_{\tilde{C}^\circ} \setminus \mathcal{M}_{\tilde{C}}$ and $t \in \mathcal{M}_{\tilde{C}^\circ}$. Then $0 = \alpha_{\tilde{C}^\circ}(s)\alpha_{\tilde{C}^\circ}(t)$. If $st \in \mathcal{M}_{\tilde{C}^\circ}\setminus \mathcal{M}_{\tilde{C}}$, then $\alpha_{\tilde{C}^\circ}(st) = 0$. If $st \in \mathcal{M}_{\tilde{C}}$, then $\overline{s}\overline{t} = (x,y) \in \gamma^{'\vee}\oplus \mathbb{N}$, with $x \in J \subset \gamma^{'\vee}$. Hence, we must have $\alpha_{\tilde{C}^\circ}(st) = \alpha_{\tilde{C}}(st) = 0$. Hence, $\tilde{C}^\circ$ is a puncturing of the log curve $\tilde{C}$.

Having defined a punctured log curve $\tilde{C}^\circ$, observe by Lemma \ref{plog} that we may pushforward the punctured log structure on $\tilde{C}^\circ$ to a puncturing of $\overline{C}$ which in general will contain the prestable puncturing on $\overline{C}^\circ$. Hence, we have a morphism of log schemes $\tilde{C}^\circ \rightarrow \overline{C}^\circ$. We need to lift the morphism $\overline{C}^\circ \rightarrow \mathcal{A}_X$ to a morphism $\tilde{C}^\circ \rightarrow \widetilde{\mathcal{A}}_X$. We will do so by using Lemma \ref{mtoA}. Since the log curve $\tilde{C}^\circ \rightarrow S$ tropicalizes to the universal family of tropical curves of type $\gamma'$, we have a morphism $\Sigma(\tilde{C}^\circ) \rightarrow \Sigma(\tilde{X})$. By definition of the prestable monoid at punctured points, the factorization condition in the second statement of Lemma \ref{mtoA} holds. By pulling back the \'etale cover of $\mathcal{A}_X$ by Artin cones to $\tilde{C}^\circ$ and restricting to components of this cover, we note that there locally exist morphisms $\tilde{C}^\circ \rightarrow \widetilde{\mathcal{A}_X}$ which tropicalize to a restriction of the morphism of cone complexes above. Thus, by Lemma \ref{mtoA}, we produce the desired morphism globally.

Since $\tilde{\mathcal{X}} \cong \mathcal{X} \times_{\mathcal{A}_X} \widetilde{\mathcal{A}}_X$, the data of the morphism $\tilde{C}^\circ \rightarrow \widetilde{\mathcal{A}}_X$, together with the data of $\tilde{C}^\circ \rightarrow \mathcal{X}$ given by precomposing $\overline{C}^\circ \rightarrow \mathcal{X}$ with the partial stabilization map determines a morphism $\tilde{C}^\circ \rightarrow \tilde{\mathcal{X}}$. By considering the tropicalization of the family, $\tilde{C}^\circ \rightarrow \widetilde{\mathcal{X}}$ is also basic. Hence, the non-basic family of maps $\tilde{C}^\circ \rightarrow \mathcal{X}$ over $S$ determines a morphism $S \rightarrow \mathfrak{M}_\gamma$.

To remove the assumption of a global chart $S \rightarrow \Ag$ for the log structure on $S$, we note that there is an \'etale cover $\{S_{\gamma'}\}$ of $S$ such that the restricted morphisms $S_{\gamma'} \rightarrow \mathcal{A}_{\gamma \rightarrow \tau}$ factor through $\Ag$. Moreover the two induced log structures $S_{\gamma',\gamma''} := S_{\gamma'} \times_S S_{\gamma''}$ determined by the maps to $\mathcal{A}_{\gamma',\gamma}$ and $\mathcal{A}_{\gamma'',\gamma}$ are the same for both morphisms, as both are given by pulling back the log structure on $\mathcal{A}_{\gamma \rightarrow \tau}$. We note that the morphism $S_{\gamma',\gamma''} \rightarrow \mathfrak{M}_\gamma$ constructed above depends only on this log structure and the morphism $S_{\gamma',\gamma''} \rightarrow \mathfrak{M}_{\tau}$. Thus, the two induced morphisms to $\mathfrak{M}_{\gamma}$ are equal. Hence the locally defined morphisms to $\mathfrak{M}_{\gamma}$ glue to give a morphism $S \rightarrow \mathfrak{M}_{\gamma}$. 

The construction above defines a potential functor of fibered categories $\mathfrak{M}_{\gamma \rightarrow \tau} \rightarrow \mathfrak{M}_\gamma$ on objects. We now show this actually extends to a functor of fibered categories over schemes, giving us the required morphism $\mathfrak{M}_{\gamma \rightarrow \tau} \rightarrow \mathfrak{M}_{\gamma}$. 

\begin{proposition}
The exists a morphism $\mathfrak{M}_{\gamma \rightarrow \tau} \rightarrow \mathfrak{M}_{\gamma}$ such that for a morphism $S \rightarrow \mathfrak{M}_{\gamma \rightarrow \tau}$ with $S$ an fs log scheme, the induced morphism $S \rightarrow \mathfrak{M}_{\gamma}$ is given by the construction above. 
\end{proposition}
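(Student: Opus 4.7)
The plan is to show that every step of the object-level construction commutes with strict base change, so that the assignment $S\mapsto (S\to \mathfrak{M}_\gamma)$ extends canonically to morphisms. A morphism in the fibered category between $(S'\to \mathfrak{M}_{\gamma\rightarrow\tau})$ and $(S\to\mathfrak{M}_{\gamma\rightarrow\tau})$ over a scheme morphism $S'^u\to S^u$ amounts to a $2$-isomorphism of the two classifying maps to $\mathfrak{M}_{\gamma\rightarrow\tau}$; because the log structures on $S$ and $S'$ are pulled back from the log stack $\mathfrak{M}_{\gamma\rightarrow\tau}$, this data is equivalent to a strict log morphism $\phi\colon S'\to S$ together with a compatibility of the two objects of $\mathfrak{M}_{\gamma\rightarrow\tau}$. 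The goal is to exhibit a canonical isomorphism between $\phi^{*}\tilde{C}^\circ/S$ and $\tilde{C}^{\prime\circ}/S'$ as punctured log maps to $\tilde{\mathcal{X}}$.

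The first stage of the construction pulls back the non-basic punctured log map $\overline{C}^{\circ,pre}/S\to\mathcal{X}$ from $\mathfrak{M}_\tau$; this is clearly functorial in the object of $\mathfrak{M}_\tau$, so $\phi^*\overline{C}^{\circ,pre}=\overline{C}^{\prime\circ,pre}$. The enlargement $\overline{C}^{\circ,pre}\rightsquigarrow\overline{C}^\circ$ is defined \'etale locally at each punctured section $p_l$ by replacing the prestable monoid $Q_{l'}^{pre}$ by $Q_l$; since $Q_l$ depends only on the cone $\gamma'$ and the tropical map $\Sigma(\tilde X)\to\Sigma(X)$, which are part of the data preserved by $\phi$, this enlargement is intrinsic to the map to $\mathcal{A}_{\gamma\rightarrow\tau}$ and commutes with $\phi^*$. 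The subdivision $\tilde{C}\to\overline{C}$ is obtained by pulling back the log \'etale modification $\widetilde{\mathcal{A}}_{\overline{C}}\to\mathcal{A}_{\overline{C}}$ along the strict morphism $\overline{C}\to\mathcal{A}_{\overline{C}}$, and since Artin-fan pullbacks commute with any base change this step is also functorial.

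Next, the puncturing is defined as a fine fibre product $\tilde{C}^\circ=\overline{C}^\circ\times^{fine}_{\overline{C}}\tilde{C}$. In general, fine fibre products do not commute with base change because of integralization, but the local computation in the preceding lemma showed that in our situation the fine and coherent fibre products already have the same underlying scheme; hence $\phi^*\tilde{C}^\circ$ agrees with the fine fibre product computed on $S'$, i.e. with $\tilde{C}^{\prime\circ}$. Finally, the lift of the map $\overline{C}^\circ\to\mathcal{A}_X$ to $\tilde{C}^\circ\to\widetilde{\mathcal{A}}_X$ is \emph{unique} by Lemma~\ref{mtoA}, so the lift on $S'$ must coincide with the pullback of the lift on $S$; the map to $\mathcal{X}$ is preserved tautologically, and basicness is preserved under strict pullback. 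Combining these, $\phi^*\tilde{C}^\circ\to\tilde{\mathcal{X}}$ is canonically isomorphic to $\tilde{C}^{\prime\circ}\to\tilde{\mathcal{X}}$, giving the morphism in $\mathfrak{M}_\gamma$.

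In the absence of a global chart, one works on a strict \'etale cover $\{S_{\gamma'}\}$; the cover pulls back to an \'etale cover of $S'$, the local constructions are functorial by the above, and the gluing data descends uniquely since $\mathfrak{M}_\gamma$ is a stack. The main subtle point, and the step that required the bulk of the previous work, is the compatibility of the fine fibre product defining $\tilde{C}^\circ$ with strict pullback; once the agreement of fine and coherent fibre products on underlying schemes is in hand, each remaining step is formal, and the verification of the cocycle condition for composition of morphisms reduces to uniqueness of the lift from Lemma~\ref{mtoA} together with the universal properties of the constructions involved.
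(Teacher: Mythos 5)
Your proposal is correct and follows essentially the same strategy as the paper: identify morphisms in $\mathfrak{M}_{\gamma\rightarrow\tau}$ with (strict) morphisms of the underlying objects of $\mathfrak{M}_\tau$, and then check that each stage of the construction — the enlarged puncturing, the Artin-fan subdivision $\tilde{C}\rightarrow\overline{C}$, the fine fibre product, and the lift to $\widetilde{\mathcal{A}}_X$ — commutes with strict base change. You are somewhat more explicit than the paper about the potentially delicate points (integralization in the fine fibre product, uniqueness of the lift via Lemma \ref{mtoA}), where the paper appeals to the tropical pullback and "standard arguments with universal properties"; the substance is the same.
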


\begin{proof}
As fibered categories over log schemes with strict morphisms, we may identify $\mathfrak{M}_{\gamma \rightarrow \tau}$ as a subcategory of $\mathfrak{M}_{\tau}$. Indeed, $\mathfrak{M}_{\gamma \rightarrow \tau}$ is constructed by pulling back the subcategory $\mathcal{A}_{\gamma \rightarrow \tau} \rightarrow \mathcal{A}_{\mathfrak{M}_\tau}$. Moreover, we may identify the fibered category over schemes associated with $\mathfrak{M}_{\gamma\rightarrow \tau}$ with a full subcategory of the associated fibered category over log schemes. Hence, we consider morphisms between objects of $\mathfrak{M}_{\gamma \rightarrow \tau}$ to be morphisms between the corresponding objects of $\mathfrak{M}_{\tau}$. We must show that given a morphism $(\overline{C}/S,f) \rightarrow (\overline{C}'/S',f')$ in $\mathfrak{M}_{\gamma \rightarrow \tau}$, associated with non-basic maps to $X$, there is functorial assignment of morphisms $(\tilde{C}^\circ/S,\tilde{f}) \rightarrow (\tilde{C}^{'\circ}/S',\tilde{f}')$. Since the morphism $S \rightarrow S'$ is strict, $\overline{C}^\circ \rightarrow \overline{C}^{' \circ}$ is strict, and the cone complex morphism $\Sigma(\overline{C}^\circ) \rightarrow \Sigma(\overline{C}^{' \circ})$ maps cones isomorphically onto cones. Moreover, $\Sigma(\overline{C}^{\circ}) \rightarrow \Sigma(X)$ factors through $\Sigma(\overline{C}^{' \circ}) \rightarrow \Sigma(X)$. In particular, the modification $\widetilde{\Sigma(\overline{C}^\circ)} \rightarrow \Sigma(\overline{C}^\circ)$ pulls back from the modification $\widetilde{\Sigma(C^{'\circ})} \rightarrow \Sigma(C^{'\circ})$. Standard arguments with universal properties show $\tilde{C} \rightarrow \overline{C}$ pulls back from the modification $\tilde{C}' \rightarrow \overline{C}'$. Since the pullback of a prestable puncturing is prestable by Proposition of $2.15$ of \cite{punc}, we have the family $\tilde{C}^{\circ}/S$ is the pullback along the strict morphism $S \rightarrow S'$ of the family $\tilde{C}^{'\circ}/S'$. This gives the remaining data to define a morphism $\mathfrak{M}_{\gamma \rightarrow \tau} \rightarrow \mathfrak{M}_{\gamma}$ satisfying the desired property of the proposition.
\end{proof}

The final item we must show is that the composite $\mathscr{M}_\gamma \rightarrow \mathfrak{M}_{\gamma \rightarrow \tau} \rightarrow \mathfrak{M}_\gamma$ is isomorphic to the natural morphism $\mathscr{M}_\gamma \rightarrow \mathfrak{M}_\gamma$.

\begin{proposition}\label{unvmod}
The composite of the morphisms $\mathfrak{M}_{\gamma \rightarrow \tau} \rightarrow \mathfrak{M}_{\gamma}$ and $\mathscr{M}_\gamma \rightarrow \mathfrak{M}_{\gamma \rightarrow \tau}$  is isomorphic to $\mathscr{M}_{\gamma} \rightarrow \mathfrak{M}_{\gamma}$. 
\end{proposition}

\begin{proof}
Suppose we have a strict morphism $S \rightarrow \mathscr{M}_{\gamma}$. By working \'etale locally, we may assume the idealized log structure on $S$ is induced by a morphism $S \rightarrow \mathcal{A}_{\gamma',\gamma}$. The above data gives a punctured log map $(C^\circ/S,f: C^\circ \rightarrow \tilde{X})$ over a log scheme $S$ admitting a strict map $S \rightarrow \mathcal{A}_{\gamma',\gamma}$. Letting $(\overline{C}^{\circ}/S,\overline{f})$ be the basic prestable punctured log map given by the induced map $S \rightarrow \mathfrak{M}_{\tau}$, then the push forward of the log structure on $C^\circ$ to $\overline{C}$ gives a punctured log map $\overline{C}^\circ \rightarrow \mathcal{X}$, together with a prestabilization morphism $\overline{C}^\circ \rightarrow \overline{C}^{\circ}$ over $S$. To construct the morphism of unpunctured families $C\rightarrow \tilde{C} = \overline{C} \times_{\mathcal{A}_{\overline{C}}} \mathcal{A}_{\tilde{C}}$, we observe that we may identify $\Sigma(C)$ and $\Sigma(\tilde{C})$ by construction, so we have a morphism $C \rightarrow \mathcal{A}_{\tilde{C}}$, hence a morphism $C \rightarrow \overline{C}\times_{\mathcal{A}_{\overline{C}}} \mathcal{A}_{\tilde{C}}$ since the relevant square commutes. This morphism can be seen to commute with each of the morphisms to base log scheme $S$, hence $C \rightarrow \tilde{C}$ is a morphism of log curves over $S$. By Lemma $\href{https://stacks.math.columbia.edu/tag/05XD}{05XD}$ of \cite{SP}, there exists an open subset $S' \subset S$ of the base such that the morphism of families of prestable curves restricts to an isomorphism. Hence, to show $C \rightarrow \tilde{C}$ is an isomorphism, it suffices to show this is true for all geometric points of $S$, so we assume $S = Spec\text{ }\kk$. With this reduction, observe $C \rightarrow \tilde{C}$ is an isomorphism away from contracted components, and the morphism of tropical curves $\Sigma(C) \rightarrow \Sigma(\tilde{C})$ forces the morphism of contracted rational components to be a totally ramified cover of degree $1$, hence an isomorphism. These morphisms extend to isomorphisms of the log maps $(C^\circ,f) \rightarrow (\tilde{C}^\circ,\tilde{f})$ by the agreement of the associated tropical maps. That these isomorphisms define a natural isomorphism between the two functors between the fibered categories in question follow by the universal properties used to define the component isomorphisms.

 \end{proof}

\section{Proof of Main Theorems}

We now have the commutative diagram of stacks described in statement of Theorem \ref{mthm1}:

\begin{equation}\label{mdi}
\begin{tikzcd}
\mathscr{M}_\gamma \arrow[r] \arrow{d} & \mathfrak{M}_{\gamma \rightarrow \tau} \arrow[r] \arrow{d} & \mathfrak{M}_\gamma \\
\mathscr{M}_{\tau} \arrow{r} & \mathfrak{M}_\tau 
\end{tikzcd}
\end{equation}

Moreover, the composite upper horizontal arrow and the lower horizontal arrow are equipped with perfect obstruction theories which facilitate the definition of the virtual fundamental classes of the moduli spaces featured in the left column. Following the lead of \cite{bir_GW}, in order to prove Theorems \ref{mthm1} and \ref{mthm2}, we wish to show the square in the diagram is a pullback square, and $\mathfrak{M}_{\gamma \rightarrow \tau} \rightarrow \mathfrak{M}_\gamma$ is \'etale.
\begin{proposition}
The square in diagram (\ref{mdi}) is cartesian in all categories
\end{proposition}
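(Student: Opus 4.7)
The plan is to exhibit an inverse to the natural comparison functor
$\mathscr{M}_\gamma \rightarrow \mathscr{M}_\tau \times_{\mathfrak{M}_\tau} \mathfrak{M}_{\gamma \rightarrow \tau}$,
using the construction of the morphism $\mathfrak{M}_{\gamma\rightarrow\tau} \rightarrow \mathfrak{M}_\gamma$ built in the previous subsection, together with the pullback description $\tilde{X} = X \times_{\mathcal{X}} \tilde{\mathcal{X}}$ extracted from diagram (\ref{mod}).

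First I would reduce the statement ``cartesian in all categories'' to cartesianness of the underlying scheme-theoretic fibered categories. Since $\mathscr{M}_\tau \rightarrow \mathfrak{M}_\tau$ is strict and idealized strict by construction of the punctured moduli spaces, and $\mathscr{M}_\gamma \rightarrow \mathfrak{M}_{\gamma \rightarrow \tau}$ is strict and idealized strict by Proposition \ref{prop1}, both $\mathscr{M}_\gamma$ and the fiber product $\mathscr{M}_\tau \times_{\mathfrak{M}_\tau} \mathfrak{M}_{\gamma \rightarrow \tau}$ inherit their log and idealized structures strictly from $\mathfrak{M}_{\gamma \rightarrow \tau}$. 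Consequently, once the comparison map is an isomorphism on underlying stacks, it is automatically an isomorphism in the categories of fine (and idealized fine) log algebraic stacks.

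Next I would unpack an $S$-point of the fiber product: it consists of a basic stable punctured log map $\overline{f}: \overline{C}^\circ \rightarrow X$ of type $\tau$ in $\mathscr{M}_\tau(S)$, together with a morphism $S \rightarrow \mathfrak{M}_{\gamma \rightarrow \tau}$ whose image in $\mathfrak{M}_\tau$ matches the induced map $\overline{C}^\circ \rightarrow \mathcal{X}$. Applying the construction producing $\mathfrak{M}_{\gamma \rightarrow \tau} \rightarrow \mathfrak{M}_\gamma$ to the second piece of data yields a basic punctured log curve $\tilde{C}^\circ/S$, a partial stabilization $c: \tilde{C}^\circ \rightarrow \overline{C}^\circ$, and a morphism $\tilde{C}^\circ \rightarrow \tilde{\mathcal{X}}$, with the composite $\tilde{C}^\circ \rightarrow \tilde{\mathcal{X}} \rightarrow \mathcal{X}$ agreeing with $\overline{C}^\circ \rightarrow \mathcal{X}$ precomposed with $c$ by construction. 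Composing $c$ with $\overline{f}$ gives $\tilde{C}^\circ \rightarrow X$, and the identification $\tilde{X} = X \times_{\mathcal{X}} \tilde{\mathcal{X}}$ coming from diagram (\ref{mod}) assembles these into a unique $\tilde{f}: \tilde{C}^\circ \rightarrow \tilde{X}$. Theorem \ref{tlift} applied to geometric fibers then ensures that $(\tilde{C}^\circ/S, \tilde{f})$ is a basic stable punctured log map of type $\gamma$, hence an object of $\mathscr{M}_\gamma(S)$.

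To finish I would verify functoriality in $S$ and that this assignment is a two-sided inverse to the natural comparison functor. In one direction, Proposition \ref{unvmod} identifies the composite $\mathscr{M}_\gamma \rightarrow \mathfrak{M}_{\gamma \rightarrow \tau} \rightarrow \mathfrak{M}_\gamma$ with the canonical morphism, so round-tripping from $\mathscr{M}_\gamma$ through the fiber product returns the original object. In the other direction, the uniqueness of the lift to $\tilde{X}$ afforded by the pullback square forces the constructed object to recover the original data on the nose. The main subtlety I expect to encounter is verifying the precise compatibility between the pushforward log structure on $\overline{C}^\circ$ supplied by Lemma \ref{plog} and the enlarged puncturing constructed just before Proposition \ref{puncmon}; this compatibility is forced by the matching of induced $\mathfrak{M}_\tau$-objects in the definition of the fiber product, but making it sharp enough to invoke the pullback description of $\tilde{X}$ is the technical heart of the argument.
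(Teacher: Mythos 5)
Your proposal follows essentially the same approach as the paper's proof. Both take an $S$-point of the fiber product, compose $S \rightarrow \mathfrak{M}_{\gamma\rightarrow\tau}$ with the morphism $\mathfrak{M}_{\gamma\rightarrow\tau} \rightarrow \mathfrak{M}_\gamma$ to obtain a punctured log curve mapping to $\tilde{\mathcal{X}}$ together with a contraction to the curve over $\mathscr{M}_\tau$, then invoke the pullback description $\tilde{X} = X \times_{\mathcal{X}} \tilde{\mathcal{X}}$ from diagram (\ref{mod}) to assemble the unique lift to $\tilde{X}$, use Theorem \ref{tlift} for stability, and reduce to all categories via strictness of $\mathscr{M}_\tau \rightarrow \mathfrak{M}_\tau$. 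The one organizational difference is that you front-load the reduction from ``all categories'' to underlying stacks using strictness and idealized strictness, whereas the paper establishes the universal property in fs log schemes first and then invokes strictness at the end; these are equivalent. Your flagged subtlety about matching the pushforward log structure of Lemma \ref{plog} with the enlarged puncturing preceding Proposition \ref{puncmon} is a legitimate observation that the paper leaves implicit — it is precisely what Proposition \ref{unvmod} is engineered to resolve, and you correctly invoke that proposition for the round-trip argument — so your more explicit treatment of this point is a modest improvement in exposition rather than a deviation in method.
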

\begin{proof}

Suppose that we have the data of a fine and saturated log scheme $S$ and log morphisms $S \rightarrow \mathscr{M}_\tau$ and $S \rightarrow \mathfrak{M}_{\gamma\rightarrow \tau}$, along with an isomorphism between the induced maps $S \rightarrow \mathfrak{M}_\tau$. By composing $S \rightarrow \mathfrak{M}_{\gamma \rightarrow \tau}$ with $\mathfrak{M}_{\gamma \rightarrow \tau} \rightarrow \mathfrak{M}_{\gamma}$, this data gives the following 2-commutative diagram of log stacks:

\[\begin{tikzcd}
C^\circ \arrow{r} \arrow{d} & \mathcal{\tilde{X}} \arrow{rd} \\
\overline{C}^\circ \arrow{r} \arrow{d}  & X \arrow{r}\arrow{d} & \mathcal{X} \arrow{d} \\
S \arrow{r} & B \arrow{r} & \mathcal{A}_B
\end{tikzcd}
\]

By the universal property satisfied by $\tilde{X}$, this data defines a unique map $C^\circ \rightarrow \tilde{X}$. Stability follows from the fact that the tropical type of any fiber of $C^\circ \rightarrow S$ must be a tropical lift of a tropical type $\tau'$ associated with a stratum of $\mathfrak{M}_{\tau}$, hence by the first part of Theorem \ref{tlift} defines a stable map. Thus, we find a unique morphism of log stacks $S \rightarrow \mathscr{M}_\gamma$ lifting the two morphisms $S \rightarrow \mathscr{M}_\tau$ and $S \rightarrow \mathfrak{M}_{\gamma \rightarrow \tau}$. Therefore, $\mathscr{M}_\gamma$ satisfies the desired universal property in fs log schemes. By the strictness of $\mathscr{M}_\tau \rightarrow \mathfrak{M}_\tau$, $\mathscr{M}_\gamma$ satisfies the desired universal property in all categories. 

\end{proof}

\begin{proposition}
The morphism $\mathfrak{M}_{\gamma \rightarrow \tau} \rightarrow \mathfrak{M}_\gamma$ is \'etale. 
\end{proposition}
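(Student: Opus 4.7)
The plan is to verify étaleness by passage to Artin fans, taking advantage of the strict morphisms $\mathfrak{M}_{\gamma \rightarrow \tau} \rightarrow \mathcal{A}_{\gamma \rightarrow \tau}$ (inherited as the pullback of $\mathfrak{M}_\tau \rightarrow \mathcal{A}_{\mathfrak{M}_\tau}$) and $\mathfrak{M}_\gamma \rightarrow \mathcal{A}_{\mathfrak{M}_\gamma}$. Since strict morphisms are stable under base change and étaleness is local on the target, it suffices to (i) identify $\mathfrak{M}_{\gamma \rightarrow \tau}$ canonically with the fiber product $\mathfrak{M}_\gamma \times_{\mathcal{A}_{\mathfrak{M}_\gamma}} \mathcal{A}_{\gamma \rightarrow \tau}$ via the morphism constructed in the previous subsection, and (ii) show that the induced morphism of Artin fans $\mathcal{A}_{\gamma \rightarrow \tau} \rightarrow \mathcal{A}_{\mathfrak{M}_\gamma}$ is étale. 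The strictness of $\mathfrak{M}_{\gamma \rightarrow \tau} \rightarrow \mathcal{A}_{\gamma \rightarrow \tau}$ together with the initiality property of the Artin fan $\mathcal{A}_{\mathfrak{M}_\gamma}$ furnishes this latter morphism.

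For (ii), the equivalence of $2$-categories in Lemma \ref{equiv} translates étaleness into a statement about the associated cone stacks. The cone stack of $\mathcal{A}_{\mathfrak{M}_\gamma}$ has cones indexed by tropical types $\gamma'$ marked by $\gamma$, with the realizable locus (i.e. the idealized substructure) picking out those $\gamma'$ that arise from punctured tropical maps to $\Sigma(\tilde{X})$. By Lemmas \ref{upunc} and \ref{tropuni}, together with the lattice refinement in the construction of $\mathcal{A}_{\gamma \rightarrow \tau}$, the cone stack of $\mathcal{A}_{\gamma \rightarrow \tau}$ is precisely the subcomplex $\tau'_{\gamma \rightarrow \tau}$ of such $\gamma'$ that additionally lift some tropical type $\tau'$ marked by $\tau$, with matching lattice structures. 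Theorem \ref{tlift} shows that every realizable $\gamma'$ marked by $\gamma$ automatically arises as a lift of some $\tau'$ marked by $\tau$: a stable punctured log map of type $\gamma'$ to $\tilde{X}$ composes with $\tilde{X} \rightarrow X$ and stabilizes to a basic stable log map whose tropical type $\tau'$ is marked by $\tau$ and has $\gamma'$ as a lift. Hence the morphism of cone complexes is a bijection onto the realizable locus, making the corresponding map of Artin fans an open immersion compatible with the idealized structures, which is in particular étale.

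For (i), the forward map $\mathfrak{M}_{\gamma \rightarrow \tau} \rightarrow \mathfrak{M}_\gamma \times_{\mathcal{A}_{\mathfrak{M}_\gamma}} \mathcal{A}_{\gamma \rightarrow \tau}$ is assembled from the morphism $\mathfrak{M}_{\gamma \rightarrow \tau} \rightarrow \mathfrak{M}_\gamma$ constructed above together with the strict map $\mathfrak{M}_{\gamma \rightarrow \tau} \rightarrow \mathcal{A}_{\gamma \rightarrow \tau}$. The inverse is constructed on $S$-points by taking a family $(\tilde{C}^\circ/S, \tilde{f}) \in \mathfrak{M}_\gamma$ whose tropical data factors through $\tau'_{\gamma \rightarrow \tau}$, contracting the rational components of $\tilde{C}$ corresponding to the subdivisions $\tilde{\Gamma} \rightarrow \Gamma$, and pushing forward the punctured log structure via Lemma \ref{plog}; this recovers a non-basic family of type $\tau'$ over $S$ with target $\mathcal{X}$, producing the required $S$-point of $\mathfrak{M}_{\gamma \rightarrow \tau}$. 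Proposition \ref{unvmod} and the uniqueness of basic log structures ensure these constructions are mutually inverse.

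The main obstacle is the log-structural check in (i): ensuring that the pushforward log structure obtained from Lemma \ref{plog} coincides exactly with the enlarged puncturing carried by families in $\mathfrak{M}_{\gamma \rightarrow \tau}$ (the monoids $Q_l$ introduced around Proposition \ref{puncmon}), rather than merely the prestable puncturing of the original $\mathfrak{M}_\tau$-family. This requires explicitly tracing the monoids at punctured sections using equation (\ref{exmon}) and the definition $Q_l = (\gamma')^{\vee}_\NN \oplus \NN + f^*(\mu^\vee_\NN)$, and verifying that they agree with the pushforward monoids from the $\gamma'$-type puncturing on $\tilde{C}^\circ$; this is essentially the content of Proposition \ref{puncmon} read backwards.
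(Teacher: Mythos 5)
Your proposal takes a genuinely different route from the paper, but I believe it has gaps that are hard to repair. The paper proves the proposition by the infinitesimal lifting criterion: given an idealized strict square-zero extension $S \hookrightarrow S'$ mapping compatibly to $\mathfrak{M}_{\gamma\rightarrow\tau}$ and $\mathfrak{M}_\gamma$, one fills the diagonal by constructing $\overline{C}'$ as a pushout of $C \to C'$ and $C \to \overline{C}$ in log schemes, checking that the resulting sheaf of monoids gives a valid puncturing (using idealized strictness), and then invoking the pushout results of Lemmas $5.1.1$--$5.1.2$ of \cite{bir_GW} to lift the map to $\mathcal{X}$. Your proposal instead tries to realize $\mathfrak{M}_{\gamma\rightarrow\tau} \to \mathfrak{M}_\gamma$ as a base change of an open immersion of Artin fans, which if true would be cleaner, but several steps do not hold up.

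First, the morphism $\mathcal{A}_{\gamma\rightarrow\tau} \to \mathcal{A}_{\mathfrak{M}_\gamma}$ is not furnished by the initiality of $\mathcal{A}_{\mathfrak{M}_\gamma}$: initiality produces morphisms \emph{out of} $\mathcal{A}_{\mathfrak{M}_\gamma}$ (for any Artin fan $\mathcal{A}$ with faithful monodromy admitting a strict $\mathfrak{M}_\gamma \to \mathcal{A}$, one gets $\mathcal{A}_{\mathfrak{M}_\gamma} \to \mathcal{A}$), not morphisms \emph{into} it. One would need strictness of $\mathfrak{M}_{\gamma\rightarrow\tau} \to \mathfrak{M}_\gamma$ and an identification of $\mathcal{A}_{\gamma\rightarrow\tau}$ with $\mathcal{A}_{\mathfrak{M}_{\gamma\rightarrow\tau}}$ to run that kind of argument, and neither is automatic. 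Second, your key claim in (ii) invokes Theorem \ref{tlift} to argue that every realizable tropical type $\gamma'$ marked by $\gamma$ is a lift of a type $\tau'$ marked by $\tau$. But Theorem \ref{tlift} is a statement about stability of the underlying scheme map $C^u \to \tilde{X}$, whereas $\mathfrak{M}_\gamma = \mathfrak{M}(\tilde{\mathcal{X}}/B,\gamma)$ parametrizes \emph{prestable} punctured maps to the relative Artin fan $\tilde{\mathcal{X}}$, where there is no stability constraint. The cone complex $\Sigma(\mathcal{A}_{\mathfrak{M}_\gamma})$ therefore records all degenerations of $\gamma$ realized by prestable maps to $\tilde{\mathcal{X}}$, not merely those whose universal family is cut out by the fiber product $\overline{\Gamma}^\circ_\gamma \times_{\Sigma(X)} \Sigma(\tilde{X})$; so your bijection onto ``the realizable locus'' cannot be derived from Theorem \ref{tlift}. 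Third, even granting a well-defined Artin fan morphism and open-immersion-ness, the cartesian identification in your (i), $\mathfrak{M}_{\gamma\rightarrow\tau} \cong \mathfrak{M}_\gamma \times_{\mathcal{A}_{\mathfrak{M}_\gamma}} \mathcal{A}_{\gamma\rightarrow\tau}$, is a new and nontrivial theorem whose proof, as you acknowledge, requires tracing the punctured monoids through Lemma \ref{plog} and the $Q_l$ enlargement. That check is essentially of the same depth as the direct deformation-theoretic argument the paper uses, so the reduction does not save work even if it could be made to go through.
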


\begin{proof}
Since $\mathfrak{M}_{\gamma \rightarrow \tau} \rightarrow \mathfrak{M}_{\gamma}$ is strict and idealized strict, it suffices to show $\mathfrak{M}_{\gamma \rightarrow \tau} \rightarrow \mathfrak{M}_{\gamma}$ is idealized log \'etale. Suppose we have an idealized strict square zero extension $S \rightarrow S'$ of idealized log schemes schemes, together with the following commutative square:

\[\begin{tikzcd}
S \arrow{r} \arrow{d} & \mathfrak{M}_{\gamma \rightarrow \tau} \arrow{d} \\
S' \arrow{r}\arrow[ur,dashrightarrow] & \mathfrak{M}_\gamma
\end{tikzcd}\]

We need to fill in the dashed arrow. Note that both horizontal morphisms give punctured log maps $(C^\circ/S,f)$, $(C^{\circ'}/S',f')$ with target $\tilde{\mathcal{X}}$, and we have a pair of morphisms $C/S \rightarrow C'/S'$, $C^\circ/S \rightarrow C^{'\circ}/S'$ which respect the puncturing morphisms and morphisms to $\tilde{\mathcal{X}}$. The additional data associated to $S \rightarrow \mathfrak{M}_{\gamma \rightarrow \tau}$ is a punctured log map $(\overline{C}^\circ/S,\overline{f})$ with target $\mathcal{X}$ marked by the tropical type $\tau$, together with a partial stabilization map $C^\circ \rightarrow \overline{C}^\circ$. We construct $\overline{C}'$ by taking the pushout of $C \rightarrow C'$ and $C \rightarrow \overline{C}$ in the category of log schemes. We thus produce the following commuting diagram in the category of log schemes.

\[\begin{tikzcd}
C \arrow{r} \arrow[d] & C' \arrow[d,"c"]\\
\overline{C} \arrow{r} \arrow{d} & \overline{C}' \arrow{d} \\
S \arrow{r} & S'
\end{tikzcd}\]

Using Lemma \ref{plog}, the pushforward $c_*\mathcal{M}_{C^{'\circ}}$ defines a puncturing of the log stable curve $(\overline{C}'/S',c_*\mathcal{M}_{C'})$. Moreover, the proofs of of Lemmas $5.1.1$ and $5.1.2$ of \cite{bir_GW} apply in our context to show $\overline{C}^{'\circ}$ is the pushout of $C^\circ \rightarrow C^{'\circ}$ and $C^\circ \rightarrow \overline{C}^\circ$ in the $2$-category of logarithmic stacks. Hence, the universal property gives the desired morphism $\overline{C}^{'\circ} \rightarrow \mathcal{X}$.
\end{proof}

By the above proposition, the obstruction theory for $\mathscr{M}_\gamma \rightarrow \mathfrak{M}_\gamma$ also acts as an obstruction theory for the morphism $\mathscr{M}_\gamma \rightarrow \mathfrak{M}_{\gamma \rightarrow \tau}$. In order to prove Theorems \ref{mthm1} and \ref{mthm2}, we must show $\mathfrak{M}_{\gamma \rightarrow \tau} \rightarrow \mathfrak{M}_\tau$ is proper, idealized log \'etale, DM type, and understand the generic strata in $\mathfrak{M}_\tau$ and $\mathfrak{M}_{\gamma \rightarrow \tau}$.

\begin{proposition}
$s: \mathfrak{M}_{\gamma\rightarrow \tau} \rightarrow \mathfrak{M}_{\tau}$ is proper.
\end{proposition}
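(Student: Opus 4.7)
The plan is to reduce properness of $s$ to properness of the auxiliary tropical morphism $\mathcal{A}_{\gamma \rightarrow \tau} \rightarrow \mathcal{A}_{\mathfrak{M}_\tau}$. By the definition $\mathfrak{M}_{\gamma \rightarrow \tau} = \mathfrak{M}_\tau \times_{\mathcal{A}_{\mathfrak{M}_\tau}} \mathcal{A}_{\gamma \rightarrow \tau}$ together with the strictness of $\mathfrak{M}_\tau \rightarrow \mathcal{A}_{\mathfrak{M}_\tau}$, this fiber product is cartesian in all categories; since properness is stable under arbitrary base change, it suffices to establish that $\mathcal{A}_{\gamma \rightarrow \tau} \rightarrow \mathcal{A}_{\mathfrak{M}_\tau}$ is proper and then pull back along the strict morphism $\mathfrak{M}_\tau \rightarrow \mathcal{A}_{\mathfrak{M}_\tau}$.

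I would then factor this tropical morphism according to the three steps of the construction of $\mathcal{A}_{\gamma \rightarrow \tau}$ from the previous section: first, the log \'etale modification $\widetilde{\mathcal{A}_{\mathfrak{M}_\tau}} \rightarrow \mathcal{A}_{\mathfrak{M}_\tau}$ induced by the compatible family of subdivisions $\tilde{\tau}' \rightarrow \tau'$; second, the closed immersion into $\widetilde{\mathcal{A}_{\mathfrak{M}_\tau}}$ obtained by taking the closure of the closed point $\gamma^\circ \in \mathcal{A}_\gamma$; and third, the root stack morphism associated to the lattice refinement needed to enforce integer-valued edge lengths. Closed immersions are always proper, and a root stack is a finite (hence proper) morphism, so the composition will be proper as soon as the first factor is.

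The main obstacle is verifying properness of $\widetilde{\mathcal{A}_{\mathfrak{M}_\tau}} \rightarrow \mathcal{A}_{\mathfrak{M}_\tau}$. Under the equivalence of $2$-categories between Artin fans and cone stacks (Lemma \ref{equiv}), this reduces to the combinatorial assertion that the subdivision $\tilde{\tau}' \rightarrow \tau'$ is complete, meaning that every cone of $\tau'$ is exhausted by the images of finitely many cones of $\tilde{\tau}'$. Here I would appeal to the explicit construction recorded in Proposition \ref{tcurve}: the subdivision was obtained by pulling back $\Sigma(\tilde{X}) \rightarrow \Sigma(X)$ along the universal family $\Gamma_{\tau'}^\circ \rightarrow \Sigma(X)$ and then pushing forward to $\tau'$ via \cite{ssred} Proposition $4.4$. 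The ambient subdivision $\Sigma(\tilde{X}) \rightarrow \Sigma(X)$ is complete because $\tilde{X} \rightarrow X$ is a log modification; this completeness is preserved under pullback along the finite complex $\Gamma_{\tau'}^\circ$, and the explicit description of the pushed-forward cones $\{\cap_i \pi(\sigma_i)\}$ shows that every point of $\tau'$ lies in $\pi(\sigma)$ for some cone $\sigma$ of $\tilde{\Gamma}_{\tau'}$. Chaining the three proper factors and then base-changing along $\mathfrak{M}_\tau \rightarrow \mathcal{A}_{\mathfrak{M}_\tau}$ yields the required properness of $s$.
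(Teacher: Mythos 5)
Your proposal is correct and reaches the same conclusion, but it takes a more elementary and more explicit route than the paper. The paper's proof is essentially a one-liner: it observes that $\mathfrak{M}_{\gamma\rightarrow\tau}$ is a closed substack of a pullback of a subdivision of an Artin fan, and then simply cites \cite{bir_GW} Theorem $2.4.1$ (subdivisions of Artin fans are proper) together with the standard facts that properness is preserved under base change and restriction to closed substacks. You, on the other hand, factor $\mathcal{A}_{\gamma\rightarrow\tau}\rightarrow\mathcal{A}_{\mathfrak{M}_\tau}$ explicitly into the three steps of its construction (subdivision, closed immersion, root stack), handle the closed immersion and root stack factors directly, and then for the subdivision factor you re-derive the content of \cite{bir_GW} Theorem $2.4.1$ from scratch via the equivalence of Lemma \ref{equiv} and the combinatorial completeness of the pushed-forward subdivision from \cite{ssred}. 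What you gain is a more self-contained argument that also makes visible why the root stack step is harmless (the paper glosses over it in the proof text); what you give up is brevity, and your completeness verification is somewhat sketchy---the assertion that the cones $\{\cap_i\pi(\sigma_i)\}$ cover $\tau'$ ultimately rests on \cite{ssred} Proposition $4.4$ producing a genuine (hence complete) subdivision, so you are not quite as independent of the literature as the presentation suggests. Both routes are valid.
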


\begin{proof}
After recalling the construction of $\mathfrak{M}_{\gamma \rightarrow \tau}$ at the start of Section $4$, $\mathfrak{M}_{\gamma \rightarrow \tau}$ is a closed substack of the domain of a log \'etale morphism induced by a pullback of a subdivision of an Artin fan. Since a subdivision by \cite{bir_GW} Theorem $2.4.1$ is proper, and proper morphisms are preserved under pullback and restriction to closed substacks, the conclusion follows.
\end{proof}

\begin{proposition}
$ \mathfrak{M}_{\gamma \rightarrow \tau} \rightarrow \mathfrak{M}_{\tau}$ is idealized log \'etale.
\end{proposition}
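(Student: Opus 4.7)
The strategy is to reduce, via the cartesian defining square of $\mathfrak{M}_{\gamma \to \tau}$ and strict base change, to an Artin-fan statement, then to verify it étale-locally using Lemma B.3 of \cite{punc}, exactly as was done in the preceding text to produce the map $\mathcal{A}_{\gamma,\gamma} \to \mathcal{A}_{\tau,\tau}$.

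Since $\mathfrak{M}_{\gamma \to \tau} = \mathfrak{M}_\tau \times_{\mathcal{A}_{\mathfrak{M}_\tau}} \mathcal{A}_{\gamma \to \tau}$ is cartesian in all categories and the morphism $\mathfrak{M}_\tau \to \mathcal{A}_{\mathfrak{M}_\tau}$ is strict (and becomes idealized strict once $\mathcal{A}_{\mathfrak{M}_\tau}$ is endowed with the descent of the canonical puncturing idealized structure $\overline{\mathcal{I}}$), idealized log étaleness is preserved under this base change, so it suffices to prove that $\mathcal{A}_{\gamma \to \tau} \to \mathcal{A}_{\mathfrak{M}_\tau}$ is idealized log étale. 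I would check this étale-locally on the target, using the étale cover of $\mathcal{A}_{\mathfrak{M}_\tau}$ by Artin cones $\mathcal{A}_{\tau'}$, as $\tau'$ ranges over tropical types marked by $\tau$. By \cite{punc} Proposition 3.23, the restriction of $\overline{\mathcal{I}}$ to each $\mathcal{A}_{\tau'}$ is precisely the face ideal associated with the face $\tau \subseteq \tau'$, making the local target the idealized Artin cone $\mathcal{A}_{\tau',\tau}$.

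Over each such chart the pullback of $\mathcal{A}_{\gamma \to \tau}$ is, by Lemma \ref{tropuni}, a disjoint union over tropical lifts $\gamma' \to \tau'$ marked by $\gamma$ of Artin cones $\mathcal{A}_{\gamma'}$, after the lattice-refinement root stack. Using that the marking identifies $\gamma$ with a face of each such $\gamma'$, I would check that the generization-defined ideal $\overline{\mathcal{J}}$ restricts on each component to the face ideal attached to the face $\gamma \subseteq \gamma'$, giving the idealized Artin cone $\mathcal{A}_{\gamma',\gamma}$. Hence, after stripping away the log étale root stack (which leaves the idealized structure unaffected, since the refinement is supported in the open stratum), the local model of the morphism is exactly $\mathcal{A}_{\gamma',\gamma} \to \mathcal{A}_{\tau',\tau}$ induced by the cone inclusion $\gamma' \hookrightarrow \tau'$ restricting to $\gamma \hookrightarrow \tau$ on the distinguished faces.

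The cone inclusion $\gamma' \hookrightarrow \tau'$ has sharp dual because $\mathrm{int}(\gamma') \subseteq \mathrm{int}(\tau')$ by the definition of tropical lift, and a non-zero linear function on $\tau$ remains non-zero after restriction to $\gamma$ (again by the interior-in-interior containment), which yields the compatibility $i^*(\overline{\mathcal{I}}_{\tau',\tau}) \subseteq \overline{\mathcal{J}}_{\gamma',\gamma}$. By \cite{punc} Lemma B.3 this implies each local morphism $\mathcal{A}_{\gamma',\gamma} \to \mathcal{A}_{\tau',\tau}$ is idealized log étale, and combining with the log étaleness of the lattice root stack gives the global conclusion. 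The main technical obstacle will be the identification in the previous paragraph of the global ideal $\overline{\mathcal{J}}$ with the local face ideals $\mathcal{A}_{\gamma',\gamma}$; this reduces to showing that the generization map to $\gamma^\circ$ factors through restriction to the face of $\gamma'$ identified with $\gamma$ by the marking, so that the preimage $q^{-1}(\gamma^\vee_\NN \setminus 0)$ picks out exactly the elements of $(\gamma')^\vee_\NN$ that are non-vanishing on this face.
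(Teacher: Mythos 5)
Your proof is correct and follows essentially the same route as the paper's: reduce by the cartesian definition and strictness to the morphism of Artin fans, localize to the chart morphisms $\mathcal{A}_{\gamma',\gamma} \rightarrow \mathcal{A}_{\tau',\tau}$, and invoke \cite{punc} Lemma B.3. The paper's own proof is much terser, so you have filled in several details (the root-stack layer, the face-ideal identification, the sharpness of the dual cone map) that it treats as implicit.
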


\begin{proof}
Since $\mathfrak{M}_{\gamma \rightarrow \tau} \rightarrow \mathfrak{M}_\tau$ is defined by pulling back the morphism of idealized Artin fans $\mathcal{A}_{\tau \rightarrow \gamma} \rightarrow \mathcal{A}_{\mathfrak{M}_\tau}$, it suffices to show this latter morphism is idealized log \'etale. Since this property is local on the source and target, this property is equivalent to $\Ag \rightarrow \At$ being idealized log \'etale. This follows by \cite{punc} Lemma $B.3$. 
\end{proof}

\begin{corollary}
$\mathfrak{M}_{\gamma\rightarrow \tau} \rightarrow \mathfrak{M}_{\tau}$ is DM type.
\end{corollary}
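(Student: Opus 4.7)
The plan is to reduce the statement to the tropical input Lemma \ref{DM}, which already records that the relevant morphisms of idealized Artin cones are DM type, via the smooth-local criterion recorded just before Lemma \ref{DM}.

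First, since DM type is preserved under base change and $\mathfrak{M}_{\gamma\rightarrow\tau} \to \mathfrak{M}_{\tau}$ is obtained by pulling back the morphism of log algebraic stacks $\mathcal{A}_{\gamma\rightarrow\tau} \to \mathcal{A}_{\mathfrak{M}_\tau}$ along the strict chart $\mathfrak{M}_\tau \to \mathcal{A}_{\mathfrak{M}_\tau}$, it suffices to show $\mathcal{A}_{\gamma\rightarrow\tau} \to \mathcal{A}_{\mathfrak{M}_\tau}$ is DM type. Being DM type is étale local on the target, so I would pass to the \'etale cover of $\mathcal{A}_{\mathfrak{M}_\tau}$ by Artin cones $\mathcal{A}_{\tau'}$ indexed by the strata $\tau' \in \Sigma(\mathfrak{M}_\tau)$.

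Next I would combine the construction of $\mathcal{A}_{\gamma\rightarrow\tau}$ with the unnamed lemma immediately preceding Lemma \ref{DM}, which asserts that a morphism of Noetherian algebraic stacks is of relative DM type provided it becomes DM type after base change along a smooth chart of the target. Pulling back $\mathcal{A}_{\gamma\rightarrow\tau}\to\mathcal{A}_{\mathfrak{M}_\tau}$ along the inclusion $\mathcal{A}_{\tau'} \to \mathcal{A}_{\mathfrak{M}_\tau}$ produces (after the choice of lattice refinement guaranteeing integrality of edge lengths, and after restricting to the closure of $\mathcal{A}_\gamma \hookrightarrow \mathcal{A}_{\gamma\rightarrow\tau}$) a stack whose \'etale cover is by idealized Artin cones of the form $\mathcal{A}_{\gamma',\gamma} \to \mathcal{A}_{\tau',\tau}$, as $\gamma'$ ranges over the tropical lifts of $\tau'$ which are marked by $\gamma$. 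Each of these component morphisms falls precisely into the setup of Lemma \ref{DM}, which tells us that each such morphism of idealized Artin cones is DM type. The root stack construction used to enforce integral edge lengths is representable, hence preserves DM type, and the passage to closed substacks similarly preserves DM type, so the cover of $\mathcal{A}_{\gamma\rightarrow\tau}\times_{\mathcal{A}_{\mathfrak{M}_\tau}}\mathcal{A}_{\tau'}$ consists of DM-type morphisms over $\mathcal{A}_{\tau'}$.

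Finally, I would observe that being DM type is itself \'etale local on the source, so the conclusion on each cover piece assembles to the conclusion that $\mathcal{A}_{\gamma\rightarrow\tau}\times_{\mathcal{A}_{\mathfrak{M}_\tau}}\mathcal{A}_{\tau'}\to \mathcal{A}_{\tau'}$ is DM type, and then the smooth-local criterion for DM type promotes this to $\mathcal{A}_{\gamma\rightarrow\tau}\to \mathcal{A}_{\mathfrak{M}_\tau}$ being DM type, whence the original morphism $\mathfrak{M}_{\gamma\rightarrow\tau}\to\mathfrak{M}_\tau$ is DM type by base change. The main obstacle is mostly bookkeeping: one has to check that the cover of $\mathcal{A}_{\gamma\rightarrow\tau}$ inherited by pulling back the cover $\{\mathcal{A}_{\tau'}\}$ is indeed, after the coarsening of Lemma \ref{tropuni} and the subsequent lattice refinement, covered by idealized Artin cones of the type appearing in Lemma \ref{DM}, and that both the root stack/lattice refinement step and the restriction to the closure of $\mathcal{A}_\gamma$ do not introduce positive-dimensional stabilizers. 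Both follow formally (the first by representability of root stacks, the second because closed immersions are representable), so no essentially new argument beyond Lemma \ref{DM} and its accompanying smooth-local criterion should be needed.
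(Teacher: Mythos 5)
Your argument is correct and is essentially the paper's own proof, just with the local structure spelled out: the paper likewise observes that $\mathfrak{M}_{\gamma\to\tau}\to\mathfrak{M}_\tau$ is pulled back from a morphism that is, locally on the domain, of the form $\mathcal{A}_{\gamma',\gamma}\to\mathcal{A}_{\tau',\tau}$ and then invokes Lemma \ref{DM}. One small slip worth noting: the root stack attached to the lattice refinement is not representable (it introduces finite relative stabilizers), but it is of DM type, so your conclusion is unaffected.
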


\begin{proof}
 By definition, $\mathfrak{M}_{\gamma \rightarrow \tau} \rightarrow \mathfrak{M}_\tau$ is pulled back from an idealized log \'etale map locally on the domain of the form $\Ag \rightarrow \At$. By Lemma \ref{DM}, this morphism is DM type. Hence, $\mathfrak{M}_{\gamma \rightarrow \tau} \rightarrow \mathfrak{M}_{\tau}$ is DM type.
\end{proof}

\begin{proposition}\label{vdeg}
Assuming $dim\text{ }\gamma = dim\text{ }\tau$ with $m = |coker(\gamma_\NN^{gp} \rightarrow \tau_\NN^{gp})|$, then $\mathfrak{M}_{\gamma \rightarrow \tau} \rightarrow \mathfrak{M}_\tau$ is finite of degree $\frac{1}{m}$. 
\end{proposition}

\begin{proof}
Note that $\mathfrak{M}_{\gamma \rightarrow \tau}$ has a unique open stratum $\mathfrak{M}_{\gamma\rightarrow \tau}^\circ$ which maps to the open stratum $\mathfrak{M}_{\tau}^\circ \subset \mathfrak{M}_\tau$ of curves with tropical type $\tau$. We thus compute the degree over a generic geometric point of $\mathfrak{M}_{\gamma\rightarrow \tau}^\circ \rightarrow \mathfrak{M}_\tau^\circ$.

Observe that we have a morphism $\mathfrak{M}_\tau^\circ \rightarrow \mathcal{A}_{\tau,\tau}$, and the generic stratum of $\mathfrak{M}_{\gamma \rightarrow \tau}$ is the pullback along this morphism of the map $\mathcal{A}_{\gamma,\gamma} \rightarrow \mathcal{A}_{\tau,\tau}$. From this description of the map $\mathfrak{M}_{\gamma \rightarrow \tau}^\circ \rightarrow \mathfrak{M}_\tau^\circ$, and since $\mathcal{A}_{\tau,\tau}$ has only one geometric point, we have the degree of interest is constant over all geometric points of $\mathfrak{M}_\tau$, and the degree of the map of idealized Artin cones $\mathcal{A}_{\gamma,\gamma} \rightarrow \mathcal{A}_{\tau,\tau}$. Lemma \ref{DM} then determines this degree to be $\frac{1}{m}$.

\end{proof}

The morphism $\mathscr{M}_\tau \rightarrow \mathfrak{M}_\tau$ has an obstruction theory defined in \cite{punc}. For the morphism $\mathscr{M}_\gamma \rightarrow \mathfrak{M}_{\gamma \rightarrow \tau}$, we will use the obstruction theory defined for $\mathscr{M}_\gamma \rightarrow \mathfrak{M}_\gamma$, using the fact that $\mathfrak{M}_{\gamma \rightarrow \tau} \rightarrow \mathfrak{M}_\gamma$ is \'etale.

To compare the obstruction theories for the morphisms $\mathscr{M}_\tau \rightarrow \mathfrak{M}_{\tau}$ and $\mathscr{M}_{\gamma} \rightarrow \mathfrak{M}_{\gamma \rightarrow \tau}$, we show the obstruction theory for $\mathscr{M}_\tau \rightarrow \mathfrak{M}_{\tau}$ pulls back to the obstruction theory for $\mathscr{M}_{\gamma} \rightarrow \mathfrak{M}_{\gamma\rightarrow \tau}$. Our argument for this fact follows verbatim from that given in \cite{bir_GW} section $6$, since the obstruction sheaves are the pull and derived push of the logarithmic tangent sheaf of the target, just as for ordinary log stable maps. For completeness, we reproduce the argument here. Let $S \subset S'$ be a square zero extension with ideal $J$. If $S \rightarrow \mathscr{M}_\tau$ is a family of log stable curves, the lifts of this morphism to $S' \rightarrow \mathscr{M}_\tau$ form a torsor under $f^*T_X^{log} \otimes J$. We define $\mathcal{E}(J)$ to be the stack on $S$ of $f^*T_X^{log}\otimes J$ torsors on $C$, which gives an obstruction theory for $\mathscr{M}_{\tau} \rightarrow \mathfrak{M}_{\tau}$ in the sense of \cite{wiseob}

 Similarly, given a morphism $S \rightarrow \mathscr{M}_\gamma$, lifts $S' \rightarrow \mathscr{M}_\gamma$ form a torsor under $f^*T_{\tilde{X}}^{log} \otimes J$, and we define the sheaf $\mathcal{E}'(J)$ on $S$ as above. Now let $\overline{f}: \overline{C} \rightarrow X$ the family of punctured log maps of type $\tau$ over $S$ and $f: C \rightarrow \tilde{X}$ the family of punctured log maps of type $\gamma$ over $S$. Since the stabilization map $c: C \rightarrow \overline{C}$ is a contraction of rational components, and $\tilde{X} \rightarrow X$ is log \'etale, observe:

\[Rc_*(f^*T_{\tilde{X}}^{log}\otimes J) = Rc_*(c^*\overline{f}^*T_X^{log} \otimes J) = \overline{f}^*T_X^{log} \otimes J \]

Hence, $f^*T_{\tilde{X}}^{log} \otimes J$ torsors can be identified with $\overline{f}^*T_X^{log} \otimes J$ torsors by pullback. Moreover, under this identification, the torsor of lifts of $\overline{f}$ is identified with the torsor of lifts of $\overline{f}\circ c: C \rightarrow X$, which in turn is identified with the torsor of lifts of $f$. It follows from the previous two facts that the obstruction theories for $\mathscr{M}_{\tau} \rightarrow \mathfrak{M}_{\tau}$ and $\mathscr{M}_{\gamma} \rightarrow \mathfrak{M}_{\gamma\rightarrow \tau}$ are related by pullback, and Theorem \ref{mthm1} now follows.

In the special situation of Theorem \ref{mthm2}, the morphism $\mathfrak{M}_{\gamma \rightarrow \tau} \rightarrow \mathfrak{M}_\tau$ has degree $\frac{1}{m}$ by Proposition \ref{vdeg}. Since $\mathfrak{M}_{\gamma \rightarrow \tau} \rightarrow \mathfrak{M}_\tau$ is proper, by \cite{Cos} and \cite{pusherr}, $\pi_*[\mathscr{M}_\gamma]^{vir} = \frac{1}{m}[\mathscr{M}_\tau]^{vir}$. Alternatively, since we have verified $\mathfrak{M}_{\gamma \rightarrow \tau} \rightarrow \mathfrak{M}_\tau$ is proper and DM type, we can use the more flexible set up given by the virtual pullback formalism of \cite{vpull}, together with results of \cite{pushforward} Proposition $B.18$ to conclude this result. This also allows us to compare the virtual Chow theory of $\mathscr{M}(\tilde{X},\gamma)$ and $\mathscr{M}(X,\tau)$ more generally. Indeed, we have shown the morphism $\mathfrak{M}_{\gamma \rightarrow \tau} \rightarrow \mathfrak{M}_\tau$ is a proper morphism of DM type, hence by \cite{pushforward} Proposition $B.18$, admits a push forward operation of rational Chow groups. Then starting from a class in $A_*(\mathfrak{M}(\tilde{\mathcal{X}},\gamma))$, results from \cite{vpull} give the equality of the virtual classes in $A_*(\mathscr{M}(X,\tau))$ produced from composing virtual pullback and pushforward in both orders. 

\section{Decorated version of main theorem}
Let us now consider a decorated tropical type $\pmb\tau = (\tau,\textbf{A}(v))$, and $\overline{C}\rightarrow X$ a punctured log map having decorated tropical type $\pmb\tau$. As was shown in Theorem \ref{tlift}, if $C \rightarrow \tilde{X}$ is a stable log curve which stabilizes to $\overline{C} \rightarrow X$, then the tropical type of $C$ is a tropical lift of $\tau$. One may further ask to what extent is the decorated tropical type of $C \rightarrow \tilde{X}$ determined by the decorated tropical type of $\overline{ C}\rightarrow X$. We approach this question via the following lemma:

\begin{lemma}\label{dlift}
Suppose we have a geometric point $Spec\text{ }k \rightarrow \mathscr{M}(X/B,\tau)$, which necessarily factors through $\mathscr{M}(X/B,\pmb\tau)$ for some decoration $\pmb \tau = (\tau,\textbf{A}(v))$ of $\tau$. Consider the following cartesian diagram:

\[\begin{tikzcd}
\mathscr{M}_{\gamma,Spec\text{ }\kk} \arrow{r} \arrow{d} & \mathscr{M}(\tilde{X}/B,\gamma) \arrow{d} \arrow{r} & \mathfrak{M}_{\gamma \rightarrow \tau} \arrow{d} \\ 
Spec\text{ }\kk \arrow{r} & \mathscr{M}(X/B,\tau) \arrow{r} & \mathfrak{M}_\tau
\end{tikzcd}\]

Then $\mathscr{M}_{\gamma,Spec\text{ }\kk}$ is connected. In particular, the top horizontal arrows factor through $\mathscr{M}(\tilde{X}/B,\pmb\gamma)$ for a decoration $\pmb\gamma = (\gamma,\textbf{B}(v))$ of $\gamma$ depending only on the decorated tropical type of $Spec\text{ }\kk \rightarrow \mathscr{M}(X/B,\tau)$.

\end{lemma}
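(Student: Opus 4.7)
The plan is to reduce the connectedness of $\mathscr{M}_{\gamma, \operatorname{Spec}\kk}$ to a combinatorial statement about the cone complex $\tau'_{\gamma \to \tau}$. Combining the cartesian square of Theorem \ref{mthm1} with the defining pullback $\mathfrak{M}_{\gamma \to \tau} = \mathfrak{M}_\tau \times_{\mathcal{A}_{\mathfrak{M}_\tau}} \mathcal{A}_{\gamma \to \tau}$ (and the strictness of $\mathfrak{M}_\tau \to \mathcal{A}_{\mathfrak{M}_\tau}$), the outer rectangle in the diagram of the lemma is cartesian, yielding
\[
\mathscr{M}_{\gamma, \operatorname{Spec}\kk} \;\cong\; \operatorname{Spec}\kk \times_{\mathcal{A}_{\mathfrak{M}_\tau}} \mathcal{A}_{\gamma \to \tau}.
\]
Letting $\tau' \in \Sigma(\mathfrak{M}_\tau)$ be the tropical type of the stratum of $\mathscr{M}(X/B,\tau)$ containing the geometric point, the map $\operatorname{Spec}\kk \to \mathcal{A}_{\mathfrak{M}_\tau}$ factors through the deepest stratum of the Artin cone $\mathcal{A}_{\tau'}$. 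The problem therefore reduces to computing the fiber of $\mathcal{A}_{\gamma \to \tau}|_{\mathcal{A}_{\tau'}} \to \mathcal{A}_{\tau'}$ over this deepest stratum.

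By the construction of $\mathcal{A}_{\gamma \to \tau}$ carried out after Lemma \ref{tropuni} and the equivalence of Lemma \ref{equiv}, this restricted Artin fan corresponds to the cone complex $\tau'_{\gamma \to \tau}$ (up to lattice refinement) equipped with its idealized log structure supported on the star of $\gamma$. Thus the fiber in question is the reduced central fiber of the associated toric modification of $S_{\tau'}$ modulo $T_{\tau'}$, and its topological connectedness amounts to connectedness of the support of $\tau'_{\gamma \to \tau}$ inside $\tau'$.

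The central combinatorial step, which I expect to be the main obstacle, is showing this support is connected. Each cone of $\tau'_{\gamma \to \tau}$ parametrizes a tropical type $\gamma'$ that lifts $\tau'$ and is marked by $\gamma$; by the definition of marking in \cite{punc} Definition 3.7, this identifies $\gamma$ with a face of $\gamma'$. Carefully tracking this face through the coarsening from $\tilde{\tau}'_{\gamma \to \tau}$ in the proof of Lemma \ref{tropuni} and through the subsequent lattice refinement, one sees that $\gamma$ remains a common face of every cone of $\tau'_{\gamma \to \tau}$. Hence the complex is star-shaped about $\gamma$ and therefore connected, establishing connectedness of $\mathscr{M}_{\gamma, \operatorname{Spec}\kk}$.

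Finally, for the decoration: vertex-wise curve classes of families of stable punctured log maps are locally constant on the base, so the image of the connected stack $\mathscr{M}_{\gamma, \operatorname{Spec}\kk}$ in $\mathscr{M}(\tilde{X}/B, \gamma)$ lies in a single decorated substack $\mathscr{M}(\tilde{X}/B, \pmb\gamma)$. That $\pmb\gamma$ depends only on $\pmb\tau$ rather than on the particular geometric point follows because the decoration of $\pmb\gamma$ is pinned down combinatorially: vertices of $G_\gamma$ collapsed under stabilization carry the zero class, while non-collapsed vertices are in bijection with vertices of $G_\tau$ and inherit their decorations from $\pmb\tau$ via the compatibility $st_*(\pmb\gamma) = \pmb\tau$. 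Since both pieces of data refer only to $\pmb\tau$ and the fixed combinatorial type $\gamma$, the decorated lift $\pmb\gamma$ is determined by $\pmb\tau$.
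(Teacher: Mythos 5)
Your overall reduction follows the paper's strategy: identifying $\mathscr{M}_{\gamma,\operatorname{Spec}\kk}$ with the fiber of $\mathcal{A}_{\gamma\to\tau}\to\mathcal{A}_{\mathfrak{M}_\tau}$ over the deepest stratum of $\mathcal{A}_{\tau'}$, and translating the question into connectedness of a union of toric orbit strata. But the combinatorial step at the heart of the lemma has a genuine gap. The fiber over the closed point of $\mathcal{A}_{\tau'}$ is a union of orbits $O_\sigma$ indexed by those cones $\sigma\in\tilde\tau'$ that (i) contain $\gamma$ as a face and (ii) have relative interior lying in $\operatorname{int}(\tau')$; connectedness of this fiber is equivalent to connectedness of the graph on this set of cones whose edges are face inclusions. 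Your observation that $\gamma$ is a common face of every cone in question, while true, does not settle this: the obvious zig-zag $\sigma_1\supset\gamma\subset\sigma_2$ requires $\gamma$ itself to satisfy (ii), i.e.\ $\operatorname{int}(\gamma)\subset\operatorname{int}(\tau')$. But the definition of tropical lift forces $\operatorname{int}(\gamma)\subset\operatorname{int}(\tau)$, which lies in the boundary of $\tau'$ whenever $\tau'\neq\tau$ --- that is, whenever the geometric point sits in a deeper stratum. So the passage from ``star-shaped about $\gamma$, hence topologically connected'' to ``the central fiber is connected'' conflates topological connectedness of the support $|\tau'_{\gamma\to\tau}|$ (which your argument does establish) with zig-zag connectedness of the sub-poset of cones actually contributing to the fiber (which is what is needed, and which your argument does not give when $\gamma$ drops out of that sub-poset). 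The paper closes this gap by passing to the star fan $\operatorname{Star}(\gamma)$ in $\tau'^{gp}/\gamma^{gp}$, showing it is a subdivision of the convex polyhedron $\operatorname{Star}(\tau)\times(\tau^{gp}/\gamma^{gp})$, and then identifying the relevant cones as exactly those mapping to the interior of this convex set; convexity of that interior is what produces the zig-zags.

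Separately, the final paragraph's claim that the decoration $\pmb\gamma$ is ``pinned down combinatorially'' from $\pmb\tau$ is not quite right: $st_*:H_2(\tilde X)\to H_2(X)$ is far from injective (it has a large kernel supported on exceptional classes), so the constraint $st_*(\pmb\gamma)=\pmb\tau$ does not determine the curve classes at non-collapsed vertices of $G_\gamma$. What actually forces the decoration to be well-defined is the connectedness just established, together with local constancy of the induced decoration of $\gamma$ on each stratum and its compatibility across strata inclusions --- which is the route the paper takes. Once the connectedness gap above is repaired, this part of your argument should be rephrased along those lines.
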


\begin{proof}

Let $\pmb\tau'$ be the decorated tropical type of the geometric point $Spec\text{ }\kk \rightarrow \mathscr{M}(X/B,\tau)$. Since $\mathfrak{M}_{\gamma \rightarrow \tau} \rightarrow \mathfrak{M}_\tau$ is is pulled back from $\mathcal{A}_{\gamma \rightarrow \tau} \rightarrow \mathcal{A}_{\mathfrak{M}_\tau}$, and $Spec\text{ }\kk \rightarrow \mathcal{A}_{\mathfrak{M}_\tau}$ factors through the \'etale morphism $\mathcal{A}_{\tau'} \rightarrow \mathcal{A}_{\mathfrak{M}_\tau}$ via the unique closed point of $\mathcal{A}_{\tau'}$, we find the Deligne Mumford stack $\mathscr{M}_{\gamma,Spec\text{ }\kk}$ is given by union of strata living over the origin of a toric stack modification $S_{\widetilde{\tau'}} \rightarrow S_{\tau'}$ which lie in the closure of the stratum associated with $\gamma$. We will wish to show that zig-zags of strata inclusions connect every strata of $\mathscr{M}_{\gamma,Spec\text{ }\kk}$. 

Assuming this for the moment, consider a stratum of $\mathscr{M}_{\gamma,Spec\text{ }\kk}$, with associated type $\gamma'$. Since the stratum is connected, every geometric point of this stratum has a fixed decorated tropical type $\pmb\gamma'$. Since the graph $G_\gamma$ is a contraction of the graph $G_{\gamma'}$, the decoration of $\gamma'$ induces a unique decoration of $\pmb\gamma$ of $\gamma$ such that $\pmb\gamma'$ is marked by $\pmb\gamma$. Moreover, for any stratum $\gamma''$ which contains $\gamma'$ in its closure, the graph contraction $G_{\gamma'} \rightarrow G_{\gamma}$ factors through $G_{\gamma''} \rightarrow G_{\gamma}$. Hence, given a decoration $\pmb \gamma''$ of $ \gamma''$, the decoration of $\gamma$ induced by $\pmb\gamma'$ is also the decoration induced by $\pmb \gamma''$. Since all strata living over a point of $\mathscr{M}(X/B,\tau)$ are connected by a zig-zag of strata inclusions, the decorated types associated with all strata of $\mathscr{M}_{\gamma,Spec\text{ }\kk}$ are marked by the same decorated tropical type $\pmb\gamma$. In particular, the inclusion $\mathscr{M}_{\gamma,Spec\text{ }\kk} \subset \mathscr{M}(\tilde{X}/B,\gamma)$ factors through the substack $\mathscr{M}(\tilde{X}/B,\pmb \gamma) \subset \mathscr{M}(\tilde{X}/B,\gamma)$. 

Let us now return to our unproven assumption, and consider the cone $\tau'$ and the subdivision $\tilde{\tau}'$. We identify both cone complexes as fans sitting inside the same ambient real vector space $\mathbb{R}^m$, and $\gamma$ is a subcone of a face $\tau$ of $\tau'$. Consider the star fan of $\gamma$:
\[Star(\gamma) = \{\sigma + \gamma^{gp} \subset \tau^{'gp}/\gamma^{gp} \text{ }\mid \text{ }\gamma \subset \sigma \in \tilde{\tau}'\}.\]
This fan is a subdivision of the polyhedron:
\[Star(\tau) \times (\tau^{gp}/\gamma^{gp}) \subset \tau^{'gp}/\gamma^{gp} \cong \tau^{'gp}/\tau^{gp} \times \tau^{gp}/\gamma^{gp}.\]
 Note that to every strata in $\mathscr{M}_{\gamma,Spec\text{ }\kk}$ is associated with a cone of $Star(\gamma)$, and given an inclusion of strata $S_\sigma \subset S_{\sigma'}$, we have $\sigma' \subset \sigma$. The conclusion now follow by the convexity of the polyhedron $Star(\tau) \times \tau^{gp}/\gamma^{gp}$.

\end{proof}

From this, we deduce the decorated version of Theorem \ref{mthm1}

\begin{theorem}\label{decthm}
For a decorated lift $\pmb\gamma = (\gamma,\textbf{B}(v))$ of the decorated tropical type $\pmb \tau$, denote by $\mathscr{M}_{\pmb\tau}$ the image of $\mathscr{M}(\tilde{X}/B,\pmb\gamma) \rightarrow \mathscr{M}(X/B,\pmb\tau)$. Then the following diagram is cartesian:

\[\begin{tikzcd}
\mathscr{M}(\tilde{X}/B,\pmb\gamma) \arrow{r} \arrow{d} & \mathfrak{M}_{\gamma \rightarrow \tau} \arrow{d} \\
\mathscr{M}_{\pmb\tau} \arrow{r} & \mathfrak{M}_{\tau}
\end{tikzcd}\]
Moreover, $\mathscr{M}_{\pmb\tau}$ is a union of connected components
\end{theorem}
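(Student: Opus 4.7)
The plan is to deduce both assertions from the cartesian square of Theorem~\ref{mthm1} combined with Lemma~\ref{dlift} via a proper-surjection / quotient-map argument. Write $s:\mathscr{M}(\tilde X/B,\gamma)\to\mathscr{M}(X/B,\tau)$ for the stabilization map, which is proper by Theorem~\ref{mthm1}. Since $\mathscr{M}(X/B,\pmb\tau)$ is a union of connected components of $\mathscr{M}(X/B,\tau)$, its preimage decomposes into open and closed pieces
\[
s^{-1}(\mathscr{M}(X/B,\pmb\tau)) \;=\; \bigsqcup_{\pmb\gamma'}\mathscr{M}(\tilde X/B,\pmb\gamma'),
\]
where $\pmb\gamma'$ runs over decorated lifts of $\pmb\tau$ with underlying type $\gamma$.

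The key geometric input I would extract first is the identification
\[
s^{-1}(\mathscr{M}_{\pmb\tau}) \;=\; \mathscr{M}(\tilde X/B,\pmb\gamma).
\]
The inclusion $\mathscr{M}(\tilde X/B,\pmb\gamma)\subset s^{-1}(\mathscr{M}_{\pmb\tau})$ is immediate from the definition of $\mathscr{M}_{\pmb\tau}$. Conversely, given a geometric point $q\in s^{-1}(\mathscr{M}_{\pmb\tau})$, Lemma~\ref{dlift} tells us that the connected fiber $s^{-1}(s(q))$ lies entirely in a single $\mathscr{M}(\tilde X/B,\pmb\gamma'(s(q)))$; since $s(q)\in\mathscr{M}_{\pmb\tau}$ this fiber also meets $\mathscr{M}(\tilde X/B,\pmb\gamma)$, and by disjointness of the decorated substacks $\pmb\gamma'(s(q))=\pmb\gamma$, so $q\in\mathscr{M}(\tilde X/B,\pmb\gamma)$.

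To get the union-of-components claim, I would restrict $s$ to $s^{-1}(\mathscr{M}(X/B,\pmb\tau))$ and observe that this restriction is proper and surjective onto $\mathscr{M}(X/B,\pmb\tau)$, with surjectivity inherited by base change from surjectivity of $\mathfrak{M}_{\gamma\to\tau}\to\mathfrak{M}_\tau$ (which in turn reduces to the fact that every tropical type marked by $\tau$ admits a tropical lift in $\tau_{\gamma\to\tau}$, by the construction of Section~4). A proper surjective map of this kind is a topological quotient, so a subset of $\mathscr{M}(X/B,\pmb\tau)$ is open-and-closed iff its preimage is. The preimage $\mathscr{M}(\tilde X/B,\pmb\gamma)$ is manifestly open-and-closed in $s^{-1}(\mathscr{M}(X/B,\pmb\tau))$ as one piece of the disjoint decomposition above, so $\mathscr{M}_{\pmb\tau}$ is open-and-closed in $\mathscr{M}(X/B,\pmb\tau)$, i.e., a union of connected components.

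The cartesian claim is then immediate: base-changing the square of Theorem~\ref{mthm1} along the open-and-closed immersion $\mathscr{M}_{\pmb\tau}\hookrightarrow\mathscr{M}(X/B,\tau)$ yields
\[
\mathscr{M}_{\pmb\tau}\times_{\mathfrak{M}_\tau}\mathfrak{M}_{\gamma\to\tau}\;\simeq\;s^{-1}(\mathscr{M}_{\pmb\tau})\;=\;\mathscr{M}(\tilde X/B,\pmb\gamma),
\]
as required. The main obstacle is establishing surjectivity of the stabilization map onto $\mathscr{M}(X/B,\pmb\tau)$; this amounts to producing, for every point of $\mathscr{M}(X/B,\pmb\tau)$, at least one lift to $\mathscr{M}(\tilde X/B,\gamma)$, and it flows from the tropical lifting results built into the construction of $\mathfrak{M}_{\gamma\to\tau}$ in Section~4. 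Once this is in hand, Lemma~\ref{dlift} and the quotient-map principle combine mechanically to give both parts of the theorem.
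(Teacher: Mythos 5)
Your proposal is correct and follows essentially the same route as the paper: both reduce the cartesian claim to identifying the fiber product $\mathscr{M}_{\pmb\tau}\times_{\mathfrak{M}_\tau}\mathfrak{M}_{\gamma\to\tau}$ (equivalently $s^{-1}(\mathscr{M}_{\pmb\tau})$) with $\mathscr{M}(\tilde{X}/B,\pmb\gamma)$ by combining the connectedness-of-fibers statement of Lemma \ref{dlift} with the fact that every fiber over $\mathscr{M}_{\pmb\tau}$ meets $\mathscr{M}(\tilde{X}/B,\pmb\gamma)$, and both use properness and surjectivity of $s$ for the components claim. The only difference is in that last step, where the paper covers a connected component by the finitely many (via Proposition \ref{bound}) disjoint closed images of the $\mathscr{M}(\tilde{X}/B,\pmb\gamma_i)$, whereas you invoke the quotient-map property of a proper surjection, which sidesteps the finiteness input but is otherwise the same argument.
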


\begin{proof}
Consider the following cartesian diagram:

\[\begin{tikzcd}
\widetilde{\mathscr{M}}_\tau \arrow{r}\arrow{d} & \mathfrak{M}_{\gamma \rightarrow \tau} \arrow{d}\\
\mathscr{M}_{\pmb\tau} \arrow{r} & \mathfrak{M}_\tau
\end{tikzcd}\]

The fiber product is a closed substack of $\mathscr{M}(\tilde{X}/B,\gamma)$, and we have $\mathscr{M}(\tilde{X}/B,\pmb\gamma) \subset \widetilde{\mathscr{M}}_\tau$. To show equality, we must show all curves associated to points of $\widetilde{\mathscr{M}}_\tau$ admit markings by the decorated type $\pmb\gamma$. To see this, note that all curves marked by $\gamma$ admit a decorated marking by a unique decorated lift of $\tau$. Since $\mathscr{M}_\tau$ is defined to be the image of $\mathscr{M}(\tilde{X}/B,\pmb\gamma)$ in $\mathscr{M}(X/B,\tau)$, the fiber over every geometric point of $\mathscr{M}_\tau$ contains a punctured log curve marked by the decorated type $\pmb\gamma$. By Lemma \ref{dlift}, all curves in $\widetilde{\mathscr{M}}_\tau$ must be marked by $\pmb\gamma$. Hence, we also have $\widetilde{\mathscr{M}}_\tau \subset \mathscr{M}(\tilde{X}/B,\pmb\gamma)$, and $\mathscr{M}(\tilde{X}/B,\pmb\gamma)$ satisfies the universal property of interest. 

For the latter statement, note that since $\mathscr{M}(\tilde{X},\gamma) \rightarrow \mathscr{M}(X,\tau)$ is surjective, for any connected component $\mathscr{M} \subset \mathscr{M}(X,\pmb\tau)$, there exists a collection of moduli stacks $\mathscr{M}(\tilde{X},\pmb\gamma_i)$ whose images when intersected with $\mathscr{M}$ exhaust $\mathscr{M}$. Since $s: \mathscr{M}(\tilde{X},\pmb\gamma) \rightarrow \mathscr{M}(X,\pmb\tau)$ is proper, $s$ is in particular closed. By the above cartesian diagram, the images have trivial intersection. Since we assumed $\mathscr{M}$ was connected, we must have only one such decorated tropical type. 
\end{proof}

\section{Boundedness of punctured log maps}

After birational invariance of ordinary log Gromov-Witten invariants was established in \cite{bir_GW}, the main result of \cite{bound} demonstrated that the moduli stacks of log maps defined in \cite{LogGW} are proper Deligne-Mumford stacks for a wider range of log smooth targets than was originally proven when the stacks were first constructed. This result made crucial use of the main result of \cite{bir_GW}. In \cite{bound}, the authors first observed that by results of \cite{modmor}, it was sufficient to prove $\mathscr{M}(X/B,\beta)$ was finite type, separated, and satisfied the weak valuative criterion. In \cite{LogGW}, these properties were proven to hold assuming the sheaf $\overline{\mathcal{M}}_X^{gp}$ was globally generated. The key observation of \cite{bound} used to weaken this assumption was that this property held for $\tilde{X}$ a log \'etale modification of $X$, and using log \'etale invariance to prove the remaining properties.

After the moduli stacks of punctured log curves $\mathscr{M}(X/B,\pmb\tau)$ were constructed in \cite{punc}, a similar issue arose in proving the stacks $\mathscr{M}(X/B,\pmb\tau)$ were proper Deligne-Mumford stacks. To prove this property, the sheaf $\overline{\mathcal{M}}_X\otimes_{\mathbb{Z}} \mathbb{Q}$ was assumed to be globally generated. Using the main results of this paper and the proof strategy of \cite{bound}, we can now show the stacks $\mathscr{M}(X/B,\tau)$ are finite type without needing the global generation assumption.

We fix a decorated tropical class $\beta$ on $X$ with total curve class $\textbf{A}$, and note that by \cite{punc} Theorem $3.17$, $\mathscr{M}(X/B,\beta)$ satisfies the valuative criterion for properness without the global generation assumption. Since by \cite{decomp} Proposition $2.34$, every realizable decorated tropical type $\pmb\tau$ is realized as a closed substack of $\mathscr{M}(X/B,\beta)$ for some decorated tropical class $\beta$ via base change along a finite morphism, $\mathscr{M}(X/B,\pmb\tau)$ also satisfies the valuative criterion for properness. It remains to show that $\mathscr{M}(X/B,\pmb\tau)$ is finite type.

\begin{proposition}\label{bound}
Supposing $X \rightarrow B$ is projective, then $\mathscr{M}(X/B,\pmb\tau) \rightarrow B$ is finite type.
\end{proposition}

\begin{proof}
The proof is essentially \cite{bound} Proposition $5.3.1$ and $5.3.2$. We make necessary changes to account for the negative contact setting. Let $\pi: \tilde{X} \rightarrow X$ be a log \'etale modification induced by a projective subdivision such that $\mathcal{M}_{\tilde{X}}$ is globally generated. We pick a relatively ample line bundle $\mathcal{L}$ on $\tilde{X}$. Then $\mathcal{L} = \bigotimes_i \mathcal{L}_i^{m_i}$, for $m_i$ negative integers and $\mathcal{L}_i$ the normal bundle of the exceptional divisors of $\tilde{X}\rightarrow X$. By the logarithmic balancing condition, for any $(C,f) \in \mathscr{M}(\tilde{X}/B,\gamma)(Spec\text{ }\kk)$ with $\gamma$ a lift of $\tau$, $deg(f^*\mathcal{L}) = \sum_{i,j} m_ic_j(E_i)$, where $c_j(E_i)$ is contact order of the punctured point $p_j$ with the exceptional divisor $E_i$ as encoded in the tropical type $\tau$. Note that we might have $c_j(E_i)< 0$. 

We let $\gamma$ be a tropical lift of the underlying tropical type $\tau$. For any fixed decoration $\pmb\gamma$ of $\gamma$ by curve classes in $NE(\tilde{X})$, by \cite{punc} Theorem $3.12$, $\mathscr{M}(\tilde{X}/B,\pmb\gamma) \rightarrow B$ is finite type. From the decorated version of Theorem \ref{mthm1}, we have a proper surjective morphism $\coprod_{\pmb\gamma} \mathscr{M}(\tilde{X}/B,\pmb\gamma) \rightarrow \mathscr{M}(X/B,\pmb\tau)$ with the union begin taken over all decorated lifts of $\pmb\tau$ with underlying type $\gamma$. If the total degree of all decorated lifts with respect to a choice of relative ample divisor for $\tilde{X} \rightarrow B$ is bounded, the proposition follows.

To demonstrate this statement, we pick a relative ample line bundle $M$ for $X \rightarrow B$, and consider the line bundle $\mathcal{L} \otimes \pi^*(M)$ on $\tilde{X}$. This gives a relative ample bundle for $\tilde{X} \rightarrow B$, and on a curve $(C,f)\in \mathscr{M}(\tilde{X}/B,\gamma)$, we have:

\[deg(f^*(\mathcal{L}\otimes \pi^*(M))) = deg(f^*(\mathcal{L})) + deg(f^*\pi^*M) = \sum_{i,j} m_ic_j(E_i) + deg(f^*\pi^*M).\]
Thus, by the projection formula, if $\pmb\gamma$ is a decorated lift of $\pmb\tau$, the total degree of $\pmb\gamma$ have $\mathcal{L} \otimes \pi^*M$ degree bounded above by 
\[\textbf{A}\cdot M + \sum_{i,j} m_ic_j(E_i) .\]
Since $M$ is relatively ample, we in particular find that the total degree of any decorated lifts is bounded above, and the proposition follows.

\end{proof}

For a general decorated tropical type $\pmb\tau$, without assuming realizability, we find that $\mathscr{M}(X/B,\pmb\tau)$ is finite type if and only if there are finitely many non-empty realizable tropical types $\tau'$ marked by $\tau$ that are minimal realizable tropical types dominating $\tau$ in the sense of \cite{punc} Remark $3.29$. Indeed, by loc cit., the closure of a maximal strata in $\mathscr{M}(X/B,\tau)$ are the images of the stacks $\mathscr{M}(X/B,\tau')$. Moreover, fixing a decoration of $\pmb\tau$ bounds the degree of a component of a curve in $\mathscr{M}(X/B,\pmb\tau)$ with respect to an ample divisor on $X$. Since each realizable tropical type $\tau'$ above has only finitely many vertices, there are only finitely many possible decorations of $\tau'$. 

\section{Birational invariance of canonical wall structures}

We now consider the question of birational invariance of canonical wall structures. Let $(X,D)$ be a log Calabi Yau in the sense of \cite{int_mirror} satisfying the hypotheses in Assumption $1.1$ or $1.2$ of \cite{scatt} and let $D = \sum_i D_i$ be a decomposition in terms of irreducible components. In particular, we must have either $c_1(T^{log}_X)$ is nef or antinef, or we may write $c_1(\Omega_X(log\text{ }D)) = K_X  + D =_\mathbb{Q} \sum_i a_iD_i$ with $a_i\ge 0$. In the latter case, the choice of representative of $K_X + D$ determines a subcomplex $\mathscr{P} \subset \Sigma(X)$ called the Kontsevich-Soibelman skeleton, with $1$ dimensional cones associated to divisors $D_i$ with $a_i = 0$. We call such divisors good, and all other components of $D$ bad divisors. In \cite{scatt}, Gross and Siebert construct a consistent wall structure $\mathscr{S}_{can}$ on $\mathscr{P} \subset \Sigma(X)$, via data attached to wall types with target $(X,D)$, see \cite{scatt} for details. We recall the definition of a wall type, which encode walls in $\Sigma(X)$.

\begin{definition}

A wall type $\tau = (G,\sigma,u)$ is a type of punctured log curve in a target $X$ satisfying:
\begin{enumerate}
\item $G$ is a genus $0$ graph with $L(G) = \{L_{out}\}$ with $\sigma(L_{out}) \in \mathscr{P}$, and $u_\tau := u(L_{out}) \not= 0$. 
\item $\tau$ is realizable and balanced
\item Let $h: \Gamma(G,l) \rightarrow \Sigma(X)$ be the corresponding universal family of tropical maps, and $\tau_{out}$ the cone corresponding to $L_{out}$. Then dim $\tau = n-2$, and dim $h(\tau_{out}) = n-1$. 
\end{enumerate}

\end{definition}

In particular, these types have a single punctured leg, which spans a codimension $1$ subcone contained in $B$, giving walls of the wall structure. Moreover, since $dim\text{ }\tau = n-2$, and $dim\text{ }h(\tau_{out}) = n-1$, a dimension count shows that among tropical curves of a fixed wall type, the tropical curve is uniquely determined by the image of the unique leg. In what follows, for a wall type $\tau$, the universal tropical family gives a morphism of lattices $h: \tau_{L_{out},\NN}^{gp} \rightarrow \pmb\sigma(L_{out})_\NN^{gp}$, and we define $k_{\tau} = |coker\text(h)_{tors}|$. 

Suppose that $(\tilde{X},\tilde{D}) \rightarrow (X,D)$ is a log \'etale modification, induced by a subdivision $\Sigma(\tilde{X}) \rightarrow \Sigma(X)$. In the following lemma, we show that the pair $(\tilde{X},\tilde{D})$ satisfies the necessary conditions for the construction of the canonical wall structure: 

\begin{lemma}
$(\tilde{X},\tilde{D})$ is log Calabi-Yau in the sense of \cite{int_mirror}, and the restriction of the subdivision $\Sigma(\tilde{X}) \rightarrow \Sigma(X)$ when restricted to the Kontsevich-Soibelman skeleton $\widetilde{\mathscr{P}} \subset \Sigma(\tilde{X})$ gives a morphism of cone complexes $\widetilde{\mathscr{P}} \rightarrow \mathscr{P}$. In particular, $(\tilde{X},\tilde{D})$ satisfies Assumption $1.1$ of \cite{scatt}.
\end{lemma}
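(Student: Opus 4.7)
The plan is to use log \'etaleness of $\pi:\tilde{X}\to X$ to transport the canonical divisor formula, then run a local toric discrepancy calculation to read off both the non-negativity of coefficients in $K_{\tilde{X}}+\tilde{D}$ and the rays of the Kontsevich-Soibelman skeleton $\widetilde{\mathscr{P}}$. First, since $\pi$ is log \'etale we have $\pi^*\Omega^1_X(\log D) \cong \Omega^1_{\tilde{X}}(\log \tilde{D})$; taking top exterior powers yields $\pi^*(K_X+D) = K_{\tilde{X}}+\tilde{D}$. The log Calabi-Yau property in the sense of \cite{int_mirror} is then preserved: a trivialization of $\omega_X(D)|_U$ pulls back to a trivialization on $\pi^{-1}(U)$, since $\pi$ is an isomorphism away from strata of higher codimension.

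Next, I would decompose this pullback in terms of irreducible components of $\tilde{D}$. Working \'etale-locally on the Artin fan $\mathcal{A}_X$, the calculation reduces to a standard toric discrepancy formula: each new ray $\rho \in \Sigma(\tilde{X})$ lies in the relative interior of a unique minimal cone $\sigma_\rho\in\Sigma(X)$, whose rays correspond to some subset $\{D_{i_1},\ldots,D_{i_k}\}$ of components of $D$, and the primitive vector $v_\rho$ expands uniquely as $v_\rho = \sum_l c_l v_{\rho_{i_l}}$ with all $c_l > 0$. The pullback then reads
\[
K_{\tilde{X}}+\tilde{D} =_{\mathbb{Q}} \sum_i a_i D'_i + \sum_\rho \Big(\sum_l a_{i_l} c_l\Big) E_\rho,
\]
where $D'_i$ is the strict transform of $D_i$ and $E_\rho$ is the divisor corresponding to $\rho$. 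Since $a_i\geq 0$ and $c_l>0$, every coefficient is non-negative, verifying Assumption $1.1$ of \cite{scatt} for $(\tilde{X},\tilde{D})$.

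For the skeleton, a component of $\tilde{D}$ is good precisely when its coefficient above vanishes. The coefficient of $D'_i$ is $a_i$, so $D'_i$ is good iff $D_i$ was good. The coefficient of $E_\rho$ is $\sum_l a_{i_l}c_l$, which vanishes iff every $a_{i_l}=0$, i.e.\ iff the minimal cone $\sigma_\rho$ lies in $\mathscr{P}$. Thus a ray of $\Sigma(\tilde{X})$ lies in $\widetilde{\mathscr{P}}$ iff its image under the subdivision sits inside the support of $\mathscr{P}$, and by the same token a cone of $\Sigma(\tilde{X})$ lies in $\widetilde{\mathscr{P}}$ iff its image lies in $\mathscr{P}$. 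The restriction of $\Sigma(\tilde{X})\to\Sigma(X)$ then gives the claimed morphism $\widetilde{\mathscr{P}}\to\mathscr{P}$, exhibiting $\widetilde{\mathscr{P}}$ as a subdivision of $\mathscr{P}$.

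The main obstacle is matching conventions between the global geometric setup and the local toric discrepancy calculation, and checking that the ``log Calabi-Yau'' notion in \cite{int_mirror} is truly preserved by log \'etale pullback rather than demanding some additional effectivity on the exceptional locus. Both points should come out cleanly from the Artin fan description $X\to\mathcal{A}_X$, on which $\Sigma(\tilde{X})\to\Sigma(X)$ is realized by a toric modification and the global pullback formula agrees \'etale-locally with the toric one used above.
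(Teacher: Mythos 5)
Your proof is correct and follows essentially the same route as the paper: pull back the effective $\mathbb{Q}$-divisor representative of $K_X+D$ using log \'etaleness, observe the exceptional coefficients are non-negative combinations of the $a_i$, and read off that an exceptional divisor is good exactly when its center lies in $\mathscr{P}$ (your explicit toric discrepancy coefficients $c_l$ just make the paper's $m_{i,j}\ge 0$ concrete). The only thing the paper adds is a one-sentence dispatch, via the projection formula, of the alternative hypothesis where $c_1(T_X^{\log})$ is nef or anti-nef, which your write-up omits but which is immediate.
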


\begin{proof}
The lemma is clear by the projection formula in the case $c_1(T_X^{log})$ is nef or antinef, so we assume $(X,D)$ is log Calabi-Yau in the latter sense. First, observe that since $\tilde{X} \rightarrow X$ is log \'etale, $\Omega_{\tilde{X}}(log\text{ }\tilde{D}) = \pi^*\Omega_X(log\text{ }D)$. Thus, $\Lambda^{dim\text{ }\tilde{X}}\Omega_{\tilde{X}}(log\text{ }\tilde{D}) = \pi^*(\Lambda^{dim\text{ }X}\Omega_X(Log\text{ }D))$. The Kontsevich-Soibelman skeleton on $X$ depends on choice of $\mathbb{Q}$-Cartier divisor $\sum_i a_iD_i$ which is numerically equivalent to $c_1(T_X(log\text{ }D))$. Suppose that the exceptional locus of the log \'etale map $\tilde{X} \rightarrow X$ has center the stratum $D_I := \cap_{i \in I} D_i$. For all $j \notin I$, then $\pi^*(D_j)$ is equal to an irreducible component of $\tilde{D}$. If $i \in I$, then $\pi^*(D_i) = \tilde{D}_i + \sum_{j}m_{i,j}E_j$, with $E_j$ an enumeration of exceptional divisors, $\tilde{D}_i$ is the strict transform of $D_i$, and $m_{i,j}\ge 0$. Now observe:

\[c_1(\Omega_{\tilde{X}}(log\text{ }\tilde{D})) \equiv_{\mathbb{Q}} \pi^*(\sum_i a_iD_i) = \sum_{k \notin I} a_k\tilde{D}_k + \sum_{i \in I} a_i(\tilde{D}_i + \sum_{j}m_{i,j}E_j) = \sum_i a_i\tilde{D}_i + \sum_{i,j} m_{i,j}a_iE_j\]

Since the $\mathbb{Q}$-divisor on the right-most side of the above equalities is supported on $\tilde{D}$, effective and is numerically equivalent to $c_1(\Omega_{\tilde{X}}(log\text{ }\tilde{D}))$, $(\tilde{X},\tilde{D})$ is log Calabi-Yau in the sense of \cite{int_mirror}. In particular, the choice of $\mathbb{Q}$-divisor above determines a Kontsevich-Soibleman skeleton $\widetilde{\mathscr{P}} \subset \Sigma(\tilde{X})$. 

If $D_I$ was only contained in good divisor, then $\sum_{i,j} m_{i,j}a_i = 0$, and we have a decomposition of $c_1(\Omega_{\tilde{X}}(log\text{ }\tilde{D}))$ with associated Kontsevich-Soibelman skeleton given by the strict transform of the good components of $D$, together with the exceptional divisor $E$. If $D_I$ were contained in at least one bad divisor, then since $a_i \ge 0 $ for all $i$, the exceptional divisors are bad divisors, and the Kontsevich-Soibelman skeleton would only be the strict transforms of the good divisors in $X$. In either case, the restriction of the subdivision $\Sigma(\tilde{X}) \rightarrow \Sigma(X)$ to the Kontsevich-Soibelman skeleton of $\tilde{X}$ induced by $\pi^*(\sum_i a_iD_i)$ has image the Kontsevich-Soibelman skeleton of $X$. 

Since all log \'etale modifications are centered on log strata of $X$, the conclusion follows. The fact that $\widetilde{\mathscr{P}}\subset \Sigma(\tilde{X})$ satisfies Assumption $1.1$ or $1.2$ of \cite{scatt} follows by the fact that $\mathscr{P} \subset \Sigma(X)$ does since the morphism of cone complexes is simply a subdivision.

\end{proof}

Having specified the Kontsevich-Soibelman skeleton $\widetilde{\mathscr{P}} \subset \Sigma(\tilde{X})$, we let $\widetilde{\mathscr{S}}_{can}$ be the canonical wall structure on $\widetilde{\mathscr{P}}$. Suppose we have a wall type $\tau$ with target $X$, with $u_\tau$ the contact order of the unique leg. In particular, $dim\text{ }\tau = n-2$. Note that the tropical modulus of a fixed wall type is completely determined by the image of the unique vertex $v_{out}$ contained in the leg $L_{out} \in L(G_\tau)$. Hence, the real cone complex $\tilde{\tau}$ is equal to the induced subdivision of the codimension $2$ cone $h(\tau_{v_{out}}) \subset \pmb\sigma(v_{out}) \in \Sigma(X)$. We consider the tropical lifts $\gamma$ which correspond to top tropical dimension cones of this subdivision, i.e subdivisions with dim $\gamma = n-2$. Note that all of the top tropical dimension lifts have a unique leg as well, are realizable, balanced and dim $h(\gamma_{out}) = n-1$. Since the subdivision restricts to a morphism of Kontsevich-Soibelman skeletons, we conclude that all top tropical dimension lifts of $\tau$ define wall types of $(\tilde{X},\tilde{D})$. Due to the fact that the unique leg of $\tau$ might be subdivided by a lift, we get a wall type in $(\tilde{X},\tilde{D})$ for every choice of contact order in $\Sigma(\tilde{X})$ which projects to $u_\tau$. In total however, after projecting these wall to $\Sigma(X)$, by varying over both top dimensional tropical lifts of $\tau$ as well as possible leg lengths, the collections of associated walls has support equal to the the wall associated to $\tau$.

We wish to show all contributing wall types to $\widetilde{\mathscr{S}}_{can}$ arise in this manner.
\begin{lemma} \label{awal}
Assume $\mathscr{M}(\tilde{X},\pmb\gamma)$ is non-empty, for a decorated wall type $\pmb\gamma$. If $N_{\pmb\gamma} := deg[\mathscr{M}(\tilde{X},\pmb\gamma)]^{vir} \not= 0$, then $\pmb\gamma$ is a decorated lift of a decorated wall type $\pmb\tau$ on $(X,D)$.
\end{lemma}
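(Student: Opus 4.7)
My plan is to take $\pmb\tau$ to be the decorated tropical type obtained by stabilizing a punctured log map of decorated type $\pmb\gamma$; Theorem \ref{tlift} and its decorated version Theorem \ref{decthm} then guarantee that $\pmb\gamma$ is a decorated lift of $\pmb\tau$, so it suffices to verify the three defining conditions of a wall type on $(X,D)$ for $\pmb\tau$. The combinatorial conditions are inherited directly: stabilization contracts only rational components with at most two special points, so $G_\tau$ is again genus zero with the same unique leg $L_{out}$ as $G_\gamma$, while realizability and balancing pass through any stabilization of a realizable balanced tropical map. The membership $\pmb\sigma(L_{out}^\tau) \in \mathscr{P}$ follows from $\pmb\sigma(L_{out}^\gamma) \in \widetilde{\mathscr{P}}$ together with the preceding lemma, which shows the subdivision $\Sigma(\tilde{X}) \rightarrow \Sigma(X)$ restricts to a morphism $\widetilde{\mathscr{P}} \rightarrow \mathscr{P}$.

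For the remaining conditions $\dim \tau = n-2$, $\dim h_\tau(\tau_{out}) = n-1$, and $u_\tau \neq 0$, the inclusion $\gamma \hookrightarrow \tau$ from Definition \ref{dtlift} yields $\dim \tau \geq \dim \gamma = n-2$, while the projection $h_\gamma(\gamma_{out}) \twoheadrightarrow h_\tau(\tau_{out})$ gives $\dim h_\tau(\tau_{out}) \leq n-1$ (and degeneracy of $u_\tau$ would force a further drop in the latter). I would prove the reverse inequalities by contradiction: if any of them fails, the virtual dimension of $\mathscr{M}(X/B,\pmb\tau)$ strictly exceeds $0$, while $[\mathscr{M}(\tilde{X}/B,\pmb\gamma)]^{vir}$ has dimension $0$ since $\pmb\gamma$ is a wall type. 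In such a degenerate situation, the image of $\mathfrak{M}_{\gamma \rightarrow \tau} \rightarrow \mathfrak{M}_\tau$ lies in a proper closed substack arising from the subdivision $\tilde\tau \rightarrow \tau$, so $s_*[\mathfrak{M}_{\gamma \rightarrow \tau}]$ sits in a Chow group of strictly lower grading than $[\mathfrak{M}_\tau]$. Applying the compatibility of virtual pullback with proper pushforward established in the proof of Theorem \ref{mthm2}, together with the pushforward results of \cite{vpull} and \cite{pushforward}, the zero-cycle $s_*[\mathscr{M}(\tilde{X}/B,\pmb\gamma)]^{vir} \in A_0(\mathscr{M}(X/B,\pmb\tau))$ must then vanish, contradicting $N_{\pmb\gamma} = \deg \, s_*[\mathscr{M}(\tilde{X}/B,\pmb\gamma)]^{vir} \neq 0$.

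The main obstacle will be this final virtual-degree vanishing argument, namely rigorously converting a cone-dimension drop from $\gamma$ to $\tau$ into vanishing of a zero-cycle obtained by virtual pullback along $\mathscr{M}(X/B,\pmb\tau) \rightarrow \mathfrak{M}_\tau$. Careful bookkeeping of Chow gradings, combined with the proper-pushforward formulas of \cite{vpull} and \cite{pushforward} and the commutativity of virtual pullback with proper pushforward from Theorem \ref{mthm2}, should make the argument precise; in the borderline cases $\dim h_\tau(\tau_{out}) < n-1$ or $u_\tau = 0$ one may additionally need to verify that the log balancing condition on $\gamma$ forces the corresponding wall on $\tilde{X}$ to map into the exceptional locus, which can be handled by direct inspection of the relevant tropical maps.
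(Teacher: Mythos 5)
The overall strategy is the same as the paper's: take $\pmb\tau$ to be the decorated type of the stabilization, note that $\pmb\gamma$ is then a decorated lift and that the combinatorial conditions of a wall type are inherited, and argue by contradiction that $\dim\tau > n-2$ would force $N_{\pmb\gamma}=0$. However, there are two concrete problems with your contradiction argument. First, the virtual dimension claim is backwards: when $\dim\tau > n-2$, the paper (citing \cite{punc} Theorem 3.28) observes that $\mathscr{M}(X/B,\pmb\tau)$ has \emph{negative} virtual dimension, not a virtual dimension strictly exceeding $0$ — increasing the tropical modulus $\dim\tau$ drops the dimension of $\mathfrak{M}_\tau$ and hence the virtual dimension of $\mathscr{M}(X,\pmb\tau)$.

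Second, and more importantly, the vanishing of $s_*[\mathscr{M}(\tilde{X},\pmb\gamma)]^{vir}$ does not follow from the Chow-grading bookkeeping you describe. Although $s_*[\mathfrak{M}_{\gamma\rightarrow\tau}]$ does live in a lower-graded Chow group of $\mathfrak{M}_\tau$, the relative virtual dimension of $\mathscr{M}(X,\pmb\tau)\rightarrow\mathfrak{M}_\tau$ is the same as that of $\mathscr{M}(\tilde{X},\pmb\gamma)\rightarrow\mathfrak{M}_{\gamma\rightarrow\tau}$ (the obstruction theories are compatible), so $\epsilon^!\bigl(s_*[\mathfrak{M}_{\gamma\rightarrow\tau}]\bigr)$ is a perfectly honest zero-cycle on $\mathscr{M}_{\pmb\tau}$ — it does not vanish for degree reasons. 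The paper supplies the missing ingredient: since $\mathfrak{M}_{\gamma\rightarrow\tau}$ is reduced of pure dimension (being \'etale over the reduced pure-dimensional $\mathfrak{M}_\gamma$), one can restrict to its irreducible components, note these are integral stacks stratified by quotient stacks, and invoke \cite{int_mirror} Lemma A.12 to conclude that the virtual pullback of each component contributes zero. Without this lemma (or some substitute), the assertion that the zero-cycle vanishes remains unjustified, and you flagged this yourself as "the main obstacle"; the paper resolves it precisely by this appeal to Lemma A.12 rather than by any direct Chow-grading argument.
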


\begin{proof}
Suppose we have a decorated wall type $\pmb\gamma$ on $\tilde{X}$ such that $\mathscr{M}(\tilde{X},\pmb\gamma)$ is non-empty. This tropical type has a single leg $l$, corresponding to a punctured point with contact order $u_\gamma$. This contact order can be identified with a unique contact order in $\Sigma(X)$ after forgetting the subdivision. Composing with the log \'etale modification followed by stabilizing gives us a morphism of moduli spaces $\mathscr{M}(\tilde{X},\pmb\gamma) \rightarrow \mathscr{M}(X,u_\gamma)$, where the codomain is the moduli space of the potentially unrealizable tropical type with a single vertex at the origin, and a leg with contact order $u_\gamma$. As remarked in the discussion following Proposition \ref{stab}, there exists a unique decorated and realizable tropical type $\pmb\tau$ of punctured curve to $X$, for which $\pmb\gamma$ is a lift. In particular, we have dim $\tau \ge \text{dim }\gamma = n-2$. By replacing the moduli stack $\mathscr{M}(X,\tau)$ with the closed substack $\mathscr{M}_{\pmb\tau}$ given by the image of $\mathscr{M}(\tilde{X},\pmb\gamma)$, by Theorem \ref{decthm}, we produce the following cartesian diagram:

\[\begin{tikzcd}
\mathscr{M}(\tilde{X},\pmb\gamma) \arrow{r}\arrow{d} & \mathfrak{M}_{\gamma \rightarrow \tau} \arrow{d}\\
\mathscr{M}_{\pmb\tau} \arrow{r} & \mathfrak{M}_\tau
\end{tikzcd}\]

Suppose $\pmb\tau$ is not a decorated wall type. As $\tau$ inherits properties $1$ and $2$ of a wall type from $\pmb\gamma$, we must have dim $\tau > n-2$. Hence, by \cite{punc} Theorem $3.28$, $\mathscr{M}(X,\pmb\tau)$ has negative virtual dimension. Note that this remains true after replacing $\mathscr{M}(X,\pmb\tau)$ with the closed substack $\mathscr{M}_{\pmb\tau}$. By \cite{punc} Proposition $3.28$, $\mathfrak{M}_\gamma$ is reduced, but not necessarily irreducible. Since $\mathfrak{M}_{\gamma \rightarrow \tau} \rightarrow \mathfrak{M}_\gamma$ is \'etale, $\mathfrak{M}_{\gamma \rightarrow \tau}$ is also reduced of pure dimension, but again not necessarily irreducible. We restrict the morphism $\mathfrak{M}_{\gamma \rightarrow \tau} \rightarrow \mathfrak{M}_{\tau}$ to irreducible components of $\mathfrak{M}_{\gamma \rightarrow \tau}$. Then the domain of this restricted morphism is integral, and all stacks are stratified by quotient stacks. Hence, we can apply \cite{int_mirror} Lemma $A.12$ to show that the virtual pullback of each irreducible component of $\mathfrak{M}_{\gamma \rightarrow \tau}$ contributes $0$ to the virtual fundamental class on $\mathscr{M}(\tilde{X},\pmb\gamma)$. As the virtual fundamental class is a sum over the virtual pullbacks of the irreducible components of $\mathfrak{M}_{\gamma\rightarrow \tau}$, this implies $N_{\pmb\gamma} = 0$, as required. 

\end{proof}

Now given an undecorated wall type $\gamma$ lifting a wall type $\tau$, let $\mathfrak{d}_{\gamma}$ be the wall spanned by the image of the unique leg of $\gamma$ in a cone of $\Sigma(\tilde{X})$, and $x \in int(\mathfrak{d}_{\gamma})$. By taking the image of $x$ under $\Sigma(\tilde{X}) \rightarrow \Sigma(X)$, we also identify a point $x \in int(\mathfrak{d}_{\tau})$. We let $Q_{\tilde{X}}$ be a finitely generated submonoid of $H_2(\tilde{X})$ containing all effective curves classes, $Q_{X} = \pi_*(Q_{\tilde{X}})$, and $I$ be a monomial ideal of $Q_{X}$ such that $\sqrt{I} = Q_{X}\setminus \{0\}$. Since $\pi_*: NE(\tilde{X}) \rightarrow NE(X)$ is surjective, $Q_{X}$ contains all effective curve classes. We also let $J =  \pi_*^{-1}(I) \subset Q_{\tilde{X}}$. In \cite{scatt}, associated to each wall type is a ring $\mathcal{P}_{x,X}$, which contains the monoid ring $\kk[h(\tau_{out})_{\NN}^{gp} \times Q_X]$, and the element:

\[\rho_{\pmb\tau} = exp(k_\tau N_{\pmb\tau} z^{-u_\tau}q^A).\]

In order for this power series expansion to give an element of $\mathcal{P}_{x,X}$, we use the fact that $\sqrt{I} = \mathfrak{m}$ to say that $Q_X\setminus I$ is finite, hence we truncate the sum at all terms divisible by $q^A$ for $A \in I$. We similarly have functions $\rho_{\pmb\gamma} \in \mathcal{P}_{x,\tilde{X}}$ associated with decorated wall types lifting $\pmb\tau$. Note that by the previous lemma, if $\rho_{\pmb\gamma} \not= 1$, and the total curve class of $\pmb\gamma$ is $A'$, then $\pi_*(A') \not= 0 \in NE(X)$. Thus, $kA' \in \pi^{-1}_*(I)$ for some $k > 0$. Thus, by considering the expansion only up to order $k$, we produce an element in $\mathcal{P}_{x,\tilde{X}}$. After identifying $h(\gamma_{out})_\NN^{gp}$ with a subgroup of $h(\tau_{out})_{\NN}^{gp}$, the pushforward homomorphism $Q_{\tilde{X}} \rightarrow Q_X$ induces a pushforward homomorphism $\mathcal{P}_{x,\tilde{X}} \rightarrow \mathcal{P}_{x,X}$ which respects the subrings containing the functions attached to the walls. Applying this homomorphism to all attached functions gives a wall structure on $\Sigma(\tilde{X})$ supporting the same walls as before, but with attached functions given by the pushforward of the previous attached functions. We may further replace this wall structure with an equivalent wall structure in which we replace the walls associated with decorated lifts $\pmb \gamma$ of $\pmb \tau$ with the same undecorated tropical type with a single wall in the usual manner by multiplying the attached wall functions. Finally, we consider the image of this wall structure in $\Sigma(X)$, which we denote by $\pi_*(\widetilde{\mathscr{S}}_{can})$.

We now proceed to the main result of this section:

\begin{corollary}
The wall structures $\pi_*(\widetilde{\mathscr{S}}_{can})$ and $\mathscr{S}_{can}$ on $\mathscr{P}$ are equivalent.
\end{corollary}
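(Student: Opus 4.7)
The plan is to reduce the statement to a wall-by-wall numerical comparison of attached functions in the local rings $\mathcal{P}_{x,X}$. Since equivalence of scattering diagrams is detected by computing the product of attached functions crossing any codimension-one wall, it suffices to verify that for every wall $\mathfrak{d} \subset \mathscr{P}$ and every $x \in int(\mathfrak{d})$, the products
\[\prod_{\pmb\gamma \,:\, \pi(\mathfrak{d}_\gamma) \subseteq \mathfrak{d}} \pi_*(\rho_{\pmb\gamma}) \quad\text{and}\quad \prod_{\pmb\tau \,:\, \mathfrak{d}_\tau = \mathfrak{d}} \rho_{\pmb\tau}\]
agree in $\mathcal{P}_{x,X}$. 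By Lemma \ref{awal}, every nontrivial contribution to $\widetilde{\mathscr{S}}_{can}$ comes from a decorated lift of some decorated wall type on $(X,D)$, so the left-hand product can be reorganized by collecting, for each decorated wall type $\pmb\tau$ on $(X,D)$, all of its decorated lifts $\pmb\gamma$.

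Fix such a decorated wall type $\pmb\tau$ with wall $\mathfrak{d}_\tau$. I would first enumerate its decorated lifts: the subdivision $\Sigma(\tilde X) \to \Sigma(X)$ restricted to the codimension-one cone $h(\tau_{L_{out}})$ decomposes it into top-dimensional pieces, each of which, together with a choice of leg puncturing as in the discussion following Proposition \ref{tcurve}, gives a wall-type lift $\gamma$; the decorations $\pmb\gamma$ lifting $\pmb\tau$ then correspond to distributions of the curve class $A_{\pmb\tau}$ over the vertices of $G_\gamma$ whose $\pi_*$-pushforward returns the decoration of $\pmb\tau$. Since by the definition of a decorated lift $\pi_*$ identifies both contact orders and curve classes, every monomial $z^{-u_\gamma}q^{A_{\pmb\gamma}}$ pushes forward to $z^{-u_\tau}q^{A_{\pmb\tau}}$. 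Taking logarithms therefore reduces the desired wall-by-wall equality to the numerical identity
\[k_\tau \, N_{\pmb\tau} \;=\; \sum_{\pmb\gamma \text{ decorated lift of } \pmb\tau} k_\gamma \, N_{\pmb\gamma}.\]

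To establish this identity I would apply Theorem \ref{mthm2} to the undecorated wall types $\gamma$ and $\tau$, which both have tropical dimension $n-2$, obtaining $s_*[\mathscr{M}(\tilde X,\gamma)]^{vir} = \frac{1}{m_\gamma}[\mathscr{M}(X,\tau)]^{vir}$ with $m_\gamma = |coker(\gamma^{gp} \to \tau^{gp})|$. Decomposing both sides by decorations via Theorem \ref{decthm}, which pins down precisely which connected components of $\mathscr{M}(X,\pmb\tau)$ receive contributions from each decorated lift, and taking degrees expresses $N_{\pmb\tau}$ in terms of the $N_{\pmb\gamma}$ and the $m_\gamma$. It then remains to verify that the lattice factors $k_\gamma / m_\gamma$ sum to $k_\tau$ as $\gamma$ ranges over lifts, which is a computation in the commutative square
\[\begin{tikzcd}
\Lambda_{\gamma_{L_{out}}} \arrow{r} \arrow{d} & \Lambda_{\pmb\sigma(L_{out}^{\tilde X})} \arrow{d} \\
\Lambda_{\tau_{L_{out}}} \arrow{r} & \Lambda_{\pmb\sigma(L_{out}^X)}
\end{tikzcd}\]
of leg lattices and target cone lattices, using multiplicativity of indices of sublattices together with the fact that the cones $\gamma_{L_{out}}$ subdivide $\tau_{L_{out}}$. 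The main obstacle will be this final bookkeeping step: ensuring that the lattice and cokernel factors interact cleanly, and that the choices of leg puncturing together with the lattice coarsenings introduced in Proposition \ref{tcurve} are correctly compensated for, so that the two diagrams are equivalent in the sense of \cite{scatt}.
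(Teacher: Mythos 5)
Your overall strategy — invoke Lemma \ref{awal} to identify supports, take logarithms to reduce to a numerical identity, and close it with Theorem \ref{mthm2} together with the decorated Theorem \ref{decthm} and a lattice computation — is the paper's approach. But there is a genuine gap in the way you set up the numerical identity, and that gap is precisely the ``main obstacle'' you flag at the end.

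Your identity
\[k_\tau\,N_{\pmb\tau} \;=\; \sum_{\pmb\gamma \text{ decorated lift of } \pmb\tau} k_\gamma\,N_{\pmb\gamma}\]
sums over \emph{all} decorated lifts $\pmb\gamma$, letting the underlying undecorated wall type $\gamma$ vary, and you then propose to finish by checking $\sum_\gamma k_\gamma/m_\gamma = k_\tau$. This overcounts. The point is that for any \emph{single fixed} undecorated lift $\gamma$, the stabilization $\mathscr{M}(\tilde{X},\gamma) \to \mathscr{M}(X,\tau)$ is already surjective, and by Theorem \ref{decthm} the images of $\mathscr{M}(\tilde{X},\pmb\gamma)$, as $\pmb\gamma$ ranges only over decorations with that fixed underlying $\gamma$, are disjoint unions of components covering all of $\mathscr{M}(X,\pmb\tau)$. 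So the decorated pushforward from Theorem \ref{mthm2} gives, for each fixed $\gamma$ separately,
\[\sum_{\pmb\gamma \,:\, \text{underlying type } \gamma} N_{\pmb\gamma} \;=\; \frac{1}{m_\gamma}\,N_{\pmb\tau},\]
not a single identity spread across all $\gamma$'s. Summing over distinct $\gamma$'s would multiply the right-hand side by the number of lifts. Correspondingly, the wall-function comparison must be performed at a \emph{generic point} $x$ of $\mathfrak{d}_\tau$, where only one $\gamma$ has $x \in \mathrm{int}(\mathfrak{d}_\gamma)$; your condition ``$\pi(\mathfrak{d}_\gamma) \subseteq \mathfrak{d}$'' is too weak to isolate the contributing walls.

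Once the identity is formulated for fixed $\gamma$, the ``final bookkeeping'' becomes the single equality $k_\gamma/m_\gamma = k_\tau$ (equivalently $m_\gamma = k_\gamma/k_\tau$), not a sum. This is exactly what the paper verifies, using the observation you also allude to: the tropical moduli spaces of the wall types $\gamma$ and $\tau$ are determined by the image of the unique leg, so the cokernel of $\gamma^{gp} \to \tau^{gp}$ can be read off from the finite-index inclusion $\Lambda_{\gamma_{L_{out}}} \hookrightarrow \Lambda_{\tau_{L_{out}}}$ inside the common $(n-1)$-dimensional lattice spanned by the wall, giving $m_\gamma = k_\gamma/k_\tau$ directly. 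Your commutative square of leg and target lattices is the right object; you just need to recognize that the right vertical map is (up to the coarsening in Proposition \ref{tcurve}) the identity on the ambient lattice, so the index comparison is a one-line multiplicativity-of-index argument with no residual summation.
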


\begin{proof}
Since all contributing wall types of $\pi_*(\widetilde{\mathscr{S}}_{can})$ are lifts of wall types potentially contributing to $\mathscr{S}_{can}$ by Lemma \ref{awal}, we at least have equality of supports of the wall structures. To show the wall structure are equivalent, it suffice to show that for $\gamma$ a wall type lifting a wall type $\tau$, and $\pmb\tau$ a decoration of $\tau$, we have the equality $\rho_{\pmb\tau} = \prod_{\pmb \gamma} \rho_{\pmb\gamma}$ of wall functions, with the product running over all decorations of $\gamma$ which lift the decoration $\pmb \tau$ of $\tau$. Put more explicitly, we wish to show:

\[exp(k_\tau N_{\pmb\tau}z^{-u_\tau}q^A) =exp(k_{\gamma}z^{-u_\tau}q^A \sum_{\pmb\gamma} N_{\pmb\gamma}) \]

It suffices to show that $k_\tau N_{\pmb\tau} =  k_{\gamma}\sum_{\pmb\gamma} N_{\pmb\gamma}$, which we use Theorem \ref{mthm2} to prove. By Lemma \ref{decthm}, the image of the stabilization morphisms $\mathscr{M}(\tilde{X},\pmb{\gamma})\rightarrow \mathscr{M}(X,\pmb\tau)$ are unions of disjoint components and cover $\mathscr{M}(X,\pmb\tau)$. Combining this observation with the decorated version of Theorem \ref{mthm2}, we have $\sum_{\pmb \gamma} N_{\pmb\gamma} = \frac{1}{m} N_{\pmb\tau}$, with $m$ the lattice index of the inclusion $\gamma \rightarrow \tau$. As observed previously, the tropical modulus of a curve of types $\gamma$ or $\tau$ is determined by the image of the unique leg. The index may therefore be calculated to be $m = \frac{k_\gamma}{k_\tau}$. Hence $\sum_{\pmb\gamma} N_{\pmb\gamma} = \frac{k_{\tau}}{k_\gamma} N_{\pmb\tau}$, and multiplying both sides of the equality by $k_{\gamma}$ gives the desired equality. 
\end{proof}

\section{Birational invariance of intrinsic mirrors}

As a corollary of the birational invariance of the canonical wall structure derived in the previous section, we have also proven Corollary \ref{mcr1}  for a log \'etale modification $\tilde{X} \rightarrow X$, with $X$ satisfying either Assumptions $1.1$ or $1.2$ of \cite{scatt}. In \cite{int_mirror}, this mirror construction is generalized to arbitrary log Calabi-Yau varieties, without either Assumption $1.1$ or $1.2$ of \cite{scatt}. In the following, we show that this result can be generalized to hold for all examples of pairs of log smooth pairs $(X,D)$ considered in the less constrained setting of \cite{int_mirror}. Along the way, we prove certain pointed versions of Theorem \ref{mthm1}.

We first recall the relevant moduli spaces of punctured log maps used to construct the mirror families from \cite{int_mirror}. In \cite{int_mirror} Section $3.1$, Gross and Siebert define evaluation spaces $\mathscr{P}(X,r)$, for $r \in \Sigma(X)(\mathbb{Z})$. To describe these log stacks, first let $Z_r \subset X$ be the closure of the stratum $Z^\circ_r$ of $X$ associated to the smallest cone $\sigma_r \in \Sigma(X)$ containing $r$. The underlying stack of $\mathscr{P}(X,r)$ is defined to be $Z_r \times B\mathbb{G}_m$. This stack has a natural structure of a log stack $\widetilde{\mathscr{P}(X,r)}$, by giving $B\mathbb{G}_m \subset \mathcal{A}_{\NN}$ its pullback log structure, and viewing $\widetilde{\mathscr{P}(X,r)}$ as the product of log stacks. The log structure on $\mathscr{P}(X,r)$ is defined to be the sub log structure $\mathscr{M}_{\mathscr{P}(X,r)} \subset \mathscr{M}_{\widetilde{\mathscr{P}(X,r)}}$ with ghost sheaf given by:

\[\Gamma(U,\overline{\mathcal{M}}_{\mathscr{P}(X,r)}) = \{(m,r(m)) \text{ }|\text{ }m \in \Gamma(U,\overline{\mathcal{M}}_Z)\} \subset \Gamma(U,\overline{\mathcal{M}}_Z) \oplus \mathbb{N} = \Gamma(U,\overline{\mathcal{M}}_{\widetilde{\mathscr{P}(X,r)}})\]

For a contact order $r \in Sk(\tilde{X})(\mathbb{Z})$, we may similarly define $\widetilde{\mathscr{P}(\tilde{X},r)}$, $\mathscr{P}(\tilde{X},r)$ and $\tilde{Z}_r$. Using the fact the log \'etale modification restricts to $\pi: \tilde{Z}_r \rightarrow Z_r$, we have a natural morphism $\widetilde{\mathscr{P}(\tilde{X},r)} \rightarrow \widetilde{\mathscr{P}(X,r)}$. One can check that the appropriate sub log structures are respected by this morphism, hence we have a morphism $\mathscr{P}(\tilde{X},r) \rightarrow \mathscr{P}(X,r)$. Thus, given a morphism $B\mathbb{G}_m \rightarrow \mathscr{P}(\tilde{X},r)$, we also produce a morphism $B\mathbb{G}_m \rightarrow \mathscr{P}(X,r)$. By \cite{int_mirror} Proposition $3.8$, after a choice of $z \in \tilde{Z}_r^\circ$, we produce a morphism $B\mathbb{G}_m \rightarrow \mathscr{P}(\tilde{X},r)$. Since the modification restricts to a morphism $\tilde{Z}_r^\circ \rightarrow Z_r^\circ$, the resulting morphism $B\mathbb{G}_m \rightarrow Z_r$ has image contained in $Z_r^\circ$. From now on, we let $z' \in \tilde{Z}^\circ$ be a geometric point, and $z = \pi(z') \in Z^\circ$. 

Suppose now we have two contact order $p,q \in B(\tilde{X})(\mathbb{Z})$ which we call input contact orders, and another contact order $r \in B(\tilde{X})(\mathbb{Z})$ which we call an output contact order. After picking a choice of $A' \in NE(\tilde{X})$, we may define a decorated tropical type of punctured log curve $\beta'$ consisting of a single vertex $v$ with $\pmb\sigma(v) = 0$ decorated by $B$, and three legs $l_p,l_q$ and $l_r$ with associated contact orders $p,q$ and $-r$ respectively. By taking the image of the above data under $\pi_*$, we get a corresponding decorated tropical type $\beta$ on $X$, and a moduli space. Stabilization induces a morphism $\mathscr{M}(\tilde{X},\beta') \rightarrow \mathscr{M}(X,\beta)$. Each of these stacks are stratified by stacks associated to decorated and realizable tropical types. Fixing $\gamma$ a realizable tropical type marked by $\beta$, \cite{punc} Section $4$ describes obstruction theories related to the morphisms $\mathscr{M}(\tilde{X}, \gamma) \rightarrow \mathfrak{M}(\tilde{\mathcal{X}},\gamma)$ and $\mathscr{M}(\tilde{X},\gamma) \rightarrow \mathfrak{M}^{ev(x_{out})}(\tilde{\mathcal{X}},\gamma)= \mathfrak{M}^{ev}(\tilde{\mathcal{X}},\gamma) := \mathfrak{M}(\tilde{\mathcal{X}},\gamma) \times_{\tilde{\mathcal{X}}} \tilde{X}$. Since $\tilde{X} \rightarrow \tilde{\mathcal{X}}$ is smooth, the projection $\mathfrak{M}^{ev}(\tilde{\mathcal{X}},\gamma) \rightarrow \mathfrak{M}(\tilde{\mathcal{X}},\gamma)$ is smooth, hence equipped with a canonical perfect obstruction theory given by the relative cotangent complex, in which virtual pullback in the sense of \cite{vpull} is given by flat pullback. By construction of the obstruction theories in \cite{punc}, this triple yields a compatible triple. 

By the remarks following Proposition \ref{stab}, there is a unique decorated realizable tropical type $\pmb\tau$ marked by $\beta$ such that $\mathscr{M}(\tilde{X},\beta') \rightarrow \mathscr{M}(X,\beta)$ restricts to $\mathscr{M}(\tilde{X},\pmb\gamma) \rightarrow \mathscr{M}(X,\pmb\tau)$. The discussion above applies after replacing $\tilde{X}$ and $\pmb\gamma$ with $X$ and $\pmb\tau$. Using $z'\in \tilde{Z}^\circ$, we may form the point constrained moduli space:

\[
\begin{tikzcd}
\mathscr{M}(\tilde{X},\gamma,z') := \mathscr{M}(\tilde{X},\gamma) \times_{\mathscr{P}(\tilde{X},r)} B\mathbb{G}_m  &    \mathfrak{M}^{ev}(\tilde{\mathcal{X}},\gamma,z') = \mathfrak{M}^{ev}(\tilde{\mathcal{X}},\gamma) \times_{\mathscr{P}(\tilde{X},r)} B\mathbb{G}_m
\end{tikzcd}
\]

Using $z \in Z^\circ$, we may compose $B\mathbb{G}_m \rightarrow \mathscr{P}(\tilde{X},r) \rightarrow \mathscr{P}(X,r)$, and similarly define $\mathscr{M}(X,\tau,z)$ and $\mathfrak{M}^{ev}(\mathcal{X},\tau,z)$.

In order to compare the product of theta functions in $R_{(X,D)}$ and $R_{(\tilde{X},\tilde{D})}$, we first identify the contact orders in $\widetilde{\mathscr{P}} \subset Sk(\tilde{X},\mathbb{Z})$ with their image contact orders in $\mathscr{P} \subset Sk(X,\mathbb{Z})$. Then we may identify the basis of theta functions of $R_{(X,D)}$ with the basis of theta functions of $R_{(\tilde{X},\tilde{D)}}$, and write the product of theta functions in the two associated mirrors:

\begin{align}
\vartheta_p\vartheta_q  &= \sum_{A \in NE(X),r \in Sk(X,\mathbb{Z})} N_{p,q,r}^A \vartheta_rz^A \\
\vartheta_p\vartheta_q &= \sum_{A' \in NE(\tilde{X}), r\in Sk(\tilde{X},\mathbb{Z})} N_{p,q,r}^{A'} \vartheta_rz^{\pi_*(A')} 
\end{align}

In the sums on the righthand sides of the two equalities, the coefficients are defined as:

\begin{align}
N_{p,q,r}^A &= deg [\mathscr{M}(X,\beta,z)]^{vir}\\
N_{p,q,r}^{A'} &= deg [\mathscr{M}(\tilde{X},\beta',z')]^{vir}
\end{align}

To show the resulting rational combinations of theta functions are equal, it suffices to show for every term $N_{p,q,r}^{A}$ appearing as the coefficient in front of a $\vartheta_rz^A$ term in the first line, we have:

\[\sum_{A' \text{, }\pi_*(A') = A} N_{p,q,r}^{A'} = N_{p,q,r}^{A}.\] 

As a first step in deducing this equality using the results from this paper, we decompose the relevant moduli stacks. Although we focus on decomposing $\mathfrak{M}^{ev}(\mathcal{X},\beta,z)$ in what follows, an analogous description holds for $\mathfrak{M}^{ev}(\tilde{\mathcal{X}},\beta',z)$. By construction of the moduli stacks, there exists projection morphisms $ev: \mathfrak{M}^{ev}(\mathcal{X},\beta,z) \rightarrow B\mathbb{G}_m^\dagger$. Since the log structure on $B\mathbb{G}_m^\dagger$ has tropicalization $\mathbb{R}_{\ge 0}$, the projection is integral. Thus, the log and standard fiber dimensions are equal. By taking $\xi$ a generic point of an irreducible component of $\mathfrak{M}^{ev}(\mathcal{X},\beta,z)$, and $Q_\xi$ the stalk of the ghost sheaf at this point, we have $dim$ $Q_\xi^{gp} = $ $dim$ $\mathbb{N}^{gp} = 1$. Thus, we must have $Q_\xi = \mathbb{N}$.

To determine the multiplicity of the component of $\mathfrak{M}_\xi$, we follow the argument given in \cite{scatt} Section $6$ step $3$. First, the morphism $\mathfrak{M}^{ev}(\mathcal{X},\beta,z) \rightarrow B\mathbb{G}_m^\dagger$ is log smooth. Recalling that $B\mathbb{G}_m^\dagger$ has a natural idealized log structure given by $\mathbb{N}\setminus 0 \subset \mathbb{N}$, we may pullback this idealized log structure to $\mathfrak{M}^{ev}(\mathcal{X},\beta,z)$. After equipping these log stacks with this idealized log structure, by \cite{Og} IV.$3.1.22$, we have the resulting morphism of log stacks is idealized log smooth. Hence, by \cite{punc} Proposition B$4$, we have $\mathfrak{M}^{ev}(\mathcal{X},\beta,z)$ is, smooth locally around $\xi$, smooth over $B\mathbb{G}_m^\dagger \times_{B\mathbb{G}_m^{\dagger}} \mathcal{A}_{Q_\xi,K} = \mathcal{A}_{Q_\xi,K}$, where $K$ is the ideal generated by the image of $\mathbb{N}\setminus 0$ in $Q_\xi$ under the morphism of monoids $ev^*:\NN \rightarrow Q_\xi$. Thus, $\mu_\xi$ can be calculated as the multiplicity of $\mathcal{A}_{Q_{\xi},K}$, or equivalently $dim_\kk\text{ }\kk[\NN]/K$, which a straightforward calculation shows is $coker(ev^*:\mathbb{N}^{gp} \rightarrow Q_\xi^{gp} = \mathbb{Z})$. In particular, the multiplicity $m_\xi$ depends only on the tropical type of $\xi$ and the tropicalization of the evaluation map, so we may write $m_\xi = m_\tau$.

Now write $\mathfrak{M}^{ev}(\mathcal{X},\tau,z) := \mathfrak{M}^{ev}(\mathcal{X},\tau)\times_{\mathscr{P}(X,r)} B\mathbb{G}_m$. Since $\tau$ is marked by the tropical type $\beta$, there exists a finite morphism $\mathfrak{M}^{ev}(\mathcal{X},\tau,z) \rightarrow \mathfrak{M}^{ev}(\mathcal{X},\beta,z)$ of degree $|Aut\text{ }{\tau}|$. Using this moduli spaces of punctured log maps marked by tropical types $\tau$, we have the following commutative diagram:

\[\begin{tikzcd}
  \coprod_{\pmb\tau = (\tau,A)} \mathscr{M}(X,\pmb\tau,z)  \arrow{r} \arrow[d,"\epsilon_{\tau,z}"] & \mathscr{M}(X,\beta,z) \arrow[d,"\epsilon_z"] \\
 \coprod_\tau \mathfrak{M}^{ev}(\mathcal{X},\tau,z) \arrow[r,"i"] & \mathfrak{M}^{ev}(\mathcal{X},\beta,z)
\end{tikzcd}\]

Since the multiplicity of a component of $\mathfrak{M}^{ev}(\mathcal{X},\beta,z)$ depended only on the tropical type of the generic point, we have the decomposition:

\begin{equation}\label{eq1}
[\mathfrak{M}^{ev}(\mathcal{X},\beta,z)]= \sum_\tau \frac{1}{|Aut(\tau)|}i_*[\mathfrak{M}^{ev}(\mathcal{X},\tau,z)]
\end{equation}

After identifying the Chow theories of $\mathfrak{M}^{ev}(\mathcal{X},\tau,z)$ and $\mathfrak{M}^{ev}(\mathcal{X},\tau,z)_{red}$ and denoting by $\epsilon_{\pmb\tau,z}$ for the forgetful morphism $\epsilon_{\pmb\tau,z}: \mathscr{M}(X,\pmb\tau,z) \rightarrow \mathfrak{M}^{ev}(\mathcal{X},\tau,z)$, we let 
\[N_{\pmb\tau} := \frac{1}{|Aut(\tau)|}deg[\mathscr{M}(X,\pmb\tau,z)]^{vir} = \frac{m_\tau}{|Aut(\tau)|}deg(\epsilon^!_{\pmb\tau,z}([\mathfrak{M}^{ev}(\mathcal{X},\tau,z)_{red}])).\]
 
Letting $\pmb\tau \vdash (\tau,\textbf{A})$ mean the decorated tropical type $\pmb\tau$ has total curve class $\textbf{A}$, we have the following decomposition of $N_{p,q,r}^A$:

\[N_{p,q,r}^A = \sum_{\pmb\tau \vdash (\tau,A)} N_{\pmb \tau}.\]

By analogous arguments for $\tilde{X}$ and the decorated tropical type $\beta'$, after setting
\[N_{\pmb\gamma} := \frac{1}{|Aut(\gamma)|}deg[\mathscr{M}(X,\pmb\gamma,z')]^{vir} = \frac{m_\gamma}{|Aut(\gamma)|}deg(\epsilon_{\pmb\gamma,z'}^!([\mathfrak{M}^{ev}(\tilde{\mathcal{X}},\gamma,z')_{red}])),\] we have:

\[N_{p,q,r}^{A'} = \sum_{\pmb \gamma \vdash (\gamma,A')} N_{\pmb \gamma}.\]

To compare these structure constants, note that given a choice of decorated tropical type $\pmb\gamma$ contributing to $N_{p,q,r}^{A'}$, there is a unique decorated tropical type $\pmb\tau$, which is easily seen to correspond to a contributing tropical type to $N_{p,q,r}^A$. On the other hand, given a contributing tropical type to $N_{p,q,r}^A$, since each generic point of $\mathfrak{M}^{ev}(\mathcal{X},\beta,z)$ has monoid $\NN$, the tropicalization of the universal tropical map associated to this point is associated with a morphism of cones $\mathbb{R}_{\ge 0} \rightarrow \tau$. In particular, this morphism factors uniquely through a cone $\gamma \subset \tau$ associated with a subdivision of $\tau$. Hence, there is a unique maximally extended tropical lift $\gamma$ of $\tau$ which satisfies the tropical point constraint. 

With this relationship between contributing tropical types to $N_{p,q,r}^{A'}$ and $N_{p,q,r}^{\pi_*(A')}$ in mind, the key result of this section is the following:

\begin{proposition}\label{strcomp}
For each decorated type $\pmb \tau$ contributing to $N_{p,q,r}^A$, we have
\[ N_{\pmb\tau} = \sum_{\pmb\gamma \rightarrow \pmb\tau}N_{\pmb\gamma}\]
where the sum ranges over all decorated lifts of $\pmb\tau$ with underlying tropical type $\gamma$.

\end{proposition}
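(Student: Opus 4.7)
The plan is to derive the identity from a pointed, decorated analogue of Theorem \ref{mthm2} via virtual pullback. First, for each decorated lift $\pmb\gamma$ of $\pmb\tau$, I would base-change the cartesian square of Theorem \ref{decthm} along the evaluation maps to $\mathscr{P}(\tilde X,r)$ and $\mathscr{P}(X,r)$, using $\pi(z')=z$ together with the natural morphism $\mathscr{P}(\tilde X,r)\to\mathscr{P}(X,r)$ already established. This produces a cartesian square of pointed moduli spaces
\[\begin{tikzcd}
\mathscr{M}(\tilde X,\pmb\gamma,z') \arrow[r]\arrow[d] & \mathfrak{M}_{\gamma\to\tau,z}\arrow[d]\\
\mathscr{M}_{\pmb\tau,z}\arrow[r] & \mathfrak{M}^{ev}(\mathcal X,\tau,z)
\end{tikzcd}\]
in which the right vertical morphism inherits properness, idealized log \'etaleness, and DM type from the unpointed setting by base change. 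Since $X\to\mathcal X$ is strict and smooth, the argument of Section 5 comparing obstruction theories pulls back through the base change and remains valid in the pointed setting.

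Next, I would compute the degree of $\mathfrak{M}_{\gamma\to\tau,z}\to\mathfrak{M}^{ev}(\mathcal X,\tau,z)$ at a generic point $\xi$. By the analysis preceding the proposition, $\mathfrak{M}^{ev}(\mathcal X,\tau,z)$ has generic basic monoid $\NN$ and multiplicity $m_\tau$, arising from the torsion in the cokernel of $ev^\ast:\NN^{gp}\to Q_\xi^{gp}$; the analogous computation applies on the $\gamma$-side with multiplicity $m_\gamma$. The fiber of $\mathfrak{M}_{\gamma\to\tau,z}\to\mathfrak{M}^{ev}(\mathcal X,\tau,z)$ over $\xi$ is the base change of the idealized Artin cone map $\mathcal A_{\gamma,\gamma}\to \mathcal A_{\tau,\tau}$ along the $B\mathbb G_m$-chart corresponding to the evaluation constraint. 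Applying Lemma \ref{DM} together with a cokernel computation in the pointed setting, I expect its degree to evaluate to the combinatorial factor $|Aut(\tau)|\,m_\tau/(|Aut(\gamma)|\,m_\gamma)$.

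Combining the previous two steps via the virtual pullback formalism of \cite{vpull} together with the proper pushforward for DM-type morphisms (\cite{pushforward} Proposition B.18), and using that by Theorem \ref{decthm} the union of $\mathscr{M}(\tilde X,\pmb\gamma,z')$ over decorated lifts $\pmb\gamma$ covers $\mathscr{M}_{\pmb\tau,z}$ (a union of connected components of $\mathscr{M}(X,\pmb\tau,z)$), I would obtain
\[\sum_{\pmb\gamma\to\pmb\tau} s_*[\mathscr{M}(\tilde X,\pmb\gamma,z')]^{vir} = \frac{|Aut(\tau)|\,m_\tau}{|Aut(\gamma)|\,m_\gamma}\,[\mathscr{M}(X,\pmb\tau,z)]^{vir}.\]
Taking degrees, dividing through by $|Aut(\tau)|\,m_\tau$, and comparing against the definitions of $N_{\pmb\tau}$ and $N_{\pmb\gamma}$ recorded just before the proposition yields the claimed identity $\sum_{\pmb\gamma\to\pmb\tau}N_{\pmb\gamma}=N_{\pmb\tau}$.

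The main obstacle is the combinatorial degree calculation in the second step: the unpointed Theorem \ref{mthm2} produces a clean lattice index $1/m$ by Lemma \ref{DM}, but in the pointed setting the evaluation constraint interacts nontrivially with the map $\gamma^{gp}\to\tau^{gp}$. The computation must carefully assemble the multiplicities $m_\tau, m_\gamma$ (measuring torsion of the tropical evaluation at $z$ and $z'$) together with the automorphism factors $|Aut(\tau)|, |Aut(\gamma)|$ to produce the exact combinatorial factor needed; once this is verified the remainder is a routine application of the machinery already developed in the paper.
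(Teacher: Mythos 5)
Your high-level strategy --- forming a pointed cartesian square, transferring the obstruction theory, computing a degree, and pushing forward virtual classes --- is the same one the paper follows. However, there are two substantive problems.

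First, your claimed degree $|Aut(\tau)|\,m_\tau/(|Aut(\gamma)|\,m_\gamma)$ for the morphism $\mathfrak{M}^{ev}_{\gamma\to\tau,z'}\to\mathfrak{M}^{ev}(\mathcal{X},\tau,z)$ is wrong: the paper computes this degree to be $m_\tau/m_\gamma$, with no automorphism factor. The automorphism counts do not enter via this morphism at all --- they enter through the finite degree-$|Aut(\tau)|$ covers $\mathfrak{M}^{ev}(\mathcal{X},\tau,z)\to\mathfrak{M}^{ev}(\mathcal{X},\beta,z)$ (and similarly on the $\gamma$-side), which is why they appear in the very definitions of $N_{\pmb\tau}$ and $N_{\pmb\gamma}$. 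If you plug your guessed degree into your own last display and divide by $|Aut(\tau)|\,m_\tau$ as you propose, you do not obtain $\sum N_{\pmb\gamma}=N_{\pmb\tau}$; you get a spurious factor of $|Aut(\tau)|^2 m_\tau / (|Aut(\gamma)|^2 m_\gamma)$. The correct computation, via the fs cartesian square reducing $\mathfrak{M}^{ev}_{\gamma\to\tau,z'}\to\mathfrak{M}^{ev}_{\tau,z}$ to a base change of $\mathfrak{M}_{\gamma\to\tau}\to\mathfrak{M}_\tau$ and then evaluating at a generic point using $\mathcal{A}_\NN\times^{fs}_{\mathcal{A}_\tau}\mathcal{A}_\gamma\cong\mathcal{A}_\NN$, yields $m_\tau/m_\gamma$ via Lemma \ref{DM}.

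Second, you gloss over the real technical obstacle of the pointed setting, which is that the point constraints $z$ and $z'$ must be shown compatible under the non-obvious map $\mathscr{P}(\tilde X,r)\to\mathscr{P}(X,r)$. The paper proves the key lemma that the square with horizontal arrows $B\mathbb{G}_m^\dagger\to\mathscr{P}(\tilde X,r)$ and $B\mathbb{G}_m^\dagger\to\mathscr{P}(X,r)$ (identity on the left) is fs cartesian; this requires unwinding the sub log structures on $\mathscr{P}(X,r)\subset\widetilde{\mathscr{P}(X,r)}$ and the fs cartesian square $(z',\NN)\to(z,\NN)$ over $\tilde X\to X$. Without this lemma, neither your claimed cartesian square nor the later reduction to Lemma \ref{DM} is justified. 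Relatedly, the evaluation map used to define $\mathfrak{M}^{ev}_{\gamma\to\tau}$ must go through the $\gamma$-side composite $\mathfrak{M}_{\gamma\to\tau}\to\mathfrak{M}_\gamma\to\mathscr{P}(\tilde{\mathcal{X}},r)$, which your description does not make explicit. Finally, the obstruction-theory comparison needs one additional observation beyond the unpointed case: stabilization does not contract the component containing $x_{out}$ (since the punctured leg of $\gamma$ is not subdivided over its vertex), so $\mathcal{O}(-x_{out})$ pulls back properly; it is not automatic that the Section 5 argument ``pulls back through the base change.''
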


\begin{proof}
Observe that stabilization induces a morphism $\mathscr{M}(\tilde{X},\pmb\gamma,z') \rightarrow \mathscr{M}(X,\beta,z)$. Since $\pmb\gamma$ is decorated, there exists a unique decorated type $\pmb\tau$ such that the stabilization morphism factors through the proper map $\mathscr{M}(X,\pmb\tau,z) \rightarrow \mathscr{M}(X,\beta,z)$. In order to use our main theorem, we define the stacks $\mathfrak{M}^{ev}_{\gamma\rightarrow \tau} = \mathfrak{M}_{\gamma \rightarrow \tau}\times_{\mathscr{P}(\tilde{\mathcal{X}},r)} \mathscr{P}(\tilde{X},r)$ and $\mathfrak{M}^{ev}_{\gamma\rightarrow \tau,z'} = \mathfrak{M}^{ev}_{\gamma \rightarrow \tau} \times_{\mathscr{P}(\tilde{X},r)}^{fs} B\mathbb{G}_m$, where $\mathfrak{M}_{\gamma \rightarrow \tau} \rightarrow \mathscr{P}(\tilde{\mathcal{X}},r)$ is defined by precomposing the \'etale map $\mathfrak{M}_{\gamma \rightarrow \tau} \rightarrow \mathfrak{M}_\gamma$ with the evaluation morphism $\mathfrak{M}_\gamma \rightarrow \mathscr{P}(\tilde{\mathcal{X}},r)$. We may pullback the obstruction theory for the morphism $\mathscr{M}(\tilde{X},\pmb\gamma) \rightarrow \mathfrak{M}^{ev}_{\gamma\rightarrow \tau}$ to give an obstruction theory to $\mathscr{M}(\tilde{X},\pmb\gamma,z') \rightarrow \mathfrak{M}^{ev}_{\gamma\rightarrow\tau,z'}$. Chasing cartesian diagrams given by definition shows that $\mathfrak{M}^{ev}_{\gamma \rightarrow \tau,z'}$ is \'etale over $\mathfrak{M}^{ev}_{\gamma,z'}$ and the virtual fundamental class associated with the above defined obstruction theory coincides with the one defined in \cite{int_mirror} Proposition $3.12$. After writing $\mathscr{M}_{\pmb\tau,z}$ for the closed substack given by the image of $\mathscr{M}(\tilde{X},\pmb\gamma,z') \rightarrow \mathscr{M}(X,\pmb\tau,z)$, we show the following lemma:

\begin{lemma}
The following commutative diagram is cartesian in all categories, and the perfect obstruction theories associated with the horizontal maps are compatible

\begin{equation}\label{di1}
\begin{tikzcd}
\mathscr{M}(\tilde{X},\pmb\gamma,z') \arrow{r} \arrow[d,"\pi"] & \mathfrak{M}^{ev}_{\gamma\rightarrow \tau,z'} \arrow[d,"p"] \\
\mathscr{M}_{\pmb\tau,z} \arrow{r} & \mathfrak{M}^{ev}(\mathcal{X},\tau,z)
\end{tikzcd}
\end{equation}

\end{lemma}

\begin{proof}
 Note that since the bottom arrow is strict, it suffices to show the diagram is cartesian in the category of fs log stacks. With this end in mind, suppose $S$ is a fine and saturated scheme, and we have morphisms $S \rightarrow \mathscr{M}_{\pmb\tau,z}$ and $S \rightarrow \mathfrak{M}^{ev}_{\gamma \rightarrow \tau,z'}$ such that the two composites $S \rightarrow \mathfrak{M}(\mathcal{X},\tau,z)$ are isomorphic. Note that by definition, $\mathscr{M}(\tilde{X},\pmb\gamma,z')$ is constructed as the following fs pullback:

\[\begin{tikzcd}
\mathscr{M}(\tilde{X},\pmb\gamma,z') \arrow{r} \arrow{d} & B\mathbb{G}_m \arrow{d} \\
\mathscr{M}(\tilde{X},\pmb\gamma) \arrow{r} & \mathscr{P}(\tilde{X},r)
\end{tikzcd}\]

The morphism $S \rightarrow \mathfrak{M}^{ev}_{\gamma \rightarrow \tau,z'}$ provides a morphism $S \rightarrow B\mathbb{G}_m$ after composing with the projection $\mathfrak{M}^{ev}_{\gamma \rightarrow \tau,z'} \rightarrow B\mathbb{G}_m$. To construct a morphism $S \rightarrow \mathscr{M}(\tilde{X},\pmb\gamma)$, note that by composing $S \rightarrow \mathfrak{M}^{ev}_{\gamma \rightarrow \tau,z'}$ with the projection $\mathfrak{M}^{ev}_{\gamma \rightarrow \tau,z'} \rightarrow \mathfrak{M}_{\gamma \rightarrow \tau}$, we have a morphism $S \rightarrow \mathfrak{M}_{\gamma \rightarrow \tau}$. Since we have a morphism $S \rightarrow \mathscr{M}(X,\pmb\tau)$ as well, by the commutativity of diagram \ref{di1} and Theorem \ref{mthm1}, we have a uniquely defined morphism $S \rightarrow \mathscr{M}(\tilde{X},\gamma)$. By Theorem \ref{decthm}, we in fact have that $S$ factors through $\mathscr{M}(\tilde{X},\pmb\gamma)$. 

To see that the two induced morphisms $S \rightarrow \mathscr{P}(\tilde{X},r)$ are isomorphic, we prove the following diagram is fs cartesian:

\begin{lemma}
The following diagram is cartesian in the category of fine and saturated log stacks:
\begin{equation}\label{di2}
\begin{tikzcd}
B\mathbb{G}_m^\dagger \arrow[r,"i"] \arrow[d,"id"] &  \mathscr{P}(\tilde{X},r) \arrow{d}\\
B\mathbb{G}_m^\dagger \arrow{r} & \mathscr{P}(X,r) 
\end{tikzcd}
\end{equation}

\end{lemma}

\begin{proof}

First, observe the following diagram of log schemes is fs cartesian:

\begin{equation}\label{di2.1}
\begin{tikzcd}
(z',Spec\text{ }\NN) \arrow{r} \arrow{d} &  \tilde{X} \arrow{d} \\
(z,Spec\text{ }\NN) \arrow{r} & X
\end{tikzcd}
\end{equation}
Indeed, as log maps, the morphism $(z,Spec\text{ }\NN) \rightarrow \mathcal{A}_X$ factors uniquely through the subdivision $\widetilde{\mathcal{A}}_X \rightarrow \mathcal{A}_X$, hence the diagram is fs cartesian after replacing $X$ and $\tilde{X}$ with $\mathcal{A}_X$ and $\widetilde{\mathcal{A}}_X$, and since $\tilde{X} \rightarrow X$ is pulled back from the subdivision $\widetilde{\mathcal{A}}_X \rightarrow \mathcal{A}_X$, standard arguments with cartesian squares show the commutative square above is fs cartesian. 

Using the fs cartesian diagram of (\ref{di2.1}), we have the following commuting diagram of fs log stacks, with the left most square fs cartesian:

\begin{equation}\label{di2.2}
\begin{tikzcd}
(z,\NN)\times B\mathbb{G}_m^\dagger \arrow{r} \arrow{d} & \widetilde{\mathscr{P}(\tilde{X},r)} \arrow{r} \arrow{d} & \mathscr{P}(\tilde{X},r)\arrow{d} \\
(z,\NN)\times B\mathbb{G}_m^\dagger \arrow{r} & \widetilde{\mathscr{P}(X,r)} \arrow{r} & \mathscr{P}(X,r)
\end{tikzcd}
\end{equation}
Note that both of the morphisms $(z,\NN) \times B\mathbb{G}_m^\dagger \rightarrow \mathscr{P}(\tilde{X},r),\mathscr{P}(X,r)$ factor through fs log stacks with underlying stacks $B\mathbb{G}_m$, but with log structures given by the sub log structure of $(z,\NN) \times B\mathbb{G}_m^\dagger$ whose ghost sheaf is globally generated by the diagonal in $\NN^2 = \Gamma((z,\NN) \times B\mathbb{G}_m^\dagger,\overline{\mathcal{M}})$. Additionally, we note that this log stack may be identified with $B\mathbb{G}_m^\dagger$, although the morphism $(z,\NN) \times B\mathbb{G}_m^\dagger \rightarrow B\mathbb{G}_m^\dagger$ induced by inclusion of log structures is not the projection morphism after this identification. 

Now suppose $S$ is a fine and saturated log scheme, and $f:S \rightarrow \mathscr{P}(\tilde{X},r)$, $g: S \rightarrow B\mathbb{G}_m^\dagger$ are morphisms such that the two induced morphisms $S \rightarrow \mathscr{P}(X,r)$ are isomorphic. To check the diagram of interest is cartesian, since the morphism $B\mathbb{G}_m^\dagger \rightarrow B\mathbb{G}_m^\dagger$ in (\ref{di2}) is the identity, it suffices to verify that the morphism $f$ factors as $ig$. To do so, consider the fiber product $S' = S \times_{B\mathbb{G}_m^\dagger} ((z,\NN)\times B\mathbb{G}_m^\dagger)$. Since the diagonal morphism $\NN \rightarrow \NN^2$ is fine and saturated, we have $S' = S^{'sat}$. Moreover, the underlying stack of the fiber product is $S$.

By the universal property of the left square of (\ref{di2.2}), we have the morphism $S' = S^{'sat} \rightarrow \mathscr{P}(\tilde{X},r)$ factors through $(z,\NN) \times B\mathbb{G}_m^\dagger$, and thus factors through $B\mathbb{G}_m^\dagger$. Since $S' \rightarrow S$ is an epimorphism, we must have $f$ factors as $ig$, as desired. 
\end{proof}

Note that since the two morphisms $S \rightarrow \mathfrak{M}^{ev}(\mathcal{X},\tau,z)$ are isomorphic by assumption, the two induced morphisms $S \rightarrow \mathscr{P}(X,r)$ are isomorphic and factor through $B\mathbb{G}_m^\dagger \rightarrow \mathscr{P}(X,r)$. As shown above, we have any two lifts of $B\mathbb{G}_m^\dagger \rightarrow \mathscr{P}(X,r)$ to $B\mathbb{G}_m^\dagger \rightarrow \mathscr{P}(\tilde{X},r)$ are isomorphic, showing diagram (\ref{di1}) is cartesian.

To see that the obstruction theories are compatible, we note that these have explicit descriptions given in \cite{punc} Proposition $4.4$ as $R\pi_*(f^*T^{log}_{X}(-x_{out}))$ and $R\pi_*(\tilde{f}^*T^{log}_{\tilde{X}}(-x_{out}))$, for $f: \mathfrak{C} \rightarrow X$ and $\tilde{f}: \widetilde{\mathfrak{C}} \rightarrow \tilde{X}$ the universal curves over $\mathscr{M}(\tilde{X},\gamma,z')$ and $\mathscr{M}_{\pmb\tau}$ respectively. Observe that the partial stabilization map on universal curves $\tilde{\mathfrak{C}} \rightarrow \mathfrak{C}$ is an isomorphism around the section $x_{out}$. Indeed, we chose the tropical lift $\gamma$ so that $r \in \pmb\sigma(L_{out})$. In particular, the associated leg of $\tau$ is not subdivided. Thus, over a geometric point of $\mathscr{M}(X,\gamma,z')$, the stabilization map does not contract a component containing the punctured point $x_{out}$. Thus, $\mathcal{O}_\mathfrak{C}(-x_{out})$ pulls back to $\mathcal{O}_{\widetilde{\mathfrak{C}}}(-x_{out})$. Using this fact, together with the argument used to compare the obstruction theories in the proof of Theorem \ref{mthm1}, we see that the pullback of the obstruction theory on $\mathscr{M}(X,\tau,z)$ gives the obstruction theory on $\mathscr{M}(\tilde{X},\gamma,z')$. 

\end{proof}

Having shown diagram (\ref{di1}) is cartesian with compatible perfect obstruction theories, note that by \cite{vpull} Proposition $4.1$ and since the image of $\mathfrak{M}_{\gamma \rightarrow \tau,z',red}^{ev}$ in $\mathfrak{M}^{ev}_{\tau,z}$ is $\mathfrak{M}^{ev}_{\tau,z,red}$, we have: 

\begin{equation}\label{pushpoint}
\pi_*[\mathscr{M}(\tilde{X},\pmb\gamma,z')]^{vir} = m_{\gamma}deg(\mathfrak{M}_{\gamma\rightarrow \tau,z'}^{ev} \rightarrow \mathfrak{M}_{\tau,z}^{ev})\epsilon_{\pmb\tau,z}^!(\mathfrak{M}^{ev}_{\tau,z,red}).
\end{equation}

Note by construction, the virtual dimensions of both $\mathscr{M}(\tilde{X},\gamma,z')$ and $\mathscr{M}(X,\tau,z)$ are zero.

To compute this degree, we will first show the morphism $\mathfrak{M}^{ev}_{\gamma \rightarrow \tau,z'} \rightarrow \mathfrak{M}^{ev}_{\tau,z}$ is the following fine and saturated pullback:

\begin{lemma}
The following diagram is cartesian in the category of fine and saturated log stacks:

\[\begin{tikzcd}
\mathfrak{M}^{ev}_{\gamma \rightarrow \tau,z'} \arrow{r} \arrow{d} & \mathfrak{M}_{\gamma \rightarrow\tau} \arrow{d} \\
\mathfrak{M}^{ev}_{\tau,z} \arrow{r} & \mathfrak{M}_\tau
\end{tikzcd}\]

\end{lemma}

\begin{proof}
We prove this by arguing for the existence of a cartesian square beyond what we have seen, and combining this extra input with the cartesian squares we know. Note first that the following commutative diagram of algebraic log stacks is cartesian in all categories:

\[\begin{tikzcd}\label{di5}
\mathscr{P}(\tilde{X},r) \arrow{r}\arrow{d}& \mathscr{P}(\tilde{\mathcal{X}},r) \arrow{d}\\
\mathscr{P}(X,r) \arrow{r} & \mathscr{P}(\mathcal{X},r)
\end{tikzcd}.\]
Indeed, the bottom horizontal arrow is strict, so the underlying stack of the fiber product is $\tilde{X}\times B\mathbb{G}_m^\dagger$, and the log structure defining $\mathscr{P}(\tilde{X},r)$ is pulled back from the log structure defining $\mathscr{P}(\tilde{\mathcal{X}},r)$. Now consider the following commutative diagram:

\[\begin{tikzcd}
\mathfrak{M}^{ev}_{\gamma \rightarrow \tau,z'} \arrow{r}\arrow{d}& B\mathbb{G}_m^\dagger \arrow{r}\arrow{d} & B\mathbb{G}_m^\dagger\arrow{d}\\
\mathfrak{M}^{ev}_{\gamma \rightarrow \tau} \arrow{r}\arrow{d}& \mathscr{P}(\tilde{X},r) \arrow{r}\arrow{d}& \mathscr{P}(X,r)\arrow{d}\\
\mathfrak{M}_{\gamma \rightarrow \tau} \arrow{r}& \mathscr{P}(\tilde{\mathcal{X}},r) \arrow{r}& \mathscr{P}(\mathcal{X},r)
\end{tikzcd}\]

The top right square is the fine and saturated cartesian square of Diagram \ref{di2}, and the bottom right square is the cartesian Diagram \ref{di5}. The rest of the squares are cartesian either by construction or by the two out of three property for cartesian squares. Finally, we show the commuting square of interest is cartesian by considering the following commuting diagram.

\[\begin{tikzcd}
\mathfrak{M}^{ev}_{\gamma \rightarrow \tau,z'} \arrow{r}\arrow{d} & \mathfrak{M}^{ev}_{\tau,z} \arrow{r}\arrow{d} & B\mathbb{G}_m^\dagger\arrow{d}\\
\mathfrak{M}^{ev}_{\gamma \rightarrow \tau} \arrow{r} \arrow{d} & \mathfrak{M}^{ev}_{\tau} \arrow{r} \arrow{d} & \mathscr{P}(X,r)\arrow{d}\\
\mathfrak{M}_{\gamma \rightarrow \tau} \arrow{r} & \mathfrak{M}_{\tau} \arrow{r} & \mathscr{P}(\mathcal{X},r)
\end{tikzcd}\]

The bottom right square is cartesian by Lemma $3.7$ of \cite{int_mirror}. All other squares are cartesian as previously shown, or by the two out of three property for cartesian squares. 
\end{proof}

With this lemma in mind, consider a geometric generic point $Spec\text{ }\kappa \rightarrow \mathfrak{M}^{ev}_{\tau,z'}$. By the above lemma, in order to compute the fiber product $Spec\text{ }\kappa \times_{\mathfrak{M}^{ev}_{\tau,z'}} \mathfrak{M}^{ev}_{\gamma\rightarrow \tau,z}$, it suffices to compute the fine and saturated fiber product $Spec\text{ }\kappa \times^{fs}_{\mathfrak{M}_\tau} \mathfrak{M}_{\gamma \rightarrow \tau}$, where $Spec\text{ }\kappa$ is equipped with the pullback log structure from $\mathfrak{M}^{ev}_{\tau,z'}$. By the fact we picked a generic point and by Theorem \ref{mthm1}, it suffices to compute the fine and saturated pullback $Spec\text{ }\kappa \times^{fs}_{\mathcal{A}_\tau} \mathcal{A}_\gamma$. To compute this, note that $Spec\text{ }\kappa \rightarrow \mathcal{A}_\tau$ factors through the strict morphism $Spec\text{ }\kappa \rightarrow \mathcal{A}_{\NN}$. Recalling $\gamma \rightarrow \tau$ is a cone inclusion which includes the image of $\NN^\vee \rightarrow \tau$, we may use the characterization of morphisms to Artin cones given in Lemma \ref{mtoZA} to show the following diagram is cartesian in fine and saturated log stacks:

\[\begin{tikzcd}
\mathcal{A}_{\NN} \arrow{r} \arrow{d} & \mathcal{A}_{\gamma} \arrow{d}\\
\mathcal{A}_{\NN} \arrow{r} & \mathcal{A}_{\tau}
\end{tikzcd}\]

Thus, the fiber product we are interested in is $Spec \text{ }\kappa \times_{\mathcal{A}_\NN} \mathcal{A}_{\NN}$, with $Spec\text{ }\kappa \rightarrow \mathcal{A}_{\NN}$ having image in the deepest stratum of the Artin cone. By Lemma \ref{DM}, this stack is \'etale over $Spec\text{ }\kappa$ of degree $\frac{1}{coker(\NN \rightarrow \NN)} =  \frac{m_{\tau}}{m_{\gamma}}$. Hence:
\[deg(\mathfrak{M}^{ev}_{\gamma\rightarrow \tau,z'} \rightarrow \mathfrak{M}^{ev}_{\tau,z}) = \frac{m_{\tau}}{m_{\gamma}}.\]
By similar arguments to those given in Theorem \ref{decthm}, this time using the fact that proper morphisms are preserved under fine and saturated fiber products, the image of the morphism $\mathscr{M}(\tilde{X},\pmb\gamma,z') \rightarrow \mathscr{M}(X,\pmb\tau,z)$ is a union of connected components. Moreover, using equation \ref{pushpoint}, we find $\pi_*([\mathscr{M}(\tilde{X},\pmb\gamma,z')]^{vir}) = [\mathscr{M}_{\pmb\tau,z}]^{vir}$. Putting everything together, we have:

\[N_{\pmb\tau} = \frac{1}{|Aut(\tau)|} deg\text{ } [\mathscr{M}(X,\pmb\tau,z)]^{vir} =  deg\text{ } \sum_{\pmb\gamma \rightarrow \pmb\tau} \pi_*\frac{1}{|Aut(\gamma)|}[\mathscr{M}(\tilde{X},\pmb\gamma,z')]^{vir} =\sum_{\pmb\gamma \rightarrow \pmb\tau}N_{\pmb\gamma}.\]

\end{proof}

To conclude the proof of Corollary $\ref{mcr1}$, we observe the following chain of equalities:

\[
N_{p,q,r}^A = \sum_{\pmb\tau \vdash (\tau,A)} N_{\pmb\tau} = \sum_{\pmb\tau \vdash (\tau,A)} \sum_{\pmb\gamma \rightarrow \pmb\tau} N_{\pmb\gamma} = \sum_{\pi_*(A') =A} \sum_{\pmb\gamma \vdash (\gamma,A')} N_{\pmb\gamma} = \sum_{\pi_*(A') = A}  N_{p,q,r}^{A'}
\]

In the above display, the second equality follows from Proposition \ref{strcomp}. This gives our desired relation of structure constants of the mirror algebra, and hence completes the proof of Corollary \ref{mcr1}.

\nocite{*}
\bibliographystyle{amsalpha}
\bibliography{birinvar}

\newcommand{\etalchar}[1]{$^{#1}$}
\providecommand{\bysame}{\leavevmode\hbox to3em{\hrulefill}\thinspace}
\providecommand{\MR}{\relax\ifhmode\unskip\space\fi MR }
\providecommand{\MRhref}[2]{%
  \href{http://www.ams.org/mathscinet-getitem?mr=#1}{#2}
}
\providecommand{\href}[2]{#2}
\begin{thebibliography}{ACMW14}

\bibitem[ACGS20a]{decomp}
Dan Abramovich, Qile Chen, Mark Gross, and Bernd Siebert, \emph{Decomposition
  of degenerate {G}romov-{W}itten invariants}, Compos. Math. \textbf{156}
  (2020), no.~10, 2020--2075. \MR{4177284}

\bibitem[ACGS20b]{punc}
Dan Abramovich, Qile Chen, Mark Gross, and Bernd Siebert, \emph{Punctured
  logarithmic maps}, \href{https://arxiv.org/abs/2009.07720}{arXiv:2009.07720}
  (2020).

\bibitem[ACMW14]{bound}
Dan Abramovich, Qile Chen, Steffen Marcus, and Jonathan Wise, \emph{Boundedness
  of the space of stable logarithmic maps}, Journal of the European
  Mathematical Society, Volume 19, Issue 9 (2017). 2783-2809 (2014).

\bibitem[ACP15]{tropcurv}
Dan Abramovich, Lucia Caporaso, and Sam Payne, \emph{The tropicalization of the
  moduli space of curves}, Ann. Sci. \'{E}c. Norm. Sup\'{e}r. (4) \textbf{48}
  (2015), no.~4, 765--809. \MR{3377065}

\bibitem[AK00]{ssred}
D.~Abramovich and K.~Karu, \emph{Weak semistable reduction in characteristic
  0}, Invent. Math. \textbf{139} (2000), no.~2, 241--273. \MR{1738451}

\bibitem[AMW14]{compGW}
Dan Abramovich, Steffen Marcus, and Jonathan Wise, \emph{Comparison theorems
  for {G}romov-{W}itten invariants of smooth pairs and of degenerations}, Ann.
  Inst. Fourier \textbf{64} (2014), 1611--1667.

\bibitem[AW13]{bir_GW}
Dan Abramovich and Jonathan Wise, \emph{Birational invariance in logarithmic
  {G}romov-{W}itten theory}, Compos. Math. \textbf{154} (2013), 595--620.

\bibitem[BS22]{pushforward}
Younghan Bae and Johannes Schmitt, \emph{Chow rings of stacks of prestable
  curves i}, Forum of Mathematics, Sigma, \textbf{10} (2022).

\bibitem[CCUW20]{stacktrop}
Renzo Cavalieri, Melody Chan, Martin Ulirsch, and Jonathan Wise, \emph{A moduli
  stack of tropical curves}, Forum of Mathematics, Sigma \textbf{8} (2020).

\bibitem[Cos06]{Cos}
Kevin Costello, \emph{Higher genus {G}romov-{W}itten invariants as genus zero
  invariants of symmetric products}, Ann. of Math. (2) \textbf{164} (2006),
  no.~2, 561--601. \MR{2247968}

\bibitem[GS13]{LogGW}
Mark Gross and Bernd Siebert, \emph{Logarithmic {G}romov-{W}itten invariants},
  J. Amer. Math. Soc. \textbf{26} (2013), no.~2, 451--510. \MR{3011419}

\bibitem[GS19]{int_mirror}
Mark Gross and Bernd Siebert, \emph{Intrinsic mirror symmetry},
  \href{https://arxiv.org/pdf/1909.07649.pdf}{arXiv:1909.07649} (2019).

\bibitem[GS21]{scatt}
\bysame, \emph{The canonical wall structure and intrinsic mirror symmetry},
  \href{https://arxiv.org/pdf/2105.02502.pdf}{arXiv:2105.02502} (2021).

\bibitem[HMP{\etalchar{+}}22]{logDR}
D.~Holmes, S.~Molcho, R.~Pandharipande, A.~Pixton, and J.~Schmitt,
  \emph{Logarithmic double ramification cycles},
  \href{https://arxiv.org/pdf/2207.06778.pdf}{arXiv:2207.06778} (2022).

\bibitem[HW21]{pusherr}
Leo Herr and Jonathan Wise, \emph{Costello's pushforward formula: errata and
  generalization},
  \href{https://arxiv.org/pdf/2103.10348.pdf}{arXiv:2103.10348} (2021).

\bibitem[Man08]{vpull}
Cristina Manolache, \emph{Virtual pull-backs}, J. Algebraic Geom. \textbf{21
  (2012), no. 2} (2008), 201--245.

\bibitem[Mol21]{wss}
Sam Molcho, \emph{{U}niversal {S}tacky {W}eak {S}emistable {R}eduction}, Israel
  Journal of Math \textbf{242} (2021), no.~1, 55--82.

\bibitem[Ogu18]{Og}
Arthur Ogus, \emph{Lectures on logarithmic algebraic geometry}, Cambridge
  University Press, 2018.

\bibitem[Ran19]{expand}
Dhruv Ranganathan, \emph{Logarithmic {G}romov-{W}itten theory with expansions},
  \href{https://arxiv.org/pdf/1903.09006.pdf}{arXiv:1903.09006} (2019).

\bibitem[{Sta}]{SP}
The {Stacks Project Authors}, \emph{\textit{Stacks Project}},
  \url{https://stacks.math.columbia.edu}.

\bibitem[Tsu19]{satmor}
Takeshi Tsuji, \emph{Saturated morphisms of logarithmic schemes}, Tunis. J.
  Math. \textbf{1} (2019), no.~2, 185--220. \MR{3907739}

\bibitem[Wis11]{wiseob}
Jonathan Wise, \emph{Obstruction theories and virtual fundamental classes},
  \href{https://arxiv.org/abs/1111.4200}{arXiv:1111.4200} (2011).

\bibitem[Wis16]{modmor}
\bysame, \emph{Moduli of morphisms of logarithmic schemes}, Algebra Number
  Theory \textbf{10} (2016), 695--735.

\bibitem[Wu21]{toricglue}
Yixian Wu, \emph{Splitting of {G}romov-{W}itten invariants with toric gluing
  strata}, \href{https://arxiv.org/pdf/2103.14780.pdf}{arXiv:2103.14780}
  (2021).

\end{thebibliography}

\end{document}